\newtheorem{theorem}{Theorem}[section]
\theoremstyle{definition}
\newtheorem{definition}[theorem]{Definition}
\newtheorem{proposition}[theorem]{Proposition}
\newtheorem{corollary}[theorem]{Corollary}
\theoremstyle{remark}
\numberwithin{equation}{section}
\newcommand{\FF}{\mathbb{F}}
\newcommand{\RR}{\mathbb{R}}
\newcommand{\CC}{\mathbb{C}}
\newcommand{\PP}{\mathbb{P}}
\newcommand{\tp }{{\scriptscriptstyle\mathsf{T}}}
\let\O\undefine
\DeclareMathOperator{\codim}{codim}
\DeclareMathOperator{\Sub}{Sub}
\DeclareMathOperator{\im}{im}
\DeclareMathOperator{\GL}{GL}
\DeclareMathOperator{\Seg}{Seg}
\DeclareMathOperator{\Gr}{Gr}
\DeclareMathOperator{\St}{St}
\DeclareMathOperator{\O}{O}
\DeclareMathOperator{\U}{U}
\DeclareMathOperator{\rank}{rank}
\DeclareMathOperator{\mrank}{\mu rank}
\DeclareMathOperator{\brank}{\overline{\rank}}
\DeclareMathOperator{\srank}{\rank_\mathsf{S}}
\DeclareMathOperator{\smrank}{\mrank_\mathsf{S}}
\DeclareMathOperator{\sbrank}{\overline{\rank}_\mathsf{S}}
\begin{document}
\title{Topology of tensor ranks}
\author{Pierre Comon}
\address{GIPSA-Lab CNRS, F-38402 St Martin d'H{\`e}res Cedex, France}
\email{pierre.comon@gipsa-lab.grenoble-inp.fr}
\author{Lek-Heng Lim}
\address{Computational and Applied Mathematics Initiative, Department of Statistics, 
University of Chicago, Chicago,  IL 60637, USA}
\email{lekheng@galton.uchicago.edu}
\author{Yang Qi}
\address{Computational and Applied Mathematics Initiative, Department of Statistics, 
University of Chicago, Chicago,  IL 60637, USA}
\email[corresponding author]{yangqi@galton.uchicago.edu}
\author{Ke Ye}
\address{KLMM, Academy of Mathematics and Systems Science, Chinese Academy of Sciences, Beijing 100190, China}
\email{keyk@amss.ac.cn}

\begin{abstract}
We study path-connectedness and homotopy groups of sets of tensors defined by tensor rank,  border rank, multilinear rank, as well as their symmetric counterparts for symmetric tensors. We show that over $\mathbb{C}$, the set of rank-$r$ tensors and the set of symmetric rank-$r$ symmetric tensors are both path-connected if $r$ is not more than the complex generic rank; these results also extend to border rank and symmetric border rank over $\mathbb{C}$. Over $\mathbb{R}$, the set of rank-$r$ tensors is path-connected if it has the expected dimension but the corresponding result for symmetric rank-$r$ symmetric $d$-tensors depends on the order $d$: connected when $d$ is odd but not when $d$ is even. Border rank and symmetric border rank over $\mathbb{R}$ have essentially the same path-connectedness properties as rank and symmetric rank over $\mathbb{R}$. When $r$ is greater than the complex generic rank, we are unable to discern any general pattern: For example, we show that border-rank-three tensors in $\mathbb{R}^2 \otimes \mathbb{R}^2 \otimes \mathbb{R}^2$ fall into four connected components. For multilinear rank, the manifold of  $d$-tensors of multilinear rank $(r_1,\dots,r_d)$ in $\mathbb{C}^{n_1} \otimes \cdots \otimes \mathbb{C}^{n_d}$ is always path-connected, and the same is true in $\mathbb{R}^{n_1} \otimes \cdots \otimes \mathbb{R}^{n_d}$  unless $n_i = r_i = \prod_{j \ne i} r_j$ for some $i\in\{1, \dots, d\}$. Beyond path-connectedness, we determine, over both $\mathbb{R}$ and $\mathbb{C}$, the fundamental and higher homotopy groups of the set of tensors of a fixed small rank, and, taking advantage of Bott periodicity,  those of the manifold of tensors of a fixed multilinear rank. We also obtain analogues of these results for symmetric tensors of a fixed symmetric rank or a fixed symmetric multilinear rank.
\end{abstract}

\keywords{Tensor rank, border rank, symmetric rank, multilinear rank, symmetric multilinear rank, path-connectedness, fundamental group, higher homotopy groups, Bott periodicity}

\subjclass[2010]{15A69, 54D05, 55Q05}

\maketitle

\section{Introduction}

Let $V_1,\dots,V_d$ be vector spaces over $\mathbb{F} = \mathbb{R}$ or $\mathbb{C}$ and let $\mathbb{N}_0 \coloneqq \{0,1,2,\ldots \} =\mathbb{N} \cup \{0\}$ denote the set of nonnegative integers. For a $d$-tensor $A \in V_1\otimes\dots \otimes V_d$, its \emph{tensor rank} \cite{Hi1, DSL, L} is
\begin{equation}\label{eq:trank}
\rank(A) \coloneqq \min \Bigl\{ r \in \mathbb{N}_{0} : A = \sum\nolimits_{i=1}^r v_{1,i} \otimes \dots \otimes v_{d,i}, \; v_{j,i} \in V_j \Bigr\},
\end{equation}
and its \emph{multilinear rank} \cite{Hi1, DSL, L} is the $d$-tuple
\begin{equation}\label{eq:mrank}
\mrank(A) \coloneqq \min \bigl\{(r_1,\dots, r_d) \in \mathbb{N}_{0}^d: A\in W_1\otimes \dots \otimes W_d, \; W_j \subseteq V_j, \; \dim_\FF( W_j)  = r_j \bigr\},
\end{equation}
well-defined since the set on the right is a directed subset of $\mathbb{N}_{0}^d$. 
When $d=2$, the multilinear rank in \eqref{eq:mrank} reduces to row and column ranks of a matrix, which are of course equal to each other and to \eqref{eq:trank}, the minimal number of rank-one summands required to decompose the matrix. Thus \eqref{eq:mrank} and \eqref{eq:trank} are both generalizations of matrix rank although for $d \ge 3$, these numbers are in general all distinct.

For a symmetric $d$-tensor $A \in \mathsf{S}^d(V)$, there is also a corresponding notion of \emph{symmetric tensor rank} \cite{CGLM, L}, given by
\begin{equation}\label{eq:srank}
\srank(A) \coloneqq \min \Bigl\{ r \in \mathbb{N}_{0} : A = \sum\nolimits_{i=1}^r v_{i}^{\otimes d}, \; v_{i} \in V \Bigr\},
\end{equation}
and \emph{symmetric multilinear rank}, given by
\begin{equation}\label{eq:smrank}
\smrank(A) \coloneqq \min \bigl\{ r \in \mathbb{N}_{0} : A\in \mathsf{S}^d(W), \; W \subseteq V, \; \dim_\FF( W)  = r \bigr\}.
\end{equation}
It is now known that $\rank(A) \ne \srank(A)$ in general \cite{Shitov} although it is easy to see that one always has  $\mrank(A) = (r,\dots,r)$ where $r= \smrank(A)$.

When $d \ge 3$, the sets $\{ A \in V_1\otimes\dots \otimes V_d : \rank(A) \le r\}$ and $\{A \in \mathsf{S}^d(V) : \srank(A) \le r\}$ are in general not closed (whether in the Euclidean or Zariski topology) \cite{L}, giving rise to the notions of \emph{border rank} and \emph{symmetric border rank}
\begin{align}
\brank(A) &\coloneqq \min \bigl\{ r \in \mathbb{N}_{0} : A \in \overline{\{ B \in V_1\otimes\dots \otimes V_d : \rank(B) \le r\}} \bigr\}, \label{eq:brank}\\
\sbrank(A) &\coloneqq \min \bigl\{ r \in \mathbb{N}_{0} : A \in \overline{\{ B\in \mathsf{S}^d(V) : \srank(B) \le r\}} \bigr\}.\label{eq:sbrank}
\end{align}
The closures here are in the Euclidean topology. Although over $\CC$, the Euclidean and  Zariski topologies give the same closure for these sets \cite[Theorem~2.33]{Mumford}.
This `border rank' phenomenon does not happen with multilinear rank and symmetric multilinear rank.

In this article we will study (i) path-connectedness, (ii) fundamental group, and (iii) higher homotopy groups of the sets:
\begin{alignat*}{6}
&\text{\ding{192}} &\;\;&\{ A \in V_1\otimes\dots \otimes V_d : \rank(A) = r\},&&\text{\ding{193}} &\;\;&\{A \in \mathsf{S}^d(V) : \srank(A) = r\},\\
&\text{\ding{194}} &&\{ A \in V_1\otimes\dots \otimes V_d : \brank(A) = r\},&&\text{\ding{195}} && \{A \in \mathsf{S}^d(V) : \sbrank(A) = r\},\\
&\text{\ding{196}} &&\{ A \in V_1\otimes\dots \otimes V_d : \mrank(A) = (r_1,\dots, r_d)\},&\qquad&\text{\ding{197}} & &\{A \in \mathsf{S}^d(V) : \smrank(A) = r\},
\end{alignat*}
for arbitrary $d \ge 3$ and for a vast range of  (although not all) values of $r$ and $(r_1,\dots,r_d)$. These topological properties will  in general depend on whether the vector spaces involved are over $\RR$ or $\CC$ and the two cases will often require different treatments. \ding{192} and \ding{193} are  semialgebraic sets; \ding{194} and \ding{195} are locally closed semialgebraic sets; \ding{196} and \ding{197} are smooth manifolds. One common feature of \ding{192}--\ding{197} is that they all contain a nonempty Euclidean open subset of their closures, implying that each of these sets has the same dimension as its closure.

Throughout this article, `rank-$r$' will mean `rank exactly $r$' and likewise for `border-rank $r$,' `symmetric rank-$r$,' `multilinear rank-$(r_1,\dots,r_d)$,' etc. Statements such as `path-connectedness of border rank' or `homotopy groups of symmetric multilinear rank' will be understood to mean (respectively) path-connectedness of the set in \ding{194} or  homotopy groups of the set in \ding{197}.

\subsection*{Outline} Our results for the three topological properties of the six notions of tensor ranks over two base fields are too lengthy to reproduce in the introduction. Instead we provide Table~\ref{tab:main} to serve as a road map to these results. As is evident, one notable omission is the homotopy groups of border ranks, which accounts for the empty cells in the table. The reason is that the  approaches we used to obtain homotopy groups for ranks do not directly apply to border ranks  (e.g., Proposition~\ref{prop:ridenfungp}  does not have a counterpart for  border rank) because of the more subtle geometry of border ranks and at this point we are unable to go beyond path-connectedness for border ranks.

\begin{table}[ht]
\centering
\caption{Road map to results.}
\footnotesize
    \begin{tabular}{l|l|l|l}
     & Connectedness & Fundamental group & Higher homotopy \\ \hline\hline
     $X$-rank over $\CC$ & Thm~\ref{thm:cpxrkrcon} & Prop~\ref{prop:ridenfungp} & Prop~\ref{prop:ridenfungp} \\ \hline
    border $X$-rank over $\CC$ & Thm~\ref{thm:cpxbrkrcon} &  &  \\ \hline
    rank over $\CC$ & Cor~\ref{cor:cmplexbrktensorcon} & Thm~\ref{thm: fundamental group complex} & Thm~\ref{prop:vaninshig higher homotopy of rank one complex}, Thm~\ref{prop:vanishing higher homotopy group of complex identifiable rank 2} \\ \hline
    rank over $\RR$ & Thm~\ref{thm:rerkrconn}, Cor~\ref{cor: real brk rk conn nondef} & Thm~\ref{thm: fundamental group real} & Thm~\ref{prop:vanishing homotopy group of rank one real tensors}, Thm~\ref{prop:vanishing homotopy group of real identifiable rank two tensors} \\ \hline
    border rank over $\CC$ & Cor~\ref{cor:conncomplxbork} &  &  \\ \hline
    border rank over $\RR$ & Thm~\ref{thm:rerkrconn}, Cor~\ref{cor: real brk rk conn nondef} &  &  \\ \hline
    symmetric rank over $\CC$ & Cor~\ref{cor:cmplexbrktensorcon} & Thm~\ref{thm: fundamental group complex symmetric1} & Thm~\ref{prop:vanishing homotopy group of symmetric rank one complex tensors}, Thm~\ref{prop:vanishing of higher homotopy group of complex symmetric tensors} \\ \hline
    symmetric rank over $\RR$ & Thm~\ref{thm:real symmetric rank rank} & Thm~\ref{thm: fundamental group real symmetric1} & Thm~\ref{prop:homotopy group of symmetric rank one real tensors}, Thm~\ref{prop:vanishing homotopy group of symmetric rank three identifiable real tensors} \\ \hline
    symmetric border rank over $\CC$ & Cor~\ref{cor:conncomplxsymbrk}&   &   \\ \hline
    symmetric border rank over $\RR$ & Thm~\ref{thm:real symmetric border rank rank} &   &   \\ \hline
    multilinear rank over $\CC$ & Thm~\ref{thm:multilncomplxrkconn} & Thm~\ref{thm:homotopygpcomplxmultilnrk} & Thm~\ref{thm:homotopygpcomplxmultilnrk} \\ \hline
    multilinear rank over $\RR$ & Thm~\ref{thm:multilinrkrealconn2} & Thm~\ref{thm:homotopygprealmultilnrk} & Thm~\ref{thm:homotopygprealmultilnrk} \\ \hline
    symmetric multilinear rank over $\CC$ & Thm~\ref{thm:symultilncplxrkcon} & Thm~\ref{thm:homotopygpcomplxsymmultilnrk} & Thm~\ref{thm:homotopygpcomplxsymmultilnrk} \\ \hline
    symmetric multilinear rank over $\RR$ & Thm~\ref{thm:symmultilinrkrlpcon} & Thm~\ref{thm:homotopygprealsymmultilnrk} & Thm~\ref{thm:homotopygprealsymmultilnrk}
    \end{tabular}
\label{tab:main}
\end{table}

\subsection*{Coordinates} All notions of rank in this article, and in particular the tensor ranks \eqref{eq:trank}--\eqref{eq:sbrank}, are independent of bases, i.e., they are indeed defined on the respective tensor spaces --- $V_1 \otimes\dots \otimes V_d$ or $\mathsf{S}^d(V)$ where $V_1,\dots, V_d$ and $V$ are $\FF$-vector spaces. We will therefore state our results in this article in a coordinate-free manner. Nevertheless some practitioners tend to view tensors in terms of \emph{hypermatrices}, i.e., $d$-dimensional matrices that are coordinate representations of tensors with respect to some choices of bases. These are usually denoted
\[
\FF^{n_1 \times \dots \times n_d} \coloneqq \{ (a_{i_1 \cdots i_d}) :  a_{i_1 \cdots i_d} \in \FF, \; 1 \le k_1 \le n_k, \; k =1,\dots, d \}.
\]
All results in this article may be applied to hypermatrices by choosing bases and setting $V_1 = \FF^{n_1}, \dots, V_d = \FF^{n_d}$, with $n_i = \dim_\FF (V_i)$, and identifying tensors with hypermatrices:
\[
\FF^{n_1} \otimes \dots \otimes \FF^{n_d} = \FF^{n_1 \times \dots \times n_d},
\]
or  symmetric tensors with symmetric hypermatrices
\[
\mathsf{S}^d(\FF^n) = \{  (a_{i_1 \cdots i_d})  \in \FF^{n \times \dots \times n} : a_{i_{\sigma(1)} \cdots i_{\sigma(d)}} = a_{i_1 \cdots i_d}\; \text{for all}\; \sigma \in \mathfrak{S}_d \}.
\]
Note that when we said the sets \ding{192}--\ding{197}  have semialgebraic, locally closed, or manifold structures, these statements are coordinate independent.

\subsection*{Application impetus}

The primary goal of this article is to better understand the topological properties of various tensor ranks, an aspect that has been somewhat neglected in existing studies. However, the results on path-connectedness and simple-connectedness of tensor rank, multilinear rank, and their symmetric counterparts have useful practical implications.

One of the most basic and common problems involving tensors  in applications is to find low-rank approximations \cite{DSL} with respect to one of these notions of rank: Given $A \in V_1 \otimes \dots \otimes V_d$ and $r \in \mathbb{N}$ or $(r_1,\dots,r_d) \in \mathbb{N}^d$, find a best rank-$r$ or best multilinear rank-$(r_1,\dots,r_d)$ approximation:
\[
\inf\nolimits_{\rank(B) \le r } \lVert A - B \rVert \quad \text{or}\quad \inf\nolimits_{\mrank(B) \le (r_1,\dots,r_d) } \lVert A - B \rVert;
\]
or, given $A \in \mathsf{S}^d(V)$ and $r \in \mathbb{N}$, find the best symmetric rank-$r$ approximation or  best symmetric multilinear rank-$r$ approximation:
\[
\inf\nolimits_{\rank_{\mathsf{S}}(B) \le r} \lVert A - B\rVert \quad \text{or}\quad \inf\nolimits_{\smrank(B) \le r} \lVert A - B \rVert.
\]

\emph{Riemannian manifold optimization techniques} \cite{EAS, AMS} were first used for the best multilinear rank approximations of  tensors and symmetric tensors in \cite{ES, SL}. Numerous variants have appeared since, mostly dealing with different objective functions, e.g., for the  so-called `tensor completion' problems. In one of these works \cite{KSV}, the authors considered approximation by tensors of a \emph{fixed} multilinear  rank, i.e.,
\[
X_{r_1,\dots,r_d} (V_1,\dots, V_d) \coloneqq \{ A \in V_1 \otimes \dots \otimes V_d : \mrank(A) = (r_1,\dots,r_d) \},
\]
as opposed to those \emph{not more than} a fixed multilinear rank, i.e.,
\[
\Sub_{r_1,\dots,r_d} (V_1,\dots, V_d)  \coloneqq \{ A \in V_1 \otimes \dots \otimes V_d : \mrank(A) \le (r_1,\dots,r_d) \}.
\]
The advantages of using $\Sub_{r_1,\dots,r_d}(V_1,\dots, V_d) $, called a \emph{subspace variety}, are well-known: The set is topologically well-behaved, e.g., closed in the Euclidean (and Zariski) topology and therefore guaranteeing the existence of a best approximation \cite{DSL}; connected in the Euclidean (and Zariski) topology and therefore ensuring that path-following optimization methods that start from any initial point could in principle arrive at the optimizer \cite{L}. However $\Sub_{r_1,\dots,r_d}(V_1,\dots, V_d) $  suffers from one defect --- it is not a smooth manifold, e.g., any point  in $\Sub_{r_1,\dots,r_d}(V_1,\dots, V_d) $ with multilinear rank strictly less than $(r_1,\dots,r_d)$ is singular, and this prevents the use of Riemannian optimization techniques. With this in mind, the authors of \cite{KSV} formulated their optimization problem over $X_{r_1,\dots,r_d}(V_1,\dots, V_d) $, which is a smooth Riemannian manifold \cite{UschmajewVandereycken13}. But this raises the question of whether $X_{r_1,\dots,r_d}(V_1,\dots, V_d) $ is path-connected. If not, then the path-following algorithms in \cite{KSV} that begin from an initial point in one component will never converge to an optimizer located in another. For example, when $d=2$, it is well-known that the set of $n \times n$ real matrices of rank $n$ has two components given by the sign of the determinant but that the set of $n \times n$ complex matrices of rank $n$ is connected. More generally,   the set of $n_1\times n_2$ real matrices of rank $r$ is connected unless $n_1 = n_2 = r$  \cite{Lee,Warner}. 

Our results  on the path-connectedness of sets of $d$-tensors of various ranks over both $\mathbb{R}$ and $\mathbb{C}$ would thus provide theoretical guarantees for Riemannian optimization algorithms.

\emph{Homotopy continuation techniques} \cite{AG} have also made a recent appearance \cite{HOOS2014} in tensor decomposition problems over $\mathbb{C}$. In general, a tensor of a given rank may have several rank decompositions  and such techniques have the advantage of being able to find all decompositions with high probability. The basic idea is that for a given general complex rank-$r$ tensor $A \in W_1 \otimes \dots \otimes W_d$ with a known rank-$r$ decomposition, one may construct a random loop $\tau \colon [0,1] \to W_1 \otimes \dots \otimes W_d$ with $\tau(0) = \tau(1) = A$, the endpoint of this loop gives a rank-$r$ decomposition of $A$, repeat this process a considerable number of times by choosing random loops, and one may expect to obtain all rank-$r$ decompositions. The consideration of loops naturally leads us to questions of simple-connectedness.

Our results on the  simple-connectedness of sets of $d$-tensors of various ranks over $\mathbb{C}$  would thus provide  theoretical guarantees for homotopy continuation techniques.


\section{$X$-rank, tensor rank, symmetric rank, and border rank}\label{sec:conn rk r}

Our results in this section are relatively straightforward to state but their proofs will be technical and require an algebraic geometric view of tensor rank. We start by providing some relevant background in Section~\ref{sec:back}. Even those already conversant with the standard treatment of these materials may nevertheless benefit from going over Section~\ref{sec:back} because of the subtleties that arise when one switches between $\mathbb{R}$ and $\mathbb{C}$. The standard treatment, say as in \cite{Harris92, L}, invariably assumes that everything is carried out over $\mathbb{C}$.

\subsection{Rank and border rank}\label{sec:back}

Let $V$ be a finite-dimensional real vector space, and $W = V \otimes_{\RR} \CC$ be its complexification. Let $\PP W$ be the corresponding projective space\footnote{We will also write  $\mathbb{RP}^n$ and  $\mathbb{CP}^n$ for $\PP(\RR^n)$ and $\PP(\CC^n)$ respectively.} with quotient map
\begin{equation}\label{eq:proj}
p \colon W \setminus \{0\} \to \PP W, \qquad v \mapsto [v],
\end{equation}
where $[v]$ denotes the projective equivalence class of $v \in W \setminus \{0\}$. For any subset $X \subseteq \PP W$, the \emph{affine cone} over $X$ is the set $\widehat{X} \coloneqq p^{-1}(X) \cup \{0\}$. Note that $\widehat{X}  \subseteq W$. A complex projective variety $X \subseteq \PP W$ is called \emph{nondegenerate} if $X$ is not contained in any hyperplane, and $X$ is called \emph{irreducible} if it is not a union of two proper subvarieties. If $X$ is defined by homogeneous polynomials with real coefficients, then $X(\mathbb{R})$, the set of \emph{real points} of $X$, is the zero locus of these polynomials in $\PP V$. In fact, $X(\RR) = X \cap \PP V$. If $X \subseteq \PP W$ is an irreducible nondegenerate projective variety defined by real homogeneous polynomials, then $X(\RR)$ is Zariski dense in $X$  if and only if $X$ has a nonsingular real point \cite{BlekTeit15, Sottilearxiv16}.

Let  $s_r(X) \coloneqq \im (s_r)$ be the image of the morphism\footnote{As usual, throughout this article, a set raised to a power $r$ denotes the set theoretic product of $r$ copies of the set. So
$(\widehat{X} \setminus \{0\} )^r \coloneqq \widehat{X} \setminus \{0\} \times \dots \times \widehat{X} \setminus \{0\}$ ($r$ copies).}
\begin{equation}\label{eq:defSigma}
s_r \colon (\widehat{X} \setminus \{0\} )^r  \to W, \quad
(x_1, \dots, x_r) \mapsto x_1 + \dots + x_r.
\end{equation}
The $r$th  \emph{secant variety}  $\sigma_r(X)$ is the projective subvariety whose affine cone is the Zariski closure of $s_r(X)$. Henceforth we will write $\overline{s}_r(X) \coloneqq \overline{s_r(X)}$ for the Euclidean closure of $s_r(X)$ and $\widehat{\sigma}_r (X) \coloneqq \widehat{\sigma_r (X)}$ for the affine cone of $\sigma_r (X)$. For a complex irreducible projective variety $X$,
\[
\overline{s}_r(X) = \widehat{\sigma}_r (X).
\]
Let $x \in W$. We say that $x$ has \emph{$X$-rank} $r$  if $x \in s_r(X) \setminus s_{r-1}(X)$; in notation, $\rank_{X} (x) = r$. We say that $x$ has \emph{$X$-border rank} $r$ if $x \in \overline{s}_r(X)\setminus \overline{s}_{r-1}(X)$; in notation, $\overline{\rank}_{X} (x) = r$. In summary,
\begin{align*}
s_r(X) &= \{ x \in W : \rank_X(x) \le r \}, & s_r(X) \setminus s_{r-1}(X) &=  \{ x \in W : \rank_X(x) =r \}, \\
\overline{s}_r(X) &= \{ x \in W : \brank_X(x) \le r \}, & \overline{s}_r(X) \setminus \overline{s}_{r-1}(X) &=  \{ x \in W : \brank_X(x) =r \}.
\end{align*} 

Let $\mathscr{A}(\PP W)$ denote the set of all complex projective varieties $X \subseteq \PP W$ that are (i) irreducible, (ii) nondegenerate, (iii)  defined by real homogeneous polynomials, and (iv) whose real points $X(\RR)$ are Zariski dense. Given $X \in \mathscr{A} (\PP W)$, consider the real analogue of the map  in \eqref{eq:defSigma},
\[
s_r \colon (\widehat{X}(\RR) \setminus \{0\})^{r} \to V, \quad
(x_1, \dots, x_r) \mapsto x_1 + \dots + x_r,
\]
also denoted $s_r$ by a slight abuse of notation. It follows from \cite{QCL16, BJ17:dga} that
\[
\sigma_r (X(\RR)) = \bigl(\sigma_r(X)\bigr)(\RR).
\]
Thus if $X \in \mathscr{A} (\PP W)$, then $\sigma_r (X) \in \mathscr{A} (\PP W)$. However, $\overline{s}_r(X(\RR))$ may not be equal to $\widehat{\sigma}_r (X(\RR))$.

An important point to note is that the values of $X$-rank and border $X$-rank depend on the choice of base field. For $x \in V$, it is entirely possible \cite{CGLM, DSL, HLA} that
\[
\rank_{X} (x)\ne \rank_{X(\RR)} (x) \qquad \text{or} \qquad \overline{\rank}_{X} (x) \ne \overline{\rank}_{X(\RR)} (x).
\]
As such we will have to treat the real and complex cases separately.

The smallest $r$ so that $\overline{s}_r (X) = W$, or equivalently, $\sigma_r (X) = \PP W$, is called \emph{complex generic $X$-rank}, and is denoted by $r_g (X)$. Note that the notion of generic rank is only defined over $\CC$. If $s_r (X(\RR)) \setminus s_{r-1} (X(\RR))$ contains a Euclidean open subset of $V$, then $r$ is called a \emph{typical $X$-rank}. Note that the notion of typical rank is only defined over $\RR$.
The two notions are related in that the complex generic $X$-rank $r_g(X)$ is the smallest typical $X$-rank  \cite{BlekTeit15}.

\subsection{Secant, Segre, and Veronese varieties}

Our discussions will be framed in terms an arbitrary variety $X \in \mathscr{A}(\PP W)$ for greatest generality. However, when we apply these results to tensor rank, the variety in question is the \emph{Segre variety} $X = \Seg(\PP W_1 \times \dots \times \PP W_d)$, the manifold of projective equivalence classes of rank-one $d$-tensors, where each $W_i$ is the complexification of some real vector space $V_i$, with $W = W_1 \otimes \dots \otimes W_d$ and $V = V_1 \otimes \dots \otimes V_d$. In this case, $X(\RR) = \Seg(\PP V_1 \times \dots \times \PP V_d)$, which is Zariski dense in $X= \Seg(\PP W_1 \times \dots \times \PP W_d)$. Similarly, when we apply these results to symmetric tensor rank, the  variety in question is the \emph{Veronese variety} $X = \nu_d (\PP U)$, the manifold of projective equivalence classes of symmetric rank-one $d$-tensors, where $U$ is the complexification of some real vector space $T$, with $W = \mathsf{S}^d(U)$ and $V = \mathsf{S}^d(T)$. In this case, $X(\RR) = \nu_d (\PP T)$, which is Zariski dense in $X = \nu_d (\PP U)$.

When $X = \Seg(\PP W_1 \times \dots \times \PP W_d)$, we write
\[
\rank(A) = \rank_{\Seg(\PP W_1 \times \dots \times \PP W_d)}(A)\quad\text{and}\quad \brank(A) = \brank_{\Seg(\PP W_1 \times \dots \times \PP W_d)}(A)
\]
for the tensor rank and border rank of a tensor $A \in W_1 \otimes \dots \otimes W_d$. 
When $X = \nu_d (\PP U)$, we write
\[
\srank(A) = \rank_{\nu_d (\PP U)}(A)\quad\text{and}\quad \sbrank(A) = \brank_{\nu_d (\PP U)}(A)
\]
for the symmetric tensor rank and symmetric border rank of a symmetric tensor $A \in \mathsf{S}^d(U)$.

Note that if, say, $W_1$ is one-dimensional, then $W_1 \otimes W_2 \otimes \dots  \otimes W_d \cong W_2 \otimes \dots \otimes W_d$. So  for  $W_1  \otimes \dots  \otimes W_d $ to be faithfully a space of order-$d$ tensors, the  dimensions of $W_1,\dots,W_d$ must all be at least two. Throughout this article, we will assume that all vector spaces that appear in tensor product spaces such as $W_1\otimes \dots \otimes W_d$ or $\mathsf{S}^d(U)$ are of dimensions at least two. The same assumption will apply to real vector spaces as well for the same reason.

\section{Path-connectedness of complex tensor ranks}\label{sec:cplxrank}

We start by establishing the path-connectedness of border $X$-rank over $\mathbb{C}$, which is a straightforward consequence of the following fact \cite{Adlandsvik87}:
For any complex irreducible nondegenerate projective variety $X \subsetneq \PP W$, we have a strict inclusion $\sigma_{r-1}(X) \subsetneq \sigma_r(X)$ whenever $r \le r_g(X)$. By \cite[Corollary~4.16]{Mumford}, $\sigma_r (X) \setminus \sigma_{r-1} (X)$ is path-connected. Given any nonempty subset $S \subseteq \PP W$, let
\begin{equation}\label{eq: nonzerotautological}
\mathcal{O}_{S}^{\circ} (-1) = \{(x, v) \in \PP W \times W \colon x \in S, \; v \in \widehat{x} \setminus \{0\}\}
\end{equation}
be a fiber bundle over $S$. Note that this differs from the tautological line bundle $\mathcal{O}_{S} (-1)$ in that the fiber at $x \in S$ is $\widehat{x} \setminus \{0\}$ instead of $\widehat{x}$. Let $p_1 \colon \mathcal{O}_{S}^{\circ} (-1) \to \PP W$  and $p_2 \colon \mathcal{O}_{S}^{\circ} (-1) \to W$ be  the projections onto the first and second factor respectively. For any $x \in S$, the fiber $p_1^{-1} (x) = \widehat{x} \setminus \{0\} \cong \CC \setminus \{0\}$ is path-connected. So if $S$ is path-connected, $p_1^{-1} (S)$ is path-connected, which implies $p_2 (p_1^{-1} (S))$ is path-connected. In our case, $S = \sigma_r (X) \setminus \sigma_{r-1} (X)$. Hence $p_2 (p_1^{-1} (S)) = \widehat{\sigma}_r (X) \setminus \widehat{\sigma}_{r-1} (X)$ is path-connected, or, in other words, the set of border $X$-rank-$r$ points,
\[
\{ x \in W : \brank_X (x) = r\} = \widehat{\sigma}_r (X) \setminus \widehat{\sigma}_{r-1} (X),
\]
is  path-connected. We state this formally below.
\begin{theorem}[Connectedness of $X$-border rank-$r$ points]\label{thm:cpxbrkrcon}
Let $W$ be a complex vector space and $X \subsetneq \PP W$ be any complex irreducible nondegenerate projective variety.
 If $r \le r_g(X)$, then the set $\{ x \in W : \brank_X (x) = r\}$  is a path-connected set.
\end{theorem}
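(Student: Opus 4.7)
My plan is to reduce the statement to a path-connectedness statement on the projective side and then lift it to the affine cone via a bundle argument, exactly in the spirit of the setup in equation \eqref{eq: nonzerotautological}.

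First I would verify the strict chain of inclusions $\sigma_{r-1}(X) \subsetneq \sigma_r(X)$ for all $r \le r_g(X)$; this is where Adlandsvik's result enters, and it is the input that makes the argument go through (without strict inclusion, the set $\sigma_r(X) \setminus \sigma_{r-1}(X)$ could be empty, in which case path-connectedness is vacuous but not what is wanted). Given strict inclusion, I would invoke \cite[Corollary~4.16]{Mumford}, which tells us that the complement of a proper closed subvariety inside an irreducible complex projective variety is path-connected in the Euclidean topology. Applied to $S \coloneqq \sigma_r(X) \setminus \sigma_{r-1}(X) \subseteq \sigma_r(X)$, this gives path-connectedness of $S$.

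Next I would pass from the projective picture to the affine cone. Consider the fiber bundle
\[
\mathcal{O}_{S}^{\circ}(-1) = \{(x,v) \in \PP W \times W : x \in S, \; v \in \widehat{x} \setminus \{0\}\}
\]
with the two projections $p_1 \colon \mathcal{O}_{S}^{\circ}(-1) \to S$ and $p_2 \colon \mathcal{O}_{S}^{\circ}(-1) \to W$. Each fiber of $p_1$ is homeomorphic to $\CC \setminus \{0\}$, which is path-connected. Combining path-connectedness of the base $S$ with path-connectedness of all fibers and local triviality, I would conclude that the total space $\mathcal{O}_{S}^{\circ}(-1) = p_1^{-1}(S)$ is path-connected. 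Since the continuous image of a path-connected space is path-connected, $p_2(p_1^{-1}(S))$ is path-connected as well.

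Finally, I would identify $p_2(p_1^{-1}(S))$ with the target set. By the definition of the affine cone, $p_2(p_1^{-1}(S)) = \widehat{S} \setminus \{0\} = \widehat{\sigma}_r(X) \setminus \widehat{\sigma}_{r-1}(X)$, and by the characterization of border rank recorded in Section~\ref{sec:back},
\[
\widehat{\sigma}_r(X) \setminus \widehat{\sigma}_{r-1}(X) = \{x \in W : \brank_X(x) = r\},
\]
which completes the proof. The only genuine obstacle is the first step: the entire argument collapses if one does not have strict inclusion in the secant chain, and that is why the hypothesis $r \le r_g(X)$ is essential; the rest of the argument is essentially formal bookkeeping once Mumford's connectedness result and the $\CC^*$-bundle structure are in hand.
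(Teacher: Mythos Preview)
Your proposal is correct and follows essentially the same approach as the paper: you invoke {\AA}dlandsvik for the strict inclusion $\sigma_{r-1}(X)\subsetneq\sigma_r(X)$, apply \cite[Corollary~4.16]{Mumford} to get path-connectedness of $S=\sigma_r(X)\setminus\sigma_{r-1}(X)$, and then lift via the $\CC^*$-bundle $\mathcal{O}_S^{\circ}(-1)$ to conclude that $\widehat{\sigma}_r(X)\setminus\widehat{\sigma}_{r-1}(X)$ is path-connected. The paper's argument is identical in structure and in the ingredients used.
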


Let $W_1, \dots, W_d$ and $W$ be finite-dimensional complex vector spaces.  Applying Theorem~\ref{thm:cpxbrkrcon} to the special cases $X = \Seg(\PP W_1 \times \dots \times \PP W_d)$ and $X =  \nu_d (\PP W)$, we obtain the path-connectedness of tensor border rank and symmetric border rank over $\mathbb{C}$.
\begin{corollary}[Connectedness of border rank-$r$ complex tensors]\label{cor:conncomplxbork}
Let $r $ be not more than the complex generic tensor rank. The set of border rank-$r$ complex  tensors
\[
\{A \in W_1 \otimes \dots \otimes W_d : \brank(A) = r \}
\]
is a path-connected set.
\end{corollary}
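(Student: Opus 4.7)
The plan is to obtain this corollary as a direct specialization of Theorem~\ref{thm:cpxbrkrcon}, with $X$ taken to be the Segre variety $\Seg(\PP W_1 \times \dots \times \PP W_d) \subset \PP W$, where $W = W_1 \otimes \dots \otimes W_d$. First I would verify that this Segre variety satisfies the hypotheses of the theorem: it is an irreducible projective variety (it is the image of the irreducible variety $\PP W_1 \times \dots \times \PP W_d$ under the Segre embedding), it is nondegenerate in $\PP W$ (standard; the rank-one tensors span all of $W$), and under our standing assumption that each $\dim W_i \ge 2$ it is a proper subvariety of $\PP W$ (since tensors of rank $>1$ exist).

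Next I would match the definitions so that Theorem~\ref{thm:cpxbrkrcon} can be quoted verbatim. By the identification set up in Section~\ref{sec:conn rk r},
\[
\brank(A) = \brank_{\Seg(\PP W_1 \times \dots \times \PP W_d)}(A)
\]
for every $A \in W_1\otimes\dots\otimes W_d$, so the two candidate definitions of ``border rank'' agree, and $\{A \in W : \brank(A) = r\}$ coincides with the set $\{x \in W : \brank_X(x) = r\}$ appearing in the theorem. Likewise, the complex generic $X$-rank $r_g(X)$ of the Segre variety equals, by definition, the smallest $r$ with $\overline{s}_r(X) = W$, which is exactly what is meant by the complex generic tensor rank in the statement. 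Therefore the hypothesis $r \le r_g(X)$ is exactly the assumption of the corollary.

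With these identifications in place, Theorem~\ref{thm:cpxbrkrcon} applies and yields path-connectedness immediately. I do not anticipate any genuine obstacle here — the proof is essentially a bookkeeping exercise verifying that the Segre variety meets the four hypotheses (irreducible, nondegenerate, proper in $\PP W$, and border rank in the $X$-sense matching the tensor border rank); the real work has already been done in Theorem~\ref{thm:cpxbrkrcon}, where the argument via the fiber bundle $\mathcal{O}_S^{\circ}(-1)$ of \eqref{eq: nonzerotautological} and the strict chain $\sigma_{r-1}(X) \subsetneq \sigma_r(X)$ from \cite{Adlandsvik87} does the connectedness lifting from projective space to the affine cone.
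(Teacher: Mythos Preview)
Your proposal is correct and matches the paper's approach exactly: the paper derives this corollary by simply applying Theorem~\ref{thm:cpxbrkrcon} to the special case $X = \Seg(\PP W_1 \times \dots \times \PP W_d)$, with no further argument. Your write-up is in fact more detailed than the paper's, which offers only the one-line sentence preceding the corollary.
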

\begin{corollary}[Connectedness of  symmetric border rank-$r$ complex symmetric tensors]\label{cor:conncomplxsymbrk}
Let $r $ be not more than the complex generic symmetric rank.  The set of symmetric border rank-$r$ complex symmetric tensors
\[
\{A \in \mathsf{S}^d (W) : \sbrank(A) = r \}
\]
is a path-connected set.
\end{corollary}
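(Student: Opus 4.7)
The plan is to reduce directly to Theorem~\ref{thm:cpxbrkrcon} by taking $X$ to be the Veronese variety. Specifically, set $X = \nu_d(\PP W) \subsetneq \PP(\mathsf{S}^d W)$, the image of $\PP W$ under the $d$-uple Veronese embedding $[v] \mapsto [v^{\otimes d}]$. What I need to verify is that this $X$ satisfies the hypotheses of Theorem~\ref{thm:cpxbrkrcon}, and that the notion of $X$-border rank for this particular $X$ agrees with symmetric border rank, with the corresponding generic ranks also agreeing.

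First, $\nu_d(\PP W)$ is irreducible because it is the continuous (in fact regular) image of the irreducible variety $\PP W$ under the Veronese morphism. It is nondegenerate in $\PP(\mathsf{S}^d W)$ because the symmetric powers $v^{\otimes d}$ for $v \in W$ linearly span $\mathsf{S}^d W$ (a standard polarization argument: any symmetric multilinear form vanishing on all pure powers vanishes identically). Hence $X = \nu_d(\PP W)$ is a complex irreducible nondegenerate projective subvariety of $\PP(\mathsf{S}^d W)$, exactly as required by Theorem~\ref{thm:cpxbrkrcon}.

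Next, by the identifications recalled in Section~\ref{sec:back}, a symmetric tensor $A \in \mathsf{S}^d W$ has $\srank(A) \le r$ iff $A$ lies in $s_r(\nu_d(\PP W))$, and correspondingly $\sbrank(A) \le r$ iff $A \in \overline{s}_r(\nu_d(\PP W)) = \widehat{\sigma}_r(\nu_d(\PP W))$. Thus
\[
\{A \in \mathsf{S}^d W : \sbrank(A) = r\} = \widehat{\sigma}_r(\nu_d(\PP W)) \setminus \widehat{\sigma}_{r-1}(\nu_d(\PP W)),
\]
which is precisely the set of $X$-border rank-$r$ points for $X = \nu_d(\PP W)$. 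Moreover, the complex generic symmetric rank coincides with $r_g(\nu_d(\PP W))$ by the definition of $r_g(X)$ in Section~\ref{sec:back}.

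With all these identifications in place, the corollary follows immediately from Theorem~\ref{thm:cpxbrkrcon} applied to $X = \nu_d(\PP W)$, under the hypothesis $r \le r_g(\nu_d(\PP W))$. There is no real obstacle here; the only thing to be careful about is writing out the standard identifications (Veronese variety as the cone of symmetric rank-one tensors, symmetric border rank as $X$-border rank, and the coincidence of the two notions of generic rank) cleanly enough that the reduction to Theorem~\ref{thm:cpxbrkrcon} is unambiguous.
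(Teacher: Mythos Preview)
Your proposal is correct and follows exactly the paper's approach: the corollary is obtained by specializing Theorem~\ref{thm:cpxbrkrcon} to $X = \nu_d(\PP W)$, using the identifications of symmetric (border) rank with $X$-(border) rank recalled in Section~\ref{sec:back}. The additional verification you give that $\nu_d(\PP W)$ is irreducible and nondegenerate is routine and in line with what the paper takes for granted.
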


We next move on to the path-connectedness of $X$-rank (as opposed to border $X$-rank) over $\mathbb{C}$.
 For the following discussions, one should bear in mind that every complex variety is naturally a real semialgebraic set; and  every complex nonsingular variety of complex dimension $n$ is a complex smooth manifold of complex dimension $n$, which is naturally a real smooth manifold of real dimension $2n$.  Throughout this article, whenever we refer to the $k$th homotopy group of a semialgebraic set $X$, we mean the $k$th topological homotopy group of $X$ under its Euclidean topology.
Recall the following well-known fact.

\begin{theorem}\label{thm:pi1 removing a submanifold}
If $M$ is a smooth manifold and $X$ is a union of finitely many embedded submanifolds of $M$ all with real codimension at least $n$, then $\pi_k(M)\cong \pi_k(M \setminus X)$ for all $k =0,\dots, n - 2$. 
\end{theorem}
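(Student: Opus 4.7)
The plan is to prove that the inclusion $j\colon M\setminus X\hookrightarrow M$ induces an isomorphism $j_\ast\colon\pi_k(M\setminus X)\to\pi_k(M)$ for $k\le n-2$, via Whitney smooth approximation together with Thom's transversality theorem. Write $X=X_1\cup\cdots\cup X_N$ with each $X_i$ a smoothly embedded submanifold of $M$ of real codimension at least $n$, set $m=\dim M$, and take the basepoint $\ast\in M\setminus X$, which is an open subset of $M$ (the $X_i$ are finitely many and each has positive codimension).

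For surjectivity when $k\le n-1$: given a pointed continuous map $f\colon(S^k,s_0)\to(M,\ast)$, first Whitney-approximate $f$ by a pointed smooth map, and then perturb the result by a small homotopy, rel a neighborhood of $s_0$ whose image already lies in $M\setminus X$, to be transverse to each $X_i$. Transversality gives
\[
\dim f^{-1}(X_i)\le k+\dim X_i-m\le k-n<0,
\]
so $f^{-1}(X_i)=\varnothing$ for every $i$, and the new $f$ factors through $M\setminus X$ and still represents $[f]\in\pi_k(M)$.

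For injectivity when $k\le n-2$: given a pointed homotopy $F\colon S^k\times[0,1]\to M$ between two pointed maps into $M\setminus X$, an analogous perturbation, performed rel the closed set $S^k\times\{0,1\}\cup\{s_0\}\times[0,1]$, makes $F$ transverse to every $X_i$; now
\[
\dim F^{-1}(X_i)\le (k+1)+\dim X_i-m\le (k+1)-n\le -1,
\]
so $F^{-1}(X_i)=\varnothing$ and $F$ is in fact a homotopy through $M\setminus X$. The $k=0$ case is formally identical and is where the hypothesis $n\ge 2$ is used.

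The one technical point is the relative nature of the perturbations: they must fix the basepoint (for surjectivity) and the basepoint path together with the endpoints of the homotopy (for injectivity). This is handled by the standard relative forms of Whitney approximation and Thom transversality, together with the observation that, because $M\setminus X$ is open, $f$ (resp.\ $F$) already sends a neighborhood of the pinned set into $M\setminus X$, so the perturbations can be arranged to be supported outside that neighborhood.
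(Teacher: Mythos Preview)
Your transversality argument is correct and is the standard proof of this well-known fact. The paper itself gives no proof of Theorem~\ref{thm:pi1 removing a submanifold}; it is simply recalled as known and then combined with the cell decomposition of semialgebraic sets to obtain Theorem~\ref{thm:removing a semialgebraicsubset}, which is the tool actually used downstream. So there is nothing in the paper to compare your approach against --- you have supplied exactly the argument one would expect.

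One small caveat: your parenthetical that $M\setminus X$ is open ``because the $X_i$ are finitely many and each has positive codimension'' is not quite right, since an embedded submanifold need not be closed (an open arc in $\mathbb{R}^2$, for instance). This matters only for the relative step, where you want $f$ (resp.\ $F$) to land in $M\setminus X$ on a \emph{neighborhood} of the pinned set so that the perturbation can be supported away from it. If one reads the hypothesis as requiring the $X_i$ to be closed (as is standard in most statements of this result, and as is the case in the paper's applications to strata of semialgebraic sets), the issue disappears; otherwise it can be handled using that embedded submanifolds are locally closed. Either way the core of your argument stands.
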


By \cite[Proposition~2.9.10]{BCR13}, any semialgebraic subset $X \subsetneq \RR^m$ is a disjoint union of finitely many submanifolds of $\RR^m$. This yields the following corollary of Theorem~\ref{thm:pi1 removing a submanifold}, which will be an important tool for us.

\begin{theorem}\label{thm:removing a semialgebraicsubset}
If $M$ is a smooth manifold and $X$ is a semialgebraic subset of $M$ of real codimension at least $n$, then $\pi_k(M)\cong \pi_k(M \setminus X)$ for $k =0,\dots, n - 2$. 
\end{theorem}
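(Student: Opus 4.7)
The plan is a short reduction to Theorem~\ref{thm:pi1 removing a submanifold} using the stratification result from \cite{BCR13} quoted immediately before the statement. Proposition~2.9.10 there produces precisely the input---a finite disjoint union of smooth embedded submanifolds---that Theorem~\ref{thm:pi1 removing a submanifold} accepts, so the only bookkeeping is on the codimensions.

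Concretely, I would first invoke \cite[Proposition~2.9.10]{BCR13} to write $X = X_1 \sqcup \cdots \sqcup X_s$ as a finite disjoint union of smooth embedded submanifolds. Next I would verify that each $X_i$ has real codimension at least $n$ in $M$: since $X_i \subseteq X$ and the semialgebraic dimension $\dim X$ equals the maximum of the dimensions of the $X_i$, the codimension hypothesis on $X$ forces $\dim X_i \le \dim X \le \dim M - n$ for every $i$. Feeding the collection $\{X_1,\dots,X_s\}$ into Theorem~\ref{thm:pi1 removing a submanifold} then delivers the claimed isomorphism $\pi_k(M) \cong \pi_k(M \setminus X)$ for $k = 0, \dots, n-2$.

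The only mild wrinkle, and the closest thing to an obstacle, is that \cite[Proposition~2.9.10]{BCR13} is stated for semialgebraic subsets of $\RR^m$ whereas our $M$ is an abstract smooth manifold. In every application in the paper, however, $M$ is itself a real algebraic or semialgebraic manifold sitting inside some Euclidean space (for example, an open subset of a tensor product space, a Stiefel manifold, or a Grassmannian), so one can simply stratify $X$ in the ambient $\RR^m$ and intersect with $M$; the resulting pieces are still smooth semialgebraic submanifolds of $M$ with the same codimension bound, so no essential difficulty remains and the statement really is a one-line corollary of Theorem~\ref{thm:pi1 removing a submanifold}.
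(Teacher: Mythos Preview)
Your proposal is correct and matches the paper's own argument exactly: the paper derives the theorem as an immediate corollary of Theorem~\ref{thm:pi1 removing a submanifold} by invoking \cite[Proposition~2.9.10]{BCR13} to stratify $X$ into finitely many submanifolds. Your additional remark about the ambient-$\RR^m$ wrinkle is a fair point that the paper leaves implicit.
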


Another standard fact that we will use repeatedly is the following well-known result \cite{Hatcher00}, stated here  for easy reference.
\begin{theorem}\label{thm:homlongseq}
Let $F \to E \xrightarrow[]{p} B$ be a fiber bundle and $B$ be path-connected. For any $x \in F$, $b = p(x)$, there is a long exact sequence
\begin{equation*}
\dots \to \pi_{i+1}(F, x) \to \pi_{i+1}(E, x) \xrightarrow[]{p_*} \pi_{i+1}(B, b) \to \pi_{i}(F, x) \to \dots \to \pi_0(E, x) \to 0.
\end{equation*}
\end{theorem}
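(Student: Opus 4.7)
The plan is to derive the long exact sequence as a formal consequence of two ingredients: the long exact sequence in homotopy of a pointed pair $(E,F)$, which is standard, together with the identification of the relative homotopy groups of that pair with the absolute homotopy groups of the base $B$. All the real content lies in the second ingredient, where the fiber bundle structure of $p$ enters through a homotopy lifting argument.

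First I would verify that the fiber bundle $p \colon E \to B$ enjoys the homotopy lifting property with respect to every disk $D^n$: given $H \colon D^n \times I \to B$ together with a lift $\widetilde{h}_0 \colon D^n \times \{0\} \to E$, there is a lift $\widetilde{H} \colon D^n \times I \to E$ of $H$ extending $\widetilde{h}_0$. The argument is local-to-global. By compactness of $D^n \times I$ one may subdivide it into cells so fine that $H$ sends each closed cell into a single trivializing open set $U_\alpha \subseteq B$ over which $p^{-1}(U_\alpha) \cong U_\alpha \times F$. Inside such a chart the lifting problem is trivial — simply reuse the $F$-component of the already-constructed portion of the lift — and the resulting cell-by-cell lifts automatically agree on overlaps, so they glue to the desired $\widetilde{H}$.

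Second, the homotopy lifting property forces the map induced by $p$ on relative homotopy groups,
\[
p_* \colon \pi_n(E, F, x) \longrightarrow \pi_n(B, b), \qquad n \geq 1,
\]
to be an isomorphism. Surjectivity: a representative $\alpha \colon (D^n, \partial D^n) \to (B, b)$ can be viewed as a homotopy from the constant map at $b$; lifting it from the constant map at $x$ produces a relative map $(D^n, \partial D^n) \to (E, F)$ whose image under $p_*$ is $[\alpha]$. Injectivity: given two relative maps with the same image in $\pi_n(B, b)$, apply the lifting property to a connecting homotopy in $B$ to construct one in $E$. Substituting this isomorphism into the long exact sequence of the pair $(E, F)$ and using the path-connectedness of $B$ to identify $\pi_{i+1}(B, b)$ across basepoints gives precisely the stated sequence.

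The only delicate point is the global assembly of local lifts in the first step, which requires that the trivializing cover of $B$ admit a locally finite refinement, i.e.\ that $B$ be paracompact. In every setting where this theorem is invoked later in the paper, $B$ will be either a smooth manifold or a (locally closed) semialgebraic set, for which paracompactness is automatic, so no extra hypothesis need be imposed.
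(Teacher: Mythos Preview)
Your sketch is correct and is essentially the standard argument found in Hatcher. The paper itself does not prove this theorem: it is stated there as a ``well-known result'' with a citation to \cite{Hatcher00} and is used only as a black box, so there is no proof in the paper to compare against.

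One small correction to your final paragraph: the paracompactness of $B$ is not actually needed for the homotopy lifting property with respect to disks. Since $D^n \times I$ is compact, its image under $H$ is compact in $B$ and is therefore covered by finitely many trivializing opens; the Lebesgue number lemma then gives a subdivision of $D^n \times I$ into finitely many cells, each mapped into a single chart. So the local-to-global step goes through with no hypothesis on $B$ beyond the existence of local trivializations, and your closing caveat can be dropped.
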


Let $X \subsetneq \PP W$ be a complex irreducible nondegenerate nonsingular projective variety. When $r \le r_g(X)$, the aforementioned fact that $\sigma_{r-1}(X) \subsetneq \sigma_r(X)$ implies that the complex codimension of $s_{r-1}(X)$ in $s_r(X)$ is at least one. So the preimage $s_r^{-1}(s_{r-1} (X))$ has complex codimension at least one in $(\widehat{X} \setminus \{0\} )^r$, i.e., the real codimension of $s_r^{-1}(s_{r-1} (X))$ in $(\widehat{X} \setminus \{0\})^r$ is at least two. Let $\mathcal{O}_{X}^{\circ} (-1)$ be the bundle in 
\eqref{eq: nonzerotautological} with $S = X$. Let $p_1 \colon \mathcal{O}_{X}^{\circ} (-1) \to \PP W$  and $p_2 \colon \mathcal{O}_{X}^{\circ} (-1) \to W$ be the projections. For any $x \in X$, the fiber $p_1^{-1} (x) = \widehat{x} \setminus \{0\} \cong \CC \setminus \{0\}$ is path-connected. Since $X$ is irreducible, $X$ is connected. Thus $p_1^{-1} (X)$ is path-connected, which implies $\widehat{X} \setminus \{0\} = p_2 (p_1^{-1} (X))$ is path-connected. By Theorem~\ref{thm:removing a semialgebraicsubset}, the semialgebraic subset
\[
(\widehat{X} \setminus \{0\} )^r \setminus s_r^{-1}(s_{r-1} (X))
\]
is path-connected. Therefore $s_r(X) \setminus s_{r-1}(X)$ is also path-connected, being the image of a path-connected set under a continuous map. We have thus deduced the path-connectedness of complex $X$-rank.
\begin{theorem}[Connectedness of $X$-rank-$r$ points]\label{thm:cpxrkrcon}
Let $W$ be a complex vector space and $X \subsetneq \PP W$ be any complex irreducible nondegenerate projective variety.
 If $r \le r_g(X)$, then the set $\{ x \in W : \rank_X (x) = r\}$  is a path-connected set.
\end{theorem}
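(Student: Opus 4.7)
The plan is to realize the rank-$r$ locus $s_r(X)\setminus s_{r-1}(X) = \{x\in W : \rank_X(x)=r\}$ as the continuous image under $s_r$ of a path-connected subset of $(\widehat{X}\setminus\{0\})^r$, then conclude by continuity. The first step is to check that $\widehat{X}\setminus\{0\}$, and hence $(\widehat{X}\setminus\{0\})^r$, is path-connected. This proceeds exactly as in the lead-up to Theorem~\ref{thm:cpxbrkrcon}: because $X$ is complex irreducible (hence Euclidean-connected) and the bundle $\mathcal{O}_{X}^{\circ}(-1)\to X$ has path-connected fiber $\CC\setminus\{0\}$, its total space is path-connected, and projecting via $p_2$ recovers $\widehat{X}\setminus\{0\} = p_2(p_1^{-1}(X))$.

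Next, I would set $Z \coloneqq s_r^{-1}(s_{r-1}(X))$ and aim to show that $(\widehat{X}\setminus\{0\})^r \setminus Z$ is path-connected, since $s_r$ carries this complement precisely onto $s_r(X)\setminus s_{r-1}(X)$: every rank-$r$ element admits at least one $r$-term decomposition, and such a decomposition automatically lies outside $Z$. To apply Theorem~\ref{thm:removing a semialgebraicsubset} with ambient manifold $(\widehat{X}\setminus\{0\})^r$ and removed subset $Z$, one needs $Z$ to be semialgebraic (immediate, as the preimage of a constructible set under the polynomial map $s_r$) and of real codimension at least $2$. The codimension bound comes from combining the strict inclusion $\sigma_{r-1}(X)\subsetneq\sigma_r(X)$, available because $r\le r_g(X)$, with the fiber dimension formula: $\widehat{\sigma}_{r-1}(X)$ has complex codimension $\ge 1$ in $\widehat{\sigma}_r(X)$, and pulling back along the dominant morphism $s_r\colon (\widehat{X}\setminus\{0\})^r \to \widehat{\sigma}_r(X)$ of irreducible complex varieties preserves this, giving $Z$ complex codimension $\ge 1$ and hence real codimension $\ge 2$.

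The main technical obstacle is this codimension computation, in particular justifying that pullback along $s_r$ does not collapse complex codimension. This rests on the standard equidimensionality of generic fibers of a dominant morphism between irreducible complex varieties, together with the fact that $s_r\colon (\widehat{X}\setminus\{0\})^r \to \widehat{\sigma}_r(X)$ is genuinely dominant (which holds by the very definition of $\widehat{\sigma}_r(X)$ as the Zariski closure of the image of $s_r$). Once this is secured, Theorem~\ref{thm:removing a semialgebraicsubset} yields path-connectedness of $(\widehat{X}\setminus\{0\})^r\setminus Z$, and applying the continuous map $s_r$ then delivers the desired path-connectedness of $\{x\in W : \rank_X(x)=r\}$.
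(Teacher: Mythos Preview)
Your proposal is correct and follows essentially the same approach as the paper: show $\widehat{X}\setminus\{0\}$ is path-connected via the $\mathcal{O}_X^\circ(-1)$ bundle, argue that $Z=s_r^{-1}(s_{r-1}(X))$ has complex codimension at least one (hence real codimension at least two) in $(\widehat{X}\setminus\{0\})^r$, apply Theorem~\ref{thm:removing a semialgebraicsubset}, and push forward under $s_r$. Your justification of the codimension step via dominance of $s_r$ is slightly more elaborate than the paper's, but note that it suffices to observe that $s_r^{-1}(\widehat{\sigma}_{r-1}(X))$ is a \emph{proper} closed subvariety of the irreducible variety $(\widehat{X}\setminus\{0\})^r$ (else $\widehat{\sigma}_r(X)\subseteq\widehat{\sigma}_{r-1}(X)$), which immediately gives complex codimension $\ge 1$ without any fiber-dimension analysis.
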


Let $W_1, \dots, W_d$ and $W$ be finite-dimensional complex vector spaces. Applying Theorem~\ref{thm:cpxrkrcon} to the special cases $X = \Seg(\PP W_1 \times \dots \times \PP W_d)$ and $X =  \nu_d (\PP W)$, we obtain the path-connectedness of tensor rank and symmetric tensor rank over $\mathbb{C}$.
\begin{corollary}[Connectedness of rank-$r$ complex tensors]\label{cor:cmplexbrktensorcon}
\begin{enumerate}[\upshape (i)]
\item Let $r $ be not more than the complex generic tensor rank. The set of rank-$r$ complex tensors
\[
\{A \in W_1 \otimes \dots \otimes W_d : \rank(A) = r \}
\]
is a path-connected set.

\item Let $r $ be not more than the complex generic symmetric rank.  The set of symmetric rank-$r$ complex symmetric tensors
\[
\{A \in \mathsf{S}^d (W) : \srank(A) = r \}
\]
is a path-connected set.
\end{enumerate}
\end{corollary}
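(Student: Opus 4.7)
The plan is to obtain both parts of this corollary by directly specializing Theorem~\ref{thm:cpxrkrcon}, as foreshadowed in the discussion preceding the statement. All the real work has been done in Theorem~\ref{thm:cpxrkrcon}; what remains is to verify that the Segre and Veronese varieties satisfy the hypotheses of that theorem and that the corresponding notions of $X$-rank and generic $X$-rank match the tensor-theoretic ones.

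For part (i), I would take $X = \Seg(\PP W_1 \times \cdots \times \PP W_d)$, regarded as a complex projective subvariety of $\PP W$ with $W = W_1 \otimes \cdots \otimes W_d$. To invoke Theorem~\ref{thm:cpxrkrcon}, I must check that $X$ is an irreducible nondegenerate projective variety with $X \subsetneq \PP W$. Irreducibility holds because $X$ is the image of the irreducible variety $\PP W_1 \times \cdots \times \PP W_d$ under the Segre morphism. Nondegeneracy follows from the fact that the rank-one tensors $w_1 \otimes \cdots \otimes w_d$ include the basis elements $e_{i_1} \otimes \cdots \otimes e_{i_d}$ and hence span $W$. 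The strict inclusion $X \subsetneq \PP W$ is automatic once $d \ge 2$ and each $\dim W_i \ge 2$ (as assumed throughout the paper). Under the identifications recalled in Section~\ref{sec:conn rk r}, $\rank_X(A) = \rank(A)$ and the complex generic $X$-rank $r_g(X)$ coincides with the complex generic tensor rank, so the hypothesis $r \le r_g(X)$ of Theorem~\ref{thm:cpxrkrcon} is exactly the hypothesis of the corollary. Theorem~\ref{thm:cpxrkrcon} then yields path-connectedness of $\{A \in W_1 \otimes \cdots \otimes W_d : \rank(A) = r\}$.

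For part (ii), the argument is parallel with $X = \nu_d(\PP W) \subsetneq \PP(\mathsf{S}^d W)$. The Veronese variety is irreducible (as the image of $\PP W$ under the Veronese morphism), nondegenerate (the degree-$d$ monomials in a basis of $W$ span $\mathsf{S}^d W$), and properly contained in the ambient projective space whenever $\dim W \ge 2$ and $d \ge 2$. Again $\rank_X(A) = \srank(A)$ and $r_g(X)$ is the complex generic symmetric rank, so Theorem~\ref{thm:cpxrkrcon} applies and delivers the conclusion.

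Since the topological heavy lifting has already been accomplished in Theorem~\ref{thm:cpxrkrcon}, I do not expect any real obstacle here; the only substantive points to verify are the standard irreducibility and nondegeneracy of the Segre and Veronese embeddings and the matching of $r_g(X)$ with the appropriate complex generic rank, all of which are classical.
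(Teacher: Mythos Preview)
Your proposal is correct and matches the paper's approach exactly: the paper derives the corollary by a one-line specialization of Theorem~\ref{thm:cpxrkrcon} to $X = \Seg(\PP W_1 \times \cdots \times \PP W_d)$ and $X = \nu_d(\PP W)$, and you have simply spelled out the routine verification of the hypotheses (irreducibility, nondegeneracy, proper containment) that the paper leaves implicit.
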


\section{Path-connectedness of real tensor ranks}\label{sec:realrank}

We will now establish results similar to those in Section~\ref{sec:cplxrank} but over $\mathbb{R}$; these will however require quite different techniques. The marked difference between real tensor rank and complex tensor rank will not come as too much of a surprise to those familiar with tensor rank, which depends very much on the base field.

Let $W$ be a vector space over $\mathbb{F} = \RR$ or $\CC$. Let $X \subseteq \PP W$ be an irreducible nondegenerate nonsingular projective variety. In particular $\widehat{X} \setminus \{0\}$ is naturally a smooth $\mathbb{F}$-manifold. As usual, we will denote the tangent space of a smooth manifold $M$ at a smooth point $x \in M$ by $\mathsf{T}_x M$. Let $x_1, \dots, x_{r-1}$ be general points in $\widehat{X} \setminus \{0\}$. We define
\begin{equation}\label{eq:ZY}
Z \coloneqq s_r^{-1} \bigl(s_{r-1}(X)\bigr)
\quad\text{and}\quad
Y \coloneqq \{ x \in \widehat{X} \colon (x_1, \dots, x_{r-1}, x) \in Z\}.
\end{equation}
Pick a general $x_r \in Y$. Since $s_r \colon Z \to s_{r-1}(X)$ is surjective, its differential
\[
s_{r*} \colon \mathsf{T}_{(x_1, \dots, x_r)} Z \to \mathsf{T}_{x_1 + \dots + x_r} s_{r-1}(X)
\]
is also surjective. Because $x_1, \dots, x_{r-1}$ are general in $\widehat{X}$,
\[
\mathsf{T}_{x_1 + \dots + x_{r-1}} s_{r-1}(X) = \mathsf{T}_{x_1}\widehat{X} + \dots + \mathsf{T}_{x_{r-1}}\widehat{X}
\]
by the semialgebraic Terracini's lemma \cite[Lemma~12]{QCL16}. On the other hand,
\begin{equation*}
\begin{aligned}
\mathsf{T}_{x_1 + \dots + x_r} s_{r-1}(X)
&= s_{r*} ( \mathsf{T}_{(x_1, \dots, x_r)} Z ) \\
&= s_{r*} \bigl( \mathsf{T}_{x_1} \widehat{X} \oplus \dots \oplus \mathsf{T}_{x_{r-1}} \widehat{X} \oplus \mathsf{T}_{x_r} Y \bigr) \\
&= \mathsf{T}_{x_1} \widehat{X} + \dots + \mathsf{T}_{x_{r-1}} \widehat{X} + \mathsf{T}_{x_r} Y \\
&\supseteq \mathsf{T}_{x_1} \widehat{X} + \dots + \mathsf{T}_{x_{r-1}} \widehat{X} \\
&= \mathsf{T}_{x_1 + \dots + x_{r-1}} s_{r-1}(X),
\end{aligned}
\end{equation*}
which, by a dimension count, implies that
\begin{equation}\label{eq:tangsps}
\mathsf{T}_{x_r} Y \subseteq \mathsf{T}_{x_1} \widehat{X} + \dots + \mathsf{T}_{x_{r-1}} \widehat{X}.
\end{equation}
Let $\dim_\FF (X) \coloneqq n-1$ and the codimension of $Y$ in $\widehat{X}$,
$\operatorname{codim}_{\mathbb{F}} (Y, \widehat{X}) \coloneqq k$.
Then
\[
\operatorname{codim}_{\mathbb{F}} \bigl(Z, (\widehat{X} \setminus \{0\})^{r} \bigr) = k,
\]
and \eqref{eq:tangsps} implies
\begin{equation}\label{eq:tanspsdeteq}
\dim_\FF \bigl(\mathsf{T}_{x_1} \widehat{X} + \dots + \mathsf{T}_{x_{r}} \widehat{X}\bigr) \le k + \dim_\FF \bigl(\mathsf{T}_{x_1} \widehat{X} + \dots + \mathsf{T}_{x_{r-1}} \widehat{X}\bigr).
\end{equation}
To establish the path-connectedness of tensor rank and symmetric tensor rank over $\mathbb{R}$, we will need \eqref{eq:tanspsdeteq} and the following notion of defectivity.
\begin{definition}\label{def:defect}
Let $W$ be a vector space over $\mathbb{F} = \RR$ or $\CC$, and $X \subsetneq \PP W$  be an irreducible projective variety of dimension $m-1$. We say that $X$ is \emph{not $r$-defective} if
\[
\dim_\FF \bigl(\sigma_r(X)\bigr) = \min\{rm - 1, \dim_\FF (W) - 1\}
\]
and $r$-defective otherwise.
\end{definition}
We will address the path-connectedness of symmetric tensor rank over $\RR$ before addressing that  of (nonsymmetric) tensor rank over $\RR$ as we have more detailed results for the former. The reason being that our approach requires knowledge of $r$-defectivity. For symmetric tensors, the $r$-defectivity of $\sigma_r(\nu_d(\PP U))$ is completely known due to the work of Alexander and Hirschowitz but for nonsymmetric tensors, the $r$-defectivity of  $\sigma_r (\Seg (\PP W_1 \times \dots \times \PP W_d))$ has not been completely determined.

\subsection{Path-connectedness of real symmetric tensor rank and real symmetric border rank}

Let $W$ be the complexification of a real vector space $V$. Recall that if $X = \nu_d (\PP W)$, then $X(\RR) = \nu_d (\PP V)$.  We first address the symmetric rank-one case, i.e., the path-connectedness of $\widehat{X}(\RR) \setminus \{0\}$, and later generalize it to arbitrary symmetric rank.
\begin{proposition}\label{prop:sr1oddeven}
Let $V$ be a real vector space.
\begin{enumerate}[\upshape (i)]
\item When $d$ is odd, the set of symmetric rank-one real symmetric tensors
\[
\{A \in \mathsf{S}^d (V) : \srank(A) = 1 \},
\]
is a path-connected set.
\item When $d$ is even, the set of symmetric rank-one real symmetric tensors
\[
\{A \in \mathsf{S}^d (V) : \srank(A) = 1 \}
\]
has two connected components.
\end{enumerate}
\end{proposition}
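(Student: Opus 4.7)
The text preceding the statement identifies the set $\{A \in \mathsf{S}^d(V) : \srank(A) = 1\}$ with the punctured affine cone $\widehat{X}(\RR) \setminus \{0\}$ for $X = \nu_d(\PP W)$, which is precisely the set of nonzero tensors of the form $\lambda v^{\otimes d}$ with $\lambda \in \RR \setminus \{0\}$ and $v \in V \setminus \{0\}$. The plan is to analyze this description directly in each parity, reducing everything to the continuity of the Veronese map $\phi \colon V \setminus \{0\} \to \mathsf{S}^d(V)$, $v \mapsto v^{\otimes d}$, together with the observation that $V \setminus \{0\}$ is path-connected since $\dim_\RR V \ge 2$ by the standing assumption of the paper.

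For (i), when $d$ is odd, every $\lambda \in \RR$ admits a real $d$-th root, so $\lambda v^{\otimes d} = (\lambda^{1/d} v)^{\otimes d}$, and hence $\widehat{X}(\RR) \setminus \{0\} = \phi(V \setminus \{0\})$. Path-connectedness then follows immediately as the continuous image of a path-connected space.

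For (ii), when $d$ is even, $(\mu v)^{\otimes d} = \mu^d v^{\otimes d}$ with $\mu^d \ge 0$, so $\phi$ misses every negative scalar multiple of $v^{\otimes d}$. I would therefore decompose $\widehat{X}(\RR) \setminus \{0\} = C_+ \cup C_-$ with $C_\pm \coloneqq \{\pm v^{\otimes d} : v \in V \setminus \{0\}\}$; each $C_\pm$ is the continuous image of $V \setminus \{0\}$ under $v \mapsto \pm v^{\otimes d}$ and hence path-connected. It remains to show that $C_+$ and $C_-$ lie in genuinely distinct connected components.

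This separation step is the crux and the one place where parity really matters. I plan to handle it via the linear functional $L \colon \mathsf{S}^d(V) \to \RR$ defined by $L(A) \coloneqq \int_{\lVert x \rVert = 1} \langle A, x^{\otimes d} \rangle\, d\sigma(x)$, where $\sigma$ is the standard measure on the unit sphere. For $A = v^{\otimes d}$ with $v \ne 0$ and $d$ even, the integrand equals $\langle v, x \rangle^d$, a nonnegative continuous function on the sphere that vanishes only on a hyperplane slice, so $L(v^{\otimes d}) > 0$ and hence $L(-v^{\otimes d}) < 0$. Thus $C_+ \subseteq \{L > 0\}$ and $C_- \subseteq \{L < 0\}$ sit in disjoint open half-spaces of $\mathsf{S}^d(V)$, ruling out any path in $\widehat{X}(\RR) \setminus \{0\}$ between them. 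The main obstacle is precisely finding such a clean topological invariant that separates the two sheets of the real Veronese cone when $d$ is even; once $L$ is in hand, the two-component structure is immediate.
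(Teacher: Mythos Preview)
Your proof is correct and follows the same overall architecture as the paper's: realize the rank-one locus as the image of the $d$th-power map $\phi$ on $V\setminus\{0\}$, and in the even case produce a continuous (in fact linear) functional that separates the two sheets. The differences are in execution. For path-connectedness of each piece, you invoke the single observation that $C_\pm$ is a continuous image of the path-connected space $V\setminus\{0\}$, whereas the paper writes down explicit connecting curves through a short case analysis (linearly independent versus dependent endpoints). For the separation in the even case, the paper uses the finite coordinate sum $\varphi(A) = \sum_{i=1}^n (e_i^*)^{\otimes d}(A)$, which on $v^{\otimes d}$ evaluates to $\sum_i v_i^d > 0$; your integral functional $L$ achieves the same strict positivity coordinate-free at the cost of fixing an inner product. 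Both choices work; yours is a bit more streamlined, while the paper's avoids integration and keeps everything finite and explicit.
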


\begin{proof}
Let $\dim_{\RR}(V)= n$, and $X(\RR) = \nu_d (\PP V)$. Fix a basis $\{e_1, \dots, e_n\}$ and a norm $\| \cdot \|$ for $V$. Let $\{e_1^*, \dots, e_n^*\}$ be the dual basis of $V^*$.
\begin{enumerate}[\upshape (i)]
\item We need to show for any $u, v \in V$ and $\lambda, \mu \in \RR$, with $\|u\| = \|v\| = 1$ and $\lambda, \mu \ne 0$, there is a continuous curve $\gamma (t) \subseteq \mathsf{S}^d (V)$ connecting $\lambda u^{\otimes d}$ and $\mu v^{\otimes d}$. Since $d$ is odd, we may assume that $\lambda, \mu > 0$. If $u$ and $v$ are linearly independent, let
\[
\gamma(t) \coloneqq \bigl(t \lambda^{1/d} u + (1 - t) \mu^{1/d} v\bigr)^{\otimes d},
\]
which is a continuous curve connecting $\lambda u^{\otimes d}$ and $\mu v^{\otimes d}$. If $u$ and $v$ are linearly dependent, say $v = \alpha u$ for some $\alpha \ne 0$, we consider two cases.

\textsc{Case I:} $\alpha > 0$. The continuous curve defined by $\gamma(t) \coloneqq \bigl(t \lambda + (1 - t) \mu \alpha^d\bigr) u^{\otimes d}$ connects $\lambda u^{\otimes d}$ and $\mu v^{\otimes d}$.

\textsc{Case II:} $\alpha < 0$. Let $w \in V$ be such that $u$ and $w$ are linearly independent. A continuous curve connecting $\lambda u^{\otimes d}$ and $\mu v^{\otimes d}$ is given by
\[
\gamma(t) \coloneqq 
\begin{cases}
\bigl(2t w + (1 - 2t) \lambda^{1/d}u\bigr)^{\otimes d} & \text{if}\; 0 \le t \le 1/2,\\
\bigl(2(1 - t) w + (2t - 1) \alpha \mu^{1/d} u\bigr)^{\otimes d} & \text{if}\; 1/2 \le t \le 1.
\end{cases}
\]

\item Consider the map
\[
\varphi \colon \widehat{X}(\RR) \setminus \{0\} \to \RR, \quad A \mapsto (e_1^*)^{\otimes d}(A) + \cdots + (e_n^*)^{\otimes d}(A).
\]
Given any symmetric rank-one tensor $A$, since $d$ is even, $\varphi(A) \ne 0$. Therefore $\widehat{X}(\RR) \setminus \{0\}$ is a disjoint union of $\varphi^{-1}((-\infty, 0))$ and $\varphi^{-1}((0, + \infty))$; we will these two sets are path-connected, which implies the set of symmetric rank-one real tensors has two connected components. 

For any nonzero $ u \in V$, $\varphi(u^{\otimes d}) > 0$ since $d$ is even. Thus if $A \in \varphi^{-1}((0, + \infty))$, then $A$ is of the form $u^{\otimes d}$ for some $u \ne 0$. If $A \in \varphi^{-1}((-\infty, 0))$, then $A$ is of the form $\lambda u^{\otimes d}$ for some $u \ne 0$ and $\lambda < 0$. We next show that for any nonzero vectors $u, v \in V$ and negative scalars $\lambda, \mu < 0$, there is a continuous curve $\gamma (t) \subseteq \widehat{X}(\RR) \setminus \{0\}$ connecting $u^{\otimes d}$ and $v^{\otimes d}$, and a continuous curve $\theta (t) \subseteq \widehat{X}(\RR) \setminus \{0\}$ connecting $\lambda u^{\otimes d}$ and $\mu v^{\otimes d}$. The existence of $\gamma$ and $\theta$ respectively implies that  $\varphi^{-1}((0, + \infty))$ and $\varphi^{-1}((-\infty, 0))$ are path-connected sets.

If  $u, v \in V$ are linearly independent, then the continuous curve
\[
\gamma (t) \coloneqq \bigl(tu + (1 - t)v\bigr)^{\otimes d}
\]
connects $u^{\otimes d}$ and $v^{\otimes d}$. If $u$ and $v$ are linearly dependent, say $v = \alpha u$ for some $\alpha \ne 0$, then we pick some $w \in V$ such that $u$ and $w$ are linearly independent, and let
\[
\gamma(t) \coloneqq 
\begin{cases}
\bigl(2t w + (1 - 2t) u\bigr)^{\otimes d} & 0 \le t \le 1/2,\\
\bigl(2(1 - t) w + (2t - 1) \alpha u\bigr)^{\otimes d} & 1/2 \le t \le 1,
\end{cases}
\]
then $\gamma$ connects $u^{\otimes d}$ and $v^{\otimes d}$. Thus $\varphi^{-1}((0, + \infty))$ is path-connected.

Now consider $\lambda u^{\otimes d}$ and $\mu v^{\otimes d}$ for some $\lambda, \mu < 0$. Since there is a continuous curve $\gamma (t) \subseteq \widehat{X}(\RR) \setminus \{0\}$ such that $\gamma (0) = u^{\otimes d}$ and $\gamma (1) = v^{\otimes d}$, the curve
\[
\theta (t) = \bigl(t \mu + (1 - t)\lambda \bigr) \gamma (t)
\]
connects $\theta (0) = \lambda u^{\otimes d}$ and $\theta (1) = \mu v^{\otimes d}$. Thus $\varphi^{-1}((- \infty, 0))$ is path-connected. \qedhere
\end{enumerate}
\end{proof}

A celebrated result due to Alexander and Hirschowitz \cite{AH95:JAG} (see also  \cite{BrambillaOttaviani08:jpaa} for a simplified proof) shows that if $r < \binom{n+d-1}{d}/n$, then $X =\nu_d (\PP W)$ is not $r$-defective. Since $\sigma_r (X) \in \mathscr{A} (\PP \mathsf{S}^d (W))$, $X(\RR)$ is not $r$-defective either. This allows us to deduce the following about $Z = s_r^{-1} \bigl(s_{r-1}(X(\RR))\bigr)$ in \eqref{eq:ZY}.

\begin{proposition}\label{prop: realsymrnkc}
Let $n > 2$ and $r < \binom{n+d-1}{d}/n$. Then
\[
\operatorname{codim}_{\RR} \bigl(s_r^{-1}(s_{r-1}(X)(\RR)), (\widehat{X}(\RR) \setminus \{0\})^r\bigr) > 1.
\]
\end{proposition}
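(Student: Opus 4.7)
The plan is to feed the general inequality \eqref{eq:tanspsdeteq} derived in the text into the real Veronese setting $X(\RR) = \nu_d(\PP V)$, and to control the two tangent-space dimensions that appear there via non-defectivity of the Veronese.

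First I would invoke the Alexander--Hirschowitz theorem: under the hypothesis $r < \binom{n+d-1}{d}/n$, the complex Veronese $X = \nu_d(\PP W)$ is not $r$-defective, and since the same bound holds for $r-1$ it is not $(r-1)$-defective either. Writing $n = \dim_{\RR} V = \dim_{\CC} W$, so that $m = n$ in Definition~\ref{def:defect}, non-$r$-defectivity yields $\dim_{\CC} \sigma_r(X) = rn - 1$ since $rn \le \binom{n+d-1}{d}$. The identity $\sigma_r(X(\RR)) = (\sigma_r(X))(\RR)$ from \cite{QCL16, BJ17:dga}, combined with the Zariski density of $X(\RR)$ in $X$ (the Veronese has real nonsingular points), then implies that the real dimension of $\sigma_r(X(\RR))$ equals the complex dimension of $\sigma_r(X)$. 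Hence $X(\RR)$ is also non-$r$-defective over $\RR$, giving $\dim_{\RR} \widehat{\sigma}_r(X(\RR)) = rn$ and $\dim_{\RR} \widehat{\sigma}_{r-1}(X(\RR)) = (r-1)n$.

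Next I would apply the semialgebraic Terracini's lemma \cite[Lemma~12]{QCL16} at general real points $x_1, \dots, x_r \in \widehat{X}(\RR) \setminus \{0\}$ to convert these affine-cone dimensions into dimensions of sums of tangent spaces, obtaining
\[
\dim_{\RR}\bigl(\mathsf{T}_{x_1}\widehat{X}(\RR) + \cdots + \mathsf{T}_{x_r}\widehat{X}(\RR)\bigr) = rn
\]
and likewise $\dim_{\RR}\bigl(\mathsf{T}_{x_1}\widehat{X}(\RR) + \cdots + \mathsf{T}_{x_{r-1}}\widehat{X}(\RR)\bigr) = (r-1)n$. Substituting both identities into \eqref{eq:tanspsdeteq}, applied with $\FF = \RR$ and the variety there taken to be $X(\RR)$, gives $rn \le k + (r-1)n$, so $k \ge n$. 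Since $n > 2$ this yields $k \ge 3 > 1$, which is precisely the desired codimension inequality.

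The main point requiring care is that all three ingredients -- the Alexander--Hirschowitz non-defectivity bound, the identity $\sigma_r(X(\RR)) = (\sigma_r(X))(\RR)$, and the semialgebraic Terracini's lemma -- must be applied to the real form $X(\RR)$ rather than $X$, so that the derivation producing \eqref{eq:tanspsdeteq} may be reused verbatim over $\RR$. Once those three are in place the argument reduces to a dimension count and I do not anticipate any further obstacle.
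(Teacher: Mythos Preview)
Your argument has a genuine gap at the point where you substitute into \eqref{eq:tanspsdeteq}. Recall how that inequality was derived: $x_1,\dots,x_{r-1}$ are general in $\widehat{X}(\RR)$, but $x_r$ is chosen to be a general point of the subvariety $Y\subseteq\widehat{X}(\RR)$, not a general point of $\widehat{X}(\RR)$ itself. Terracini's lemma only guarantees
\[
\dim_{\RR}\bigl(\mathsf{T}_{x_1}\widehat{X}(\RR)+\cdots+\mathsf{T}_{x_r}\widehat{X}(\RR)\bigr)=rn
\]
when \emph{all} of $x_1,\dots,x_r$ are general in $\widehat{X}(\RR)$. For $x_r$ constrained to $Y$, semicontinuity only yields the inequality $\le rn$ on the left-hand side, which is useless for your purpose: you need a \emph{lower} bound there to force $k\ge n$. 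Without knowing a priori that $Y$ is not contained in the locus where the tangent-space sum drops dimension, you cannot conclude $k\ge n$ from \eqref{eq:tanspsdeteq} alone.

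The paper circumvents this precisely by passing to the complex side and reinterpreting $\dim\bigl(\mathsf{T}_{x_1}\widehat{X}+\cdots+\mathsf{T}_{x_r}\widehat{X}\bigr)$ as the Hilbert function $h_{\PP W}(\mathcal{C},d)$ of the scheme of double points $\mathcal{C}=\{[v_1]^2,\dots,[v_r]^2\}$. This interpretation supplies the missing lower bound: even when $v_r$ is only general on a hypersurface $\widetilde{Y}$, one still has $h_{\PP W}(\mathcal{C},d)\ge n(r-1)+(n-1)$, because the double point $[v_r]^2$ contributes at least $n-1$ to the degree of $\mathcal{C}$ in this configuration. That lower bound, combined with the upper bound $\le 1+n(r-1)$ from \eqref{eq:tanspsdeteq}, is what produces the contradiction. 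Your dimension count via non-defectivity handles the generic situation cleanly, but the whole difficulty of the proposition lies in controlling the tangent-sum dimension at the \emph{special} point $x_r\in Y$, and for that the double-point/Hilbert-function machinery (or something equivalent) is needed.
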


\begin{proof}
In fact we will show that $\operatorname{codim}_{\RR} (s_r^{-1}(\widehat{\sigma}_{r-1}(X)(\RR)), (\widehat{X}(\RR) \setminus \{0\})^r) > 1$, which clearly implies the required result.
Suppose not, then $s_r^{-1}(\widehat{\sigma}_{r-1}(X)(\RR))$ is a hypersurface. For given general $v_1, \dots, v_{r-1} \in V$, the set $Y$ in \eqref{eq:ZY} takes form
\[
Y = \{v\in V \colon v_1^{\otimes d} + \dots + v_{r-1}^{\otimes d} + v^{\otimes d} \in \widehat{\sigma}_{r-1}(X)(\RR)\},
\]
which is an affine variety. If $s_r^{-1}(\widehat{\sigma}_{r-1}(X)(\RR))$ is a hypersurface, then $Y$ is a hypersurface in $V$, and therefore defined by the vanishing of a single real homogeneous polynomial $h$. Let $Y(\CC) \subseteq W$ be the complex hypersurface defined by $h$. Since $r < r_g(X)$, and $v_1, \dots, v_{r-1}$ are general, $Y(\CC)$  is contained in
\[
\widetilde{Y} \coloneqq \{v\in W \colon v_1^{\otimes d} + \dots + v_{r-1}^{\otimes d} + v^{\otimes d} \in \widehat{\sigma}_{r-1}(X)\},
\]
and thus $\widetilde{Y}$ must have codimension at most one.  We will see that this leads to a contradiction.

Given a nonzero vector $w \in W$, let
\[
\mathfrak{m}_{[w]} \coloneqq \Bigl\{f\in \bigoplus\nolimits_{k=0}^{\infty} \mathsf{S}^k (W^* ) \colon f(w) = 0 \Bigr\}
\]
be the maximal ideal of $[w] \in \PP W$, the point corresponding to $w$ in projective space. Recall that a scheme is called a \emph{double point} if it is defined by the ideal $\mathfrak{m}_{[w]}^2$ for some $w$, and we denote such a double point by $[w]^2$.

For a vector subspace $Q \subseteq \mathsf{S}^d(W)$, its dual space is given by
\[
Q^{\perp} \coloneqq \{f\in \mathsf{S}^d(W^*) \colon f(u) = 0 \text{ for all } u \in Q\}.
\]
A classical result \cite{Lasker04:ma} stated in modern language says that
\[
(\mathsf{T}_{[v^{\otimes d}]} \widehat{X})^{\perp} = \mathsf{S}^d (W^*) \cap \mathfrak{m}^2_{[v]}.
\]
Let $\mathcal{C} = \{[v_1]^2, \dots, [v_r]^2\}$ be a set of double points. Then by Terracini's lemma \cite{Terracini1911}, the degree-$d$ piece of the ideal of $\mathcal{C}$, denoted by $I_{\mathcal{C}}(d)$, equals $\bigl(\mathsf{T}_{[v_1^{\otimes d}]} \widehat{X} + \dots + \mathsf{T}_{[v_r^{\otimes d}]} \widehat{X}\bigr)^{\perp}$. Thus
\[
\codim_{\CC} \bigl(I_{\mathcal{C}}(d), \mathsf{S}^d (W^*)\bigr) = \dim_\CC \bigl(\mathsf{T}_{[v_1^{\otimes d}]}\widehat{X} + \dots +  \mathsf{T}_{[v_r^{\otimes d}]}\widehat{X}\bigr).
\]
The codimension $\codim_{\CC} \bigl(I_{\mathcal{C}}(d), \mathsf{S}^d (W^*)\bigr)$ is  in fact the \emph{Hilbert function} of $\mathcal{C}$ evaluated at $d$, and is denoted by $h_{\PP W} (\mathcal{C}, d)$. The result of Alexander and Hirschowitz \cite{AH95:JAG} then implies that for $r < \binom{n+d-1}{d}/{n}$ general double points, we have $h_{\PP W} (\mathcal{C}, d) = nr$. In our case, since $[v_1]^2, \dots, [v_{r-1}]^2$ are general, and $v_r$ is on a hypersurface  $\widetilde{Y}$,  we get that
\begin{equation}\label{eq:degZ}
h_{\PP W} (\mathcal{C}, d) = \deg (\mathcal{C}) = n(r-1) + \deg ([v_r]^2) \ge n(r-1) + (n-1).
\end{equation}
By \eqref{eq:tanspsdeteq}, we obtain
\[
\begin{aligned}
h_{\PP W} (\mathcal{C}, d) &= \dim_\CC \bigl(\mathsf{T}_{[v_1^{\otimes d}]}\widehat{X} + \dots +  \mathsf{T}_{[v_r^{\otimes d}]}\widehat{X}\bigr) \\
&\le \codim_{\CC} (\widetilde{Y}, W) + \dim_\CC \bigl(\mathsf{T}_{[v_1^{\otimes d}]}\widehat{X} + \dots + \mathsf{T}_{[v_{r-1}^{\otimes d}]}\widehat{X}\bigr) \le 1 + n(r-1),
\end{aligned}
\]
which contradicts \eqref{eq:degZ}.
\end{proof}

We are in a position to address the path-connectedness of symmetric tensor rank over $\RR$. 
\begin{theorem}[Connectedness of symmetric rank-$r$ real symmetric tensors]\label{thm:real symmetric rank rank}
Let $V$ be a real vector space of dimension $n > 2$ and $r < \binom{n+d-1}{d}/n$.
\begin{enumerate}[\upshape (i)]
\item \label{pt:oddsymrkcon} When $d$ is odd, the set of symmetric rank-$r$ real tensors
\[
\{A \in \mathsf{S}^d (V) : \srank(A) = r \}
\]
is a path-connected set.
\item When $d$ is even, the set of symmetric rank-$r$ real tensors
\[
\{A \in \mathsf{S}^d (V) : \srank(A) = r \}
\]
has $r+1$ connected components.
\end{enumerate}
\end{theorem}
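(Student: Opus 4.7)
The plan is to combine Proposition~\ref{prop:sr1oddeven}'s description of symmetric rank-one tensors with the codimension estimate of Proposition~\ref{prop: realsymrnkc} via Theorem~\ref{thm:removing a semialgebraicsubset}. Set $X = \nu_d(\PP W)$ with $W = V \otimes_{\RR} \CC$; then $\widehat{X}(\RR) \setminus \{0\}$ is a smooth real manifold, and $R_r \coloneqq \{A \in \mathsf{S}^d(V) : \srank(A) = r\}$ is the image under $s_r$ of $(\widehat{X}(\RR) \setminus \{0\})^r \setminus s_r^{-1}(s_{r-1}(X)(\RR))$. For part~(i), Proposition~\ref{prop:sr1oddeven}(i) makes $\widehat{X}(\RR) \setminus \{0\}$ path-connected, hence so is its $r$-fold product; Proposition~\ref{prop: realsymrnkc} asserts $s_r^{-1}(s_{r-1}(X)(\RR))$ has real codimension at least $2$, so Theorem~\ref{thm:removing a semialgebraicsubset} with $n=2$ preserves $\pi_0$ after removal. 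The complement is thus path-connected, and its continuous image under $s_r$ is $R_r$, giving the claim.

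For part~(ii), Proposition~\ref{prop:sr1oddeven}(ii) splits $\widehat{X}(\RR) \setminus \{0\}$ into two components $U^+$ and $U^-$, consisting respectively of positive and negative real scalar multiples of $u^{\otimes d}$. The product $(\widehat{X}(\RR) \setminus \{0\})^r$ then has $2^r$ components $U^{\epsilon_1} \times \cdots \times U^{\epsilon_r}$ indexed by sign sequences $\epsilon \in \{\pm\}^r$, each of which remains path-connected after removing $s_r^{-1}(s_{r-1}(X)(\RR))$ by the same codimension argument used in part~(i). Because $s_r$ is permutation-invariant, components whose sign sequences are permutations of one another have the same image in $R_r$; grouping by the count $k \in \{0, 1, \ldots, r\}$ of minus signs yields $r+1$ path-connected subsets $R_{r,0}, \ldots, R_{r,r}$ whose union is $R_r$, so $R_r$ has \emph{at most} $r+1$ connected components.

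The main obstacle is showing the $R_{r,k}$ are pairwise disjoint, equivalently, that the multiset of signs is a well-defined invariant on $R_r$. I expect this to follow from generic identifiability of symmetric $d$-tensor decompositions in the Alexander--Hirschowitz range $r < \binom{n+d-1}{d}/n$: by results of Chiantini--Ciliberto--Ottaviani and Galuppi--Mella type, a generic rank-$r$ symmetric tensor with $d \geq 3$ admits a decomposition unique up to permutation of terms, so that for even $d$ the multiset of signs, and in particular $k$, is a well-defined invariant on a Zariski-dense open subset $R_r^\circ \subseteq R_r$. Continuity then makes $k$ locally constant on $R_r^\circ$, separating it into $r+1$ clopen strata. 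To propagate the separation to all of $R_r$, one combines path-connectedness of each $R_{r,k}$ (already established) with the positive codimension of the non-identifiable locus in each stratum: any hypothetical path joining $R_{r,j}$ and $R_{r,k}$ for $j \ne k$ could be perturbed to lie in $R_r^\circ$, contradicting local constancy of $k$ there. The low-order cases $d = 2$ (where the claim reduces to Sylvester's law of inertia) and small $r$ (where one may appeal directly to explicit apolarity arguments) will likely have to be handled separately.
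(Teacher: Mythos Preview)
Your argument for part~(i) is exactly the paper's.

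For part~(ii), the stratification by sign count and the path-connectedness of each stratum via the codimension estimate also match the paper (which calls the strata $\mathcal{O}_i$ and parametrizes them by maps $\Sigma_i \colon (V\setminus\{0\})^r \to \mathsf{S}^d(V)$, but the argument is the same as restricting $s_r$ to the sign-components you describe). Where you diverge is in proving the strata are disjoint. Your route---generic identifiability plus a perturbation argument---is, as you acknowledge, incomplete: symmetric identifiability in the full range $r < \binom{n+d-1}{d}/n$ is not established uniformly for all $(n,d)$, and the step ``any hypothetical path \ldots could be perturbed to lie in $R_r^\circ$'' is only a heuristic; making it rigorous would require a transversality or density argument you have not supplied.

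The paper dispatches disjointness in one sentence: it asserts that the pair $(i, r-i)$ is $\GL(V)$-invariant, hence $\mathcal{O}_i \cap \mathcal{O}_j = \varnothing$ for $i \ne j$. This avoids identifiability and perturbation entirely, though the paper does not elaborate on why the signature is intrinsic to $A$ rather than to a chosen decomposition---the content is a Sylvester-inertia-type statement for even-degree real Waring decompositions, stated rather than argued. So the paper's route is much shorter and more direct; your route, even if the identifiability and perturbation steps were filled in, would be a substantial detour to the same conclusion.
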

\begin{proof}
\begin{enumerate}[\upshape (i)]
\item The statement follows from Proposition~\ref{prop: realsymrnkc}, Theorem~\ref{thm:removing a semialgebraicsubset}, Proposition~\ref{prop:sr1oddeven}, and the fact that the image of a path-connected set under a continuous map is path-connected.
\item For each $i \in \{0, \dots, r\}$, let
\[
\mathcal{O}_i \coloneqq \{A = v_1^{\otimes d} + \cdots + v_i^{\otimes d} - v_{i+1}^{\otimes d} - \cdots - v_r^{\otimes d} \colon v_1, \dots, v_r \in V, \rank_{\mathsf{S}}(A) = r\}.
\]
Note that the pair of numbers $(i, r-i)$ associated to $\mathcal{O}_i$ is $\operatorname{GL}(V)$-invariant. Hence $\mathcal{O}_i \cap \mathcal{O}_j = \emptyset$ when $i \ne j$. For each $i \in \{0, \dots, r\}$, define the map $\Sigma_i$ by
\[
\Sigma_i \colon (V \setminus \{0\})^r \to \mathsf{S}^r (V), \quad (v_1, \dots, v_r) \mapsto v_1^{\otimes d} + \cdots + v_i^{\otimes d} - v_{i+1}^{\otimes d} - \cdots - v_r^{\otimes d}.
\]
Let $D_r \coloneqq \{A \in \mathsf{S}^r (V) \colon \rank_{\mathsf{S}}(A) < r\}$. By a similar argument of Proposition~\ref{prop: realsymrnkc}, 
\[
\operatorname{codim}_{\RR} \bigl(\Sigma_i^{-1}(\operatorname{im} (\Sigma_{i}) \cap D_r), (V \setminus \{0\})^i\bigr) > 1.
\]
Thus by Theorem~\ref{thm:removing a semialgebraicsubset} and the fact that the image of a path-connected set under a continuous map is path-connected, $\mathcal{O}_i$ is path-connected. Since
\[
\{A \in \mathsf{S}^d (V) : \srank(A) = r \} = \bigcup_{0 \le i \le r} \mathcal{O}_i,
\]
$\{A \in \mathsf{S}^d (V) : \srank(A) = r \}$ has $r + 1$ connected components. \qedhere
\end{enumerate}
\end{proof}

Since for any symmetric border rank-$r$ tensor $B\in \mathsf{S}^d (V)$, there is a continuous curve $\gamma \colon [0, 1] \to \mathsf{S}^d (V)$ with $\gamma (0) =B$ and $\gamma (t) \subseteq \{A \in \mathsf{S}^d (V) : \srank(A) = r \}$ for $t \in (0, 1]$, we obtain the  border rank analogue of Theorem~\ref{thm:real symmetric rank rank}.
\begin{theorem}[Connectedness of symmetric border rank-$r$ real symmetric tensors]\label{thm:real symmetric border rank rank}
Let $V$ be a real vector space of dimension $n > 2$ and $r < \binom{n+d-1}{d}/n$.
\begin{enumerate}[\upshape (i)]
\item When $d$ is odd, the set of symmetric border rank-$r$ real tensors
\[
\{A \in \mathsf{S}^d (V) : \sbrank(A) = r \}
\]
is a path-connected set.
\item When $d$ is even, the set of symmetric border rank-$r$ real tensors
\[
\{A \in \mathsf{S}^d (V) : \sbrank(A) = r \}
\]
has $r+1$ connected components.
\end{enumerate}
\end{theorem}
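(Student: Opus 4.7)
The plan is to transfer the path-connectedness results for symmetric rank-$r$ tensors (Theorem~\ref{thm:real symmetric rank rank}) to symmetric border-rank-$r$ tensors via the semialgebraic curve selection lemma, exactly as indicated in the paragraph preceding the theorem statement. The key input is: for any $B \in \mathsf{S}^d(V)$ with $\sbrank(B) = r$, we have $B \in \overline{\{A : \srank(A) \le r\}} \setminus \overline{\{A : \srank(A) \le r-1\}}$, whence $B \in \overline{\{A : \srank(A) = r\}}$. By curve selection for semialgebraic sets \cite{BCR13}, there exists a continuous semialgebraic curve $\gamma \colon [0, 1] \to \mathsf{S}^d(V)$ with $\gamma(0) = B$ and $\gamma((0, 1]) \subseteq \{A : \srank(A) = r\}$. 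Since $\{A : \sbrank(A) \le r-1\}$ is closed and $B$ lies outside it, after shrinking $[0,1]$ we may further assume that $\gamma([0,1]) \subseteq \{A : \sbrank(A) = r\}$ throughout.

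For (i), when $d$ is odd, given two border-rank-$r$ tensors $B_1, B_2$, extract curves $\gamma_1, \gamma_2$ as above, whose endpoints $\gamma_i(1)$ lie in the set $S \coloneqq \{A : \srank(A) = r\} \cap \{A : \sbrank(A) = r\}$. Under the Alexander--Hirschowitz bound, $X(\RR) = \nu_d(\PP V)$ is not $r$-defective, so $\{A : \sbrank(A) \le r-1\}$ has real codimension at least $n \ge 3$ inside the rank-$r$ manifold; hence $S$ is obtained from $\{A : \srank(A) = r\}$ by removing a semialgebraic subset of real codimension at least $2$. Theorem~\ref{thm:removing a semialgebraicsubset} combined with Theorem~\ref{thm:real symmetric rank rank}(i) then shows that $S$ is path-connected. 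Concatenating $\gamma_1$, a path in $S$ from $\gamma_1(1)$ to $\gamma_2(1)$, and the reverse of $\gamma_2$ produces the desired path from $B_1$ to $B_2$ inside $\{A : \sbrank(A) = r\}$.

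For (ii), when $d$ is even, the same reasoning, applied componentwise to the $r+1$ path-connected components $\mathcal{O}_0, \dots, \mathcal{O}_r$ of $\{A : \srank(A) = r\}$ from the proof of Theorem~\ref{thm:real symmetric rank rank}(ii), shows that $S_i \coloneqq \mathcal{O}_i \cap \{A : \sbrank(A) = r\}$ is path-connected for each $i$, and that every border-rank-$r$ tensor is connected inside $\{A : \sbrank(A) = r\}$ to some $S_i$. This yields at most $r+1$ connected components. The main obstacle is the matching lower bound: one must prove that distinct $S_i, S_j$ cannot lie in the same connected component of $\{A : \sbrank(A) = r\}$. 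I would establish this by showing $\overline{\mathcal{O}_i} \cap \overline{\mathcal{O}_j} \subseteq \{A : \sbrank(A) \le r-1\}$ for $i \ne j$, equivalently that the sign-pattern invariant $(i, r-i)$ which separates the $\mathcal{O}_i$ extends to a locally constant function on $\{A : \sbrank(A) = r\}$; this should follow from the essential uniqueness of symmetric rank decompositions in the Alexander--Hirschowitz regime $r < \binom{n+d-1}{d}/n$, combined with a Hilbert function / double points analysis in the spirit of Proposition~\ref{prop: realsymrnkc}.
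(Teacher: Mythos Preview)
Your approach via the curve selection lemma is exactly the paper's route---the entire proof there consists of the single sentence preceding the theorem statement---and you have filled in details the paper leaves implicit. In particular, you correctly ensure that the connecting curves stay inside $\{\sbrank = r\}$ by invoking the codimension argument: under the non-defectivity hypothesis, $\overline{s}_{r-1}(X(\RR))$ has real codimension $n \ge 3$ inside $\overline{s}_r(X(\RR))$, so removing it from each $\mathcal{O}_i$ (or from the whole rank-$r$ locus in case~(i)) does not disconnect anything. This care is genuinely needed, since rank-$r$ tensors of border rank strictly less than $r$ do occur in the Alexander--Hirschowitz range (e.g.\ $v^{d-1}w$ for $d$ odd and $n\ge 3$), so the paper's tacit identification of the rank-$r$ and border-rank-$r$ loci is not literally correct.

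For part~(ii) you correctly isolate the one step that neither your argument nor the paper's terse proof supplies: the \emph{lower} bound, i.e., that the $r+1$ pieces $S_i$ lie in pairwise distinct components of $\{\sbrank = r\}$. The curve-selection reduction only yields at most $r+1$ components. Your proposed strategy---extending the sign pattern $(i, r-i)$ to a locally constant function on $\{\sbrank = r\}$, equivalently showing $\overline{\mathcal{O}_i} \cap \overline{\mathcal{O}_j} \subseteq \{\sbrank \le r-1\}$ for $i \ne j$---is the natural one, but it is not carried out in the paper either, so on this point your proposal already matches the level of completeness of the published argument.
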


\subsection{Path-connectedness of real tensor rank and real border rank}

We next turn our attention to unsymmetric tensors, i.e., $X = \Seg (\PP W_1 \times \dots \times \PP W_d)$ and $X(\RR) = \Seg (\PP V_1 \times \dots \times \PP V_d)$. 
As in the case of symmetric tensors, we first address the rank-one case, i.e., the path-connectedness of $\widehat{X}(\RR) \setminus \{0\}$, and later generalize it to arbitrary rank. Note that the set of rank-one tensors and the set of border rank-one tensors are equal.

\begin{proposition}\label{prop:rr1pc}
The set of rank-one real tensors
\[
\{ A \in V_1 \otimes \dots \otimes V_d : \rank (A) =1\} = \{ A \in V_1 \otimes \dots \otimes V_d : \brank (A) =1\}
\]
is path-connected.
\end{proposition}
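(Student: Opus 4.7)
The plan is to realize the rank-one locus as the image of a continuous map from a manifestly path-connected space, and then to argue separately that border rank-one coincides with rank-one in the unsymmetric setting (which explains the equality asserted in the statement).

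First I would write
\[
\sigma \colon (V_1 \setminus \{0\}) \times \cdots \times (V_d \setminus \{0\}) \to V_1 \otimes \cdots \otimes V_d, \quad (v_1,\dots,v_d) \mapsto v_1 \otimes \cdots \otimes v_d,
\]
and observe that, by the very definition of tensor rank, $\im(\sigma) = \{A : \rank(A) = 1\}$. Since every $V_i$ is assumed to have real dimension at least two (the standing assumption recalled at the end of Section~2), each factor $V_i \setminus \{0\}$ is homeomorphic to $\RR^{n_i} \setminus \{0\}$ with $n_i \ge 2$ and is therefore path-connected. A finite product of path-connected spaces is path-connected, and $\sigma$ is continuous, so $\im(\sigma)$ is path-connected.

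The reason that the symmetric dichotomy (parity of $d$) does not appear here is worth highlighting: any sign can be absorbed into a single factor of the tensor product, so $-v_1\otimes v_2\otimes\cdots\otimes v_d = (-v_1)\otimes v_2\otimes\cdots\otimes v_d$ is literally reached by a path in the domain, whereas $-v^{\otimes d}$ for even $d$ is not of the form $w^{\otimes d}$ at all.

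For the equality of rank-one and border-rank-one sets, I would note that $\{A \in V_1\otimes\cdots\otimes V_d : \rank(A) \le 1\}$ is the affine cone over the real Segre variety $\Seg(\PP V_1 \times \cdots \times \PP V_d)$. It is cut out by the vanishing of the $2\times 2$ minors of all flattenings of $A$, hence is Zariski closed and in particular Euclidean closed. Therefore $\brank(A) \le 1$ forces $\rank(A) \le 1$, and since nonzero tensors have rank and border rank at least one, the sets $\{A : \rank(A) = 1\}$ and $\{A : \brank(A) = 1\}$ coincide, completing the proof.

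There is no real obstacle here; the only subtle point to get right is to invoke the dimension hypothesis $\dim V_i \ge 2$, without which a one-dimensional factor would split the domain into two components and the argument would collapse into the symmetric situation studied in Proposition~\ref{prop:sr1oddeven}.
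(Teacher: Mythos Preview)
Your proof is correct and follows essentially the same idea as the paper's: the paper constructs explicit paths in the rank-one locus, and those paths are precisely the images under your map $\sigma$ of straight-line (or two-leg) paths in the path-connected domain $(V_1\setminus\{0\})\times\cdots\times(V_d\setminus\{0\})$. Your abstract packaging is cleaner, and you additionally justify the equality of the rank-one and border-rank-one sets via closedness of the Segre cone, which the paper simply asserts in the statement.
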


\begin{proof}
It suffices to consider the following two cases.

\textsc{Case I:} Consider rank-one tensors $A = u_1 \otimes \cdots \otimes u_j \otimes u_{j+1} \otimes \cdots \otimes u_d$ and $B = v_1 \otimes \cdots \otimes v_j \otimes u_{j+1} \otimes \cdots \otimes u_d$ for some $j \in \{1, \dots, d\}$, where $u_i$ and $v_i$ are linearly independent for each $i \in \{1, \dots, j\}$. A continuous curve $\gamma (t) \subseteq \widehat{X}(\RR) \setminus \{0\}$ such that $\gamma(0) = A$ and $\gamma (1) = B$ is given by
\[
\gamma (t) \coloneqq \bigl(tv_1 + (1-t)u_1\bigr) \otimes \cdots \otimes \bigl(tv_j + (1-t)u_j\bigr) \otimes u_{j+1} \otimes \cdots \otimes u_d.
\]

\textsc{Case II:} Consider rank-one tensors $A = u_1 \otimes \cdots \otimes u_d$ and $B = \lambda u_1 \otimes \cdots \otimes u_d$ with $\lambda < 0$. We need a continuous curve $\gamma (t) \subseteq \widehat{X}(\RR) \setminus \{0\}$ such that $\gamma(0) = A$ and $\gamma (1) = B$. Choose some $w_1 \in V_1$ such that $u_1$ and $w_1$ are linearly independent. Then $\gamma$ may be defined by
\[
\gamma(t) \coloneqq 
\begin{cases}
\bigl(2t w_1 + (1 - 2t) u_1\bigr) \otimes u_2 \otimes \cdots \otimes u_d & \text{if}\; 0 \le t \le 1/2,\\
\bigl(2(1 - t) w_1 + (2t - 1) \lambda u_1\bigr) \otimes u_2 \otimes \cdots \otimes u_d & \text{if}\; 1/2 \le t \le 1. 
\end{cases}
\]\par \vspace{-1.5\baselineskip}
\qedhere
\end{proof}

Now we address the path-connectedness of the set of rank-$r$ tensors and the set of border-rank-$r$ tensors. Here the condition that $X$ is not $r$-defective in the symmetric case can be slightly weakened and replaced by a condition on the codimension plus the requirement that $r < r_g(X)$.

\begin{theorem}[Connectedness of rank-$r$ and border-rank-$r$ real tensors]\label{thm:rerkrconn}
Let $V_1,\dots,V_d$ be real vector spaces of real dimensions $n_1, \dots, n_d$, where  $2 \le n_1 \le \dots \le n_d$.  Let $r$ be strictly smaller than the complex generic rank. If
\begin{equation}\label{eq:codim}
\codim_{\CC} \bigl(\sigma_{r-1}(X), \sigma_r(X)\bigr) > n_1 + \dots + n_{d-1} - d + 2,
\end{equation}
then the set of real rank-$r$ tensors
\[
\{ A \in V_1 \otimes \dots \otimes V_d : \rank (A) =r\}
\]
and the set of real border rank-$r$ tensors
\[
\{ A \in V_1 \otimes \dots \otimes V_d : \brank (A) =r\}
\]
are path-connected sets. Equivalently, in coordinates, the following  sets of hypermatrices are path-connected:
\[
\{ A \in \RR^{n_1 \times \dots \times n_d} : \rank (A) =r\} \quad \text{and}\quad
\{ A \in \RR^{n_1 \times \dots \times n_d} : \brank (A) =r\}.
\]
\end{theorem}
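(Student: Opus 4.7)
My plan is to follow the template of Theorem~\ref{thm:real symmetric rank rank}(i), replacing the Alexander--Hirschowitz input by the codimension hypothesis \eqref{eq:codim}. Three ingredients are needed: (a) path-connectedness of the real rank-one locus, supplied by Proposition~\ref{prop:rr1pc}; (b) the codimension estimate
\[
\codim_{\RR}\bigl(s_r^{-1}(s_{r-1}(X(\RR))),\, (\widehat{X}(\RR)\setminus\{0\})^r\bigr) \ge 2,
\]
the nonsymmetric analogue of Proposition~\ref{prop: realsymrnkc}; and (c) Theorem~\ref{thm:removing a semialgebraicsubset} combined with continuous surjectivity of $s_r$. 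Given (b), removing a real-codimension-$\ge 2$ semialgebraic subset from $(\widehat{X}(\RR)\setminus\{0\})^r$ preserves path-connectedness, and then pushing forward under $s_r$ delivers path-connectedness of the real rank-$r$ locus.

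The main obstacle is step (b). I would argue by contradiction: assume the real codimension equals $1$. Then the Zariski closure of the slice $Y$ over a general $(x_1,\ldots,x_{r-1})$ extends to a complex algebraic subset $\widetilde{Y}\subseteq\widehat{X}$ of complex codimension at most $1$. Applying \eqref{eq:tanspsdeteq} with $k=1$, combined with semialgebraic Terracini at the general points $x_1,\dots,x_{r-1}$, yields
\[
\dim_{\CC}\bigl(T_{x_1}\widehat{X}+\cdots+T_{x_r}\widehat{X}\bigr) \le \dim_{\CC}\widehat{\sigma}_{r-1}(X)+1
\]
for general $x_r\in\widetilde{Y}$. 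Setting $S=T_{x_1}\widehat{X}+\cdots+T_{x_{r-1}}\widehat{X}$ and $\pi\colon W\to W/S$, the generic value of $\dim_{\CC}\pi(T_y\widehat{X})$ for $y\in\widehat{X}$ is $\dim_{\CC}\widehat{\sigma}_r(X)-\dim_{\CC}\widehat{\sigma}_{r-1}(X)$. Using the Segre tangent-space decomposition
\[
T_y\widehat{X}=\sum_{j=1}^{d}v_1\otimes\cdots\otimes W_j\otimes\cdots\otimes v_d \quad\text{at}\quad y=v_1\otimes\cdots\otimes v_d,
\]
a factor-by-factor Schubert-type calculation bounds the codimension in $\widehat{X}$ of the rank-drop locus $\{y:\dim_{\CC}\pi(T_y\widehat{X})\le 1\}$ below by $\dim_{\CC}\widehat{\sigma}_r(X) - \dim_{\CC}\widehat{\sigma}_{r-1}(X) - 1$. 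By \eqref{eq:codim} this strictly exceeds $n_1+\cdots+n_{d-1}-d+1$, which in our regime is at least $2$, so $\widetilde{Y}$ (of codimension $\le 1$) cannot be contained in the rank-drop locus. This contradicts the derivation that $\dim_{\CC}\pi(T_{x_r}\widehat{X})\le 1$ for every $x_r\in\widetilde{Y}$.

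The border-rank-$r$ assertion then follows by the reduction already employed between Theorem~\ref{thm:real symmetric rank rank} and Theorem~\ref{thm:real symmetric border rank rank}: every real border-rank-$r$ tensor is the endpoint of a continuous curve whose interior consists of real rank-$r$ tensors, so path-connectedness transfers. The sharpest step, which I expect to be the main obstacle, is the lower bound on the codimension of the rank-drop locus of $\pi(T_y\widehat{X})$. In the symmetric case this was handled by a Hilbert function identity for double points combined with Alexander--Hirschowitz; for the Segre it must be carried out directly from the multilinear tangent-space structure. The numerical threshold $n_1+\cdots+n_{d-1}-d+2$ in \eqref{eq:codim} equals the dimension of the affine cone over $\Seg(\PP W_1\times\cdots\times\PP W_{d-1})$, and is precisely calibrated to make this lower bound succeed.
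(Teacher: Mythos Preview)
Your overall architecture is right: reduce to showing
\[
\codim_{\RR}\bigl(s_r^{-1}(\widehat{\sigma}_{r-1}(X)(\RR)),\,(\widehat{X}(\RR)\setminus\{0\})^r\bigr)\ge 2,
\]
then invoke Proposition~\ref{prop:rr1pc}, Theorem~\ref{thm:removing a semialgebraicsubset}, and the border-rank reduction used after Theorem~\ref{thm:real symmetric rank rank}. The gap is the step you yourself flag as the main obstacle. The assertion that the rank-drop locus $\{y\in\widehat{X}:\dim_{\CC}\pi(T_y\widehat{X})\le 1\}$ has codimension at least $c-1$ in $\widehat{X}$, where $c=\codim_{\CC}(\sigma_{r-1}(X),\sigma_r(X))$, is not justified by a ``factor-by-factor Schubert-type calculation.'' Even granting the Schubert codimension in the Grassmannian, you would still need transversality of the Gauss map $y\mapsto T_y\widehat{X}$ to the relevant Schubert cycle, and you have not argued this. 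Without that bound the contradiction does not close.

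The paper's proof avoids this entirely by exploiting the ruling of the Segre variety in the last factor. Instead of taking $Y$ inside $\widehat{X}$, one fixes general $x_1,\dots,x_{r-1}\in\widehat{X}(\RR)$ and general $v_1\in V_1,\dots,v_{d-1}\in V_{d-1}$, and sets
\[
Y=\{v\in V_d: v_1\otimes\cdots\otimes v_{d-1}\otimes v + x_1+\cdots+x_{r-1}\in\widehat{\sigma}_{r-1}(X)(\RR)\}\subseteq V_d.
\]
If the preimage had real codimension one, then $\codim_{\RR}(Y,V_d)=1$. Now take $v_d\in Y$ general and $v\in V_d$ general, set $x_r=v_1\otimes\cdots\otimes v_d$ and $x=v_1\otimes\cdots\otimes v_{d-1}\otimes v$. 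The crucial elementary observation is that both $T_{x_r}\widehat{X}(\RR)$ and $T_x\widehat{X}(\RR)$ contain the $n_d$-dimensional subspace $v_1\otimes\cdots\otimes v_{d-1}\otimes V_d$, so passing from $T_{x_r}$ to $T_x$ adds at most $n_1+\cdots+n_{d-1}-d+1$ dimensions. Combined with \eqref{eq:tanspsdeteq} (with $k\le 1$) and Terracini at the general points, this yields
\[
\dim_{\RR}\widehat{\sigma}_r(X)(\RR)\le \dim_{\RR}\widehat{\sigma}_{r-1}(X)(\RR)+1+(n_1+\cdots+n_{d-1}-d+1),
\]
directly contradicting \eqref{eq:codim}. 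No Schubert analysis of a global rank-drop locus is needed; the comparison happens along a single linear ruling, which is exactly why the threshold in \eqref{eq:codim} is $n_1+\cdots+n_{d-1}-d+2$.
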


\begin{proof}
As in the proofs of Theorems~\ref{thm:real symmetric rank rank} and  \ref{thm:real symmetric border rank rank}, it suffices to show that
\[
\codim_{\RR} \bigl(s_r^{-1} \bigl(\widehat{\sigma}_{r-1}(X)(\RR) \big), \big( \widehat{X}(\RR) \setminus \{0\} \bigr)^{ r} \bigr) > 1.
\]
Suppose not.  Let $x_1, \dots, x_{r-1} \in \widehat{X}(\RR)$ be general points and  $v_1 \in V_1, \dots, v_{d-1} \in V_{d-1}$ be general vectors. We set
\[
Y \coloneqq \{v \in V_d \colon v_1 \otimes \dots \otimes v_{d-1} \otimes v + x_1 + \dots + x_{r-1} \in \widehat{\sigma}_{r-1}(X)(\RR) \}.
\]
Then $\codim_{\RR} (Y, V_d) = 1$. Choose a general $v_d \in Y$ and a general $v \in V_d$. Let $x_r = v_1 \otimes \dots \otimes v_d$ and $x = v_1 \otimes \dots \otimes v_{d-1} \otimes v$. Since the vector space $v_1 \otimes \dots \otimes v_{d-1} \otimes V_d$ is contained in both $\mathsf{T}_{x_r} \widehat{X}(\RR)$ and $\mathsf{T}_{x} \widehat{X}(\RR)$, by \eqref{eq:tanspsdeteq}, we get
\[
\begin{aligned}
\dim_\RR \bigl(\widehat{\sigma}_r(X)(\RR)\bigr) &= \dim_\RR \bigl(\mathsf{T}_{x_1} \widehat{X}(\RR) + \dots + \mathsf{T}_{x_{r-1}} \widehat{X}(\RR) + \mathsf{T}_{x} \widehat{X}(\RR) \bigr) \\
&\le \dim_{\RR} \bigl(\mathsf{T}_{x_1} \widehat{X}(\RR) + \dots + \mathsf{T}_{x_{r}} \widehat{X}(\RR) \bigr) + (n_1 + \dots + n_{d-1} - d + 1) \\
&\le 1 + \dim_{\RR} \bigl(\mathsf{T}_{x_1} \widehat{X}(\RR) + \dots + \mathsf{T}_{x_{r-1}} \widehat{X}(\RR)\bigr) + (n_1 + \dots + n_{d-1} - d + 1) \\
&= 1 + \dim_{\RR}\bigl( \widehat{\sigma}_{r-1}(X)(\RR) \bigr) + (n_1 + \dots + n_{d-1} - d + 1),
\end{aligned}
\]
which contradicts the assumption that $\codim_{\CC} \bigl(\sigma_{r-1}(X), \sigma_r(X)\bigr) > n_1 + \dots + n_{d-1} - d + 2$ as $\dim_{\RR} \bigl(\widehat{\sigma}_{j}(X)(\RR) \bigr)= \dim_{\CC}\bigl( \widehat{\sigma}_{j}(X) \bigr)$ for all $j =1, \dots, r_g (X)$.
\end{proof}

Note that the condition on codimension \eqref{eq:codim} in Theorem~\ref{thm:rerkrconn} is guaranteed whenever   $\Seg (\PP W_1 \times \dots \times \PP W_d)$ is not $r$-defective, i.e., 
\[
\dim_{\CC} \bigl(\sigma_r (\Seg (\PP W_1 \times \dots \times \PP W_d)) \bigr)= \dim_{\CC} \bigl(\sigma_{r-1} (\Seg (\PP W_1 \times \dots \times \PP W_d)) \bigr) + n_1 + \cdots + n_d - d + 1.
\] 
\begin{corollary}[Connectedness of rank-$r$ and border-rank-$r$ real tensors]\label{cor: real brk rk conn nondef}
Let $W_1,\dots,W_d$ be complexifications of the real vector spaces $V_1,\dots, V_d$.
If $\Seg (\PP W_1 \times \dots \times \PP W_d)$ is not $r$-defective, then the sets
\[
\{ A \in V_1 \otimes \dots \otimes V_d : \rank (A) =r\}
\quad\text{and}\quad
\{ A \in V_1 \otimes \dots \otimes V_d : \brank (A) =r\}
\]
are path-connected sets.
\end{corollary}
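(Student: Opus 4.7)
The plan is to reduce Corollary~\ref{cor: real brk rk conn nondef} to Theorem~\ref{thm:rerkrconn} by verifying its codimension hypothesis~\eqref{eq:codim}. Let $X := \Seg(\PP W_1 \times \dots \times \PP W_d)$ and $m := n_1 + \dots + n_d - d + 1 = \dim_\CC \widehat{X}$. Theorem~\ref{thm:rerkrconn} requires $r$ to be strictly less than the complex generic rank $r_g(X)$; this is the implicit range in which the corollary is meant, as the rank-$r$ set is empty for $r > r_g(X)$.

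The core step is a short dimension count. For every $s \ge 1$, the elementary bound $\dim_\CC \sigma_s(X) \le \dim_\CC \sigma_{s-1}(X) + m$ holds, since appending one more summand to a decomposition contributes at most $\dim_\CC \widehat{X}$ to the dimension; iterating from $\sigma_1(X) = X$ gives $\dim_\CC \sigma_s(X) \le sm - 1$ for every $s$. By Definition~\ref{def:defect}, non-$r$-defectivity together with $r < r_g(X)$ upgrades the $s = r$ inequality to equality, $\dim_\CC \sigma_r(X) = rm - 1$. Substituting back into the one-step bound forces $\dim_\CC \sigma_{r-1}(X) \ge (r-1)m - 1$, and combined with the reverse inequality this yields $\dim_\CC \sigma_{r-1}(X) = (r-1)m - 1$. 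Consequently,
\[
\codim_\CC\bigl(\sigma_{r-1}(X), \sigma_r(X)\bigr) = m = n_1 + \dots + n_d - d + 1.
\]

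Since the paper's standing assumption forces $n_i \ge 2$ for every $i$, in particular $n_d \ge 2$, we have
\[
n_1 + \dots + n_d - d + 1 \ge n_1 + \dots + n_{d-1} - d + 3 > n_1 + \dots + n_{d-1} - d + 2,
\]
which is exactly~\eqref{eq:codim}. Theorem~\ref{thm:rerkrconn} then delivers path-connectedness of both $\{A \in V_1 \otimes \dots \otimes V_d : \rank(A) = r\}$ and $\{A \in V_1 \otimes \dots \otimes V_d : \brank(A) = r\}$. No real obstacle arises: the argument is exactly the dimension bookkeeping hinted at in the paragraph immediately preceding the corollary, and non-defectivity supplies precisely $n_d - 1 \ge 1$ units of slack above the threshold in~\eqref{eq:codim}.
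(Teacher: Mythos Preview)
Your argument is correct and follows exactly the route the paper intends: you verify that non-$r$-defectivity forces $\codim_\CC(\sigma_{r-1}(X),\sigma_r(X)) = n_1 + \dots + n_d - d + 1$, which exceeds the threshold in~\eqref{eq:codim} because $n_d \ge 2$, and then invoke Theorem~\ref{thm:rerkrconn}. Your dimension bookkeeping (subadditivity plus the equality from non-defectivity forcing $\dim\sigma_{r-1}(X) = (r-1)m-1$) is in fact more careful than the paper's one-line justification.

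One inaccuracy to fix: your aside that ``the rank-$r$ set is empty for $r > r_g(X)$'' is false over $\RR$ --- real rank can exceed the complex generic rank, as the paper itself illustrates in Proposition~\ref{prop:222rk3compnts}. The restriction $r < r_g(X)$ is genuinely needed (it is a hypothesis of Theorem~\ref{thm:rerkrconn}), and the corollary should simply be read as inheriting that hypothesis, not as vacuous outside it.
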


We would like to point out that determining $r$-defectivity of $\Seg (\PP W_1 \times \dots \times \PP W_d)$, or more generally, the dimension of $\sigma_r (\Seg (\PP W_1 \times \dots \times \PP W_d))$ is a problem that has not been completely resolved (unlike the case of symmetric tensors, where the $r$-defectivity of $\nu_d(\PP U)$ is completely known thanks to the work of Alexander and Hirschowitz). However, there has been remarkable progress in recent years \cite{AOP09:tams, BCO14:ampa, ChiantiniOtt12:simax, COV14:simax} and we know the dimensions (and therefore $r$-defectivity) in many cases. In particular, when $n_d > 3$, all known cases satisfy condition \eqref{eq:codim} of Theorem~\ref{thm:rerkrconn}. It is possible that the condition \eqref{eq:codim} is always satisfied and may be dropped from the theorem.

We conclude this section by showing that the condition  $r < r_g(X)$ cannot be omitted. The reason being that when $r \ge r_g(X)$, we have $\dim s_r(X(\RR)) = \dim s_{r+1}(X(\RR))$, and the set of real (border) rank-$r$ points may have several connected components. We illustrate this with a specific example.

\begin{proposition}\label{prop:222rk3compnts}
The set of real border rank-three $2 \times 2 \times 2$ hypermatrices, i.e.,
\[
\{ A \in \RR^{2 \times 2 \times 2} : \brank(A) = 3\},
\]
has four connected components.
\end{proposition}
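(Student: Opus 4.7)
The plan is to identify the set of real border-rank-three $2 \times 2 \times 2$ hypermatrices with the semialgebraic open set $\{A : \Delta(A) < 0\}$, where $\Delta$ denotes the Cayley hyperdeterminant, and then distinguish its connected components via an invariant extracted from the essentially unique complex rank-two decomposition of such a tensor.

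First I would invoke the classical trichotomy for $\RR^{2 \times 2 \times 2}$ due to de Silva and Lim: if $\Delta(A) > 0$, then both the real rank and the border rank of $A$ equal $2$; if $\Delta(A) = 0$, then the border rank is at most $2$; and if $\Delta(A) < 0$, then $A$ has complex rank $2$ and real rank $3$, and $A$ is not a Euclidean limit of real rank-two tensors. Since the maximal real rank of a $2\times 2\times 2$ tensor is $3$, this yields
\[
\{A \in \RR^{2 \times 2 \times 2} : \brank(A) = 3\} \;=\; \{A \in \RR^{2 \times 2 \times 2} : \Delta(A) < 0\}.
\]
Each such $A$ admits a unique (up to swap) complex rank-two decomposition, which must take the form $A = u \otimes v \otimes w + \bar{u} \otimes \bar{v} \otimes \bar{w}$ for some $u, v, w \in \CC^2$. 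None of $[u], [v], [w]$ can lie in $\PP^1(\RR)\subset \PP^1(\CC)$: if, say, $[u] \in \PP^1(\RR)$ then $u$ is (up to scaling) real and $A = u \otimes (v \otimes w + \bar{v} \otimes \bar{w})$ would have real rank at most $2$, contradicting $\brank(A) = 3$.

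Now $\PP^1(\CC) \setminus \PP^1(\RR)$, topologically the Riemann sphere minus its real equator, has exactly two connected components $\Omega^+$ and $\Omega^-$, interchanged by complex conjugation. Define an invariant
\[
\iota(A) \coloneqq \bigl\{(\epsilon_1, \epsilon_2, \epsilon_3),\; (-\epsilon_1, -\epsilon_2, -\epsilon_3)\bigr\} \in \{+,-\}^3 / \mathbb{Z}_2,
\]
where $\epsilon_i \in \{+,-\}$ records which of $\Omega^{\pm}$ contains the $i$th projective point among $[u], [v], [w]$, and the $\mathbb{Z}_2$-quotient absorbs the swap ambiguity $(u,v,w) \leftrightarrow (\bar{u}, \bar{v}, \bar{w})$. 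This is well-defined and takes $2^3/2 = 4$ values. Since the unordered pair of rank-one summands varies continuously on $\{\Delta \ne 0\}$ and cannot touch $\PP^1(\RR)$ while $\brank(A) = 3$, the map $\iota$ is locally constant, so there are at least four connected components.

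To see that each fiber of $\iota$ is itself connected, fix signs $(\epsilon_1, \epsilon_2, \epsilon_3)$ and consider the continuous map onto the corresponding fiber,
\[
\bigl\{(u,v,w) \in (\CC^2 \setminus \{0\})^3 : [u] \in \Omega^{\epsilon_1},\ [v] \in \Omega^{\epsilon_2},\ [w] \in \Omega^{\epsilon_3}\bigr\} \;\longrightarrow\; \RR^{2 \times 2 \times 2},
\]
sending $(u,v,w)$ to $u \otimes v \otimes w + \bar{u} \otimes \bar{v} \otimes \bar{w}$. The domain is a product of three $\CC^*$-bundles over the contractible half-spheres $\Omega^{\epsilon_i}$, hence connected; by uniqueness of the decomposition up to swap, its image equals precisely $\iota^{-1}\bigl\{(\epsilon_1,\epsilon_2,\epsilon_3),(-\epsilon_1,-\epsilon_2,-\epsilon_3)\bigr\}$. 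Thus each of the four fibers is a single connected component, and $\{A \in \RR^{2\times 2 \times 2} : \brank(A) = 3\}$ has exactly four connected components. The main obstacle is supplying uniform uniqueness and continuity of the complex rank-two decomposition on $\{\Delta(A) < 0\}$: uniqueness (up to swap) follows from generic $2$-identifiability of $2 \times 2 \times 2$ complex tensors, while continuity follows from the fact that the two summands can be recovered as the roots of a quadratic whose coefficients are polynomial in $A$ and whose discriminant agrees with $\Delta(A)$ up to sign, hence is nonvanishing on the open locus $\{\Delta \ne 0\}$.
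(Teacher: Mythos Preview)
Your proof is correct and takes a genuinely different route from the paper's argument.

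The paper proceeds group-theoretically: it identifies $\{A : \brank(A) = 3\}$ as a single $\GL(U)\times\GL(V)\times\GL(W)$-orbit of an explicit tensor $B$, computes the Lie algebra of the stabilizer $H$, determines the full stabilizer via the normalizer $N_G(H_0)$, and reads off that every $(g_1,g_2,g_3)\in H$ has $\det g_1,\det g_2,\det g_3$ all of the same sign. The four components are then labeled by the signs of the products $\det(g_i)\det(g_j)$.

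Your argument is geometric rather than Lie-theoretic: you use the de~Silva--Lim hyperdeterminant trichotomy to identify the set with $\{\Delta<0\}$, extract the essentially unique complex-conjugate rank-two decomposition $A=u\otimes v\otimes w+\bar u\otimes\bar v\otimes\bar w$, and build a $\{\pm\}^3/(\mathbb{Z}/2\mathbb{Z})$-valued invariant from which open hemisphere of $\PP^1(\CC)\setminus\PP^1(\RR)$ each factor $[u],[v],[w]$ lies in. The two invariants are in fact the same object viewed differently: $\GL_2(\RR)$ preserves or swaps the two hemispheres of $\PP^1(\CC)\setminus\PP^1(\RR)$ according to the sign of the determinant, so your $\epsilon_i\epsilon_j$ matches the paper's $\operatorname{sgn}\bigl(\det(g_i)\det(g_j)\bigr)$. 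Your approach avoids the explicit stabilizer computation and gives a more transparent description of what distinguishes the four components, at the cost of importing the hyperdeterminant classification and the identifiability/continuity of the complex rank-two decomposition (both standard for $2\times2\times2$). The paper's approach, by contrast, is self-contained once one grants the single-orbit description and is the template one would use for other finite-orbit tensor spaces.
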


\begin{proof}
In fact, this result is not coordinate dependent and we will give a coordinate-free proof.
Let $U, V, W$ be real two-dimensional vector spaces. Pick any bases $\{u_1, u_2\}$ on $U$, $\{v_1, v_2\}$ on $V$, and $\{w_1, w_2\}$ on $W$. It is known  \cite{DSL} that the space $U \otimes V \otimes W$ has two typical real ranks $2$ and $3$ and the set of border rank-three tensors $\{ A \in U \otimes V \otimes W : \brank(A) =3\}$ is the orbit of
\[
B = u_1 \otimes v_1 \otimes w_1 + u_2 \otimes v_2 \otimes w_1 - u_1 \otimes v_2 \otimes w_2 + u_2 \otimes v_1 \otimes w_2
\]
under the action of the  group $G = \GL(U) \times \GL(V) \times \GL(W)$. For $(g_1, g_2, g_3) \in G$ and $A \in U\otimes V\otimes W$, we write $(g_1, g_2, g_3) \cdot A$ for the action of $(g_1, g_2, g_3)$ on $A$.

Let  $H$ be the stabilizer of $B$ in $G$. Let $H_0$ be the connected component of $H$ containing the identity element. The Lie algebra $\mathfrak{h}$ of $H_0$ takes the form
\begin{multline*}
\mathfrak{h} = \biggl\{
\biggl(  \begin{bmatrix}
\alpha_1 & -\alpha_2 \\
\alpha_2 & \alpha_1
\end{bmatrix} ,
\begin{bmatrix}
\beta_1 & -\beta_2 \\
\beta_2 & \beta_1
\end{bmatrix},
\begin{bmatrix}
\gamma_1 & -\gamma_2 \\
\gamma_2 & \gamma_1
\end{bmatrix} \biggr) \in \mathfrak{gl}(U) \oplus \mathfrak{gl}(V) \oplus \mathfrak{gl}(W)
\colon \\
\alpha_1 + \beta_1 + \gamma_1 = \alpha_2 - \beta_2 - \gamma_2 = 0\biggr\}.
\end{multline*}
Taking the exponential map, any $(g_1, g_2, g_3) \in H_0$ is then of the form
\[
\biggl(  \begin{bmatrix}
e^{\alpha_1} \cos \alpha_2 & -e^{\alpha_1} \sin \alpha_2 \\
e^{\alpha_1} \sin \alpha_2 & e^{\alpha_1} \cos \alpha_2
\end{bmatrix} ,
\begin{bmatrix}
e^{\beta_1} \cos \beta_2 & -e^{\beta_1} \sin \beta_2 \\
e^{\beta_1} \sin \beta_2 & e^{\beta_1} \cos \beta_2
\end{bmatrix},
\begin{bmatrix}
e^{\gamma_1} \cos \gamma_2 & -e^{\gamma_1} \sin \gamma_2 \\
e^{\gamma_1} \sin \gamma_2 & e^{\gamma_1} \cos \gamma_2
\end{bmatrix} \biggr),
\]
where $\alpha_1 + \beta_1 + \gamma_1 = \alpha_2 - \beta_2 - \gamma_2 = 0$. An argument  similar to  \cite[Lemma~2.1]{Gesmundo16}  shows that $H$ is contained in  $N_G(H_0)$, the normalizer of $H_0$. In fact any $(g_1, g_2, g_3) \in N_G(H_0)$ is of the form
\[
\biggl(  \begin{bmatrix}
\pm \eta_1 & 0 \\
0 & \eta_1
\end{bmatrix} h_1,
\begin{bmatrix}
\pm \eta_2 & 0 \\
0 & \eta_2
\end{bmatrix} h_2,
\begin{bmatrix}
\pm \eta_3 & 0 \\
0 & \eta_3
\end{bmatrix} h_3 \biggr),
\]
where $(h_1, h_2, h_3) \in H_0$, and $\eta_1 \eta_2 \eta_3 \ne 0$. If $(g_1, g_2, g_3) \in H$, then $\eta_1 \eta_2 \eta_3 = \pm 1$. Thus any $(g_1, g_2, g_3) \in H$ takes one of the following eight forms:{\footnotesize
\begin{align*}
\biggl(  \begin{bmatrix}
1 & 0 \\
0 & 1
\end{bmatrix} h_1,
\begin{bmatrix}
1 & 0 \\
0 & 1
\end{bmatrix} h_2,
\begin{bmatrix}
1 & 0 \\
0 & 1
\end{bmatrix} h_3 \biggr), \quad
&\biggl(  \begin{bmatrix}
1 & 0 \\
0 & -1
\end{bmatrix} h_1,
\begin{bmatrix}
1 & 0 \\
0 & -1
\end{bmatrix} h_2,
\begin{bmatrix}
1 & 0 \\
0 & -1
\end{bmatrix} h_3 \biggr),\\
\biggl(  \begin{bmatrix}
1 & 0 \\
0 & 1
\end{bmatrix} h_1,
\begin{bmatrix}
-1 & 0 \\
0 & -1
\end{bmatrix} h_2,
\begin{bmatrix}
-1 & 0 \\
0 & -1
\end{bmatrix} h_3 \biggr), \quad
&\biggl(  \begin{bmatrix}
1 & 0 \\
0 & -1
\end{bmatrix} h_1,
\begin{bmatrix}
-1 & 0 \\
0 & 1
\end{bmatrix} h_2,
\begin{bmatrix}
-1 & 0 \\
0 & 1
\end{bmatrix} h_3 \biggr),\\
\biggl(  \begin{bmatrix}
-1 & 0 \\
0 & -1
\end{bmatrix} h_1,
\begin{bmatrix}
1 & 0 \\
0 & 1
\end{bmatrix} h_2,
\begin{bmatrix}
-1 & 0 \\
0 & -1
\end{bmatrix} h_3 \biggr), \quad
&\biggl(  \begin{bmatrix}
-1 & 0 \\
0 & 1
\end{bmatrix} h_1,
\begin{bmatrix}
1 & 0 \\
0 & -1
\end{bmatrix} h_2,
\begin{bmatrix}
-1 & 0 \\
0 & 1
\end{bmatrix} h_3 \biggr),\\
\biggl(  \begin{bmatrix}
-1 & 0 \\
0 & -1
\end{bmatrix} h_1,
\begin{bmatrix}
-1 & 0 \\
0 & -1
\end{bmatrix} h_2,
\begin{bmatrix}
1 & 0 \\
0 & 1
\end{bmatrix} h_3 \biggr), \quad
&\biggl(  \begin{bmatrix}
-1 & 0 \\
0 & 1
\end{bmatrix} h_1,
\begin{bmatrix}
-1 & 0 \\
0 & 1
\end{bmatrix} h_2,
\begin{bmatrix}
1 & 0 \\
0 & -1
\end{bmatrix} h_3 \biggr),
\end{align*}}%
where $(h_1, h_2, h_3) \in H_0$. For any $(h_1, h_2, h_3) \in H_0$, we have $\det (h_i) > 0$ for $i = 1, 2, 3$, and so for any $(g_1, g_2, g_3) \in H$, we have either $\det (g_i) > 0$ or $\det (g_i) < 0$ for all $i = 1, 2, 3$.

Therefore $S = G/H$ has the following four connected components:
\begin{gather*}
\{(g_1, g_2, g_3) \cdot B : \det(g_1)\det(g_2) > 0,\; \det(g_1)\det(g_3) > 0,\; \det(g_2)\det(g_3) > 0\}, \\
\{(g_1, g_2, g_3) \cdot B : \det(g_1)\det(g_2) > 0,\; \det(g_1)\det(g_3) < 0,\; \det(g_2)\det(g_3) < 0\}, \\
\{(g_1, g_2, g_3) \cdot B : \det(g_1)\det(g_2) < 0,\; \det(g_1)\det(g_3) > 0,\; \det(g_2)\det(g_3) < 0\}, \\
\{(g_1, g_2, g_3) \cdot B : \det(g_1)\det(g_2) < 0,\; \det(g_1)\det(g_3) < 0,\; \det(g_2)\det(g_3) > 0\}. \qedhere
\end{gather*}
\end{proof}

\section{Higher-order connectedness of $X$-rank}\label{sec:fundgprkr}

In general it is difficult to compute the fundamental and higher homotopy groups of $s_r(X)$, the set of $X$-rank-$r$ points. We will instead compute it for an open dense subset of identifiable points, defined as follows.
\begin{definition}
Let $W$ be a finite-dimensional vector space over $\mathbb{F} = \RR$ or $\CC$, and $X \subsetneq \PP W$ be an irreducible nondegenerate nonsingular projective variety.  Here a $X$-rank-$r$ point is called \emph{identifiable} if it has a unique $X$-rank-$r$ decomposition. We say that  $X$ is \emph{$r$-identifiable} if a general point of $s_r(X)$ has a unique $X$-rank-$r$ decomposition. 
\end{definition}

We will first need to define the set of points to be excluded from consideration. Let
\begin{equation}\label{eq:nonunidecomppoints}
D_r \coloneqq \{x \in s_r(X) \colon \rank(x) < r \text{ or } x \text{ has non-unique rank-$r$ decompositions}\}.
\end{equation}
The next result gives the fundamental and higher homotopy groups of $s_r(X) \setminus D_r$ under some mild conditions.
\begin{proposition}\label{prop:ridenfungp}
Let $X$ be $r$-identifiable over $\mathbb{F}$ and
\begin{equation}\label{eq:codimOx}
c\coloneqq \codim_{\RR} \big(s_r^{-1}(D_r), (\widehat{X}\setminus\{0\})^{r} \big) > 2,
\end{equation}
Then
\[
\pi_k (s_r(X) \setminus D_r) \cong 
\begin{cases}
\pi_1(\widehat{X} \setminus \{0\})^{r} \rtimes \mathfrak{S}_r & \text{if}\; k=1, \\
\pi_k (\widehat{X} \setminus \{0\})^{r} & \text{if}\; c\ge 4 \;\text{and}\; 2 \le k \le c-2.
\end{cases}
\]
Here the semidirect product $\rtimes$ is given by the action of the symmetric group $\mathfrak{S}_r$ on $\pi_1(\widehat{X} \setminus \{0\})^{r}$ as permutations.
\end{proposition}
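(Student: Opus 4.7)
The plan is to exhibit $s_r(X)\setminus D_r$ as the base of a principal $\mathfrak{S}_r$-covering whose total space is $Y \coloneqq (\widehat{X}\setminus\{0\})^{r} \setminus s_r^{-1}(D_r)$, and then combine the long exact sequence of this covering with Theorem~\ref{thm:removing a semialgebraicsubset}. Write $U \coloneqq \widehat{X}\setminus\{0\}$.

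First I would verify that the restriction $s_r|_Y \colon Y \to s_r(X)\setminus D_r$ is a principal $\mathfrak{S}_r$-covering, where $\mathfrak{S}_r$ acts on $Y$ by permutation of the $r$ factors. The action is free: if a non-identity $\sigma \in \mathfrak{S}_r$ fixed some $(x_1,\dots,x_r) \in Y$, then two of the $x_i$'s would have to coincide, but because $\widehat{X}$ is a cone this would force $s_r(x_1,\dots,x_r)$ to have rank strictly less than $r$, contradicting membership in $s_r(X)\setminus D_r$. By the $r$-identifiability of $X$ the $\mathfrak{S}_r$-orbits coincide with the fibers of $s_r|_Y$, and the sum map is a local diffeomorphism on $Y$ (an \'etale condition that follows from identifiability together with Terracini's lemma), so the induced bijection $Y/\mathfrak{S}_r \to s_r(X)\setminus D_r$ is a homeomorphism.

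For the higher homotopy groups I would combine two ingredients. Because $Y \hookrightarrow U^r$ has complement of real codimension $c$, Theorem~\ref{thm:removing a semialgebraicsubset} yields $\pi_k(Y) \cong \pi_k(U^r) \cong \pi_k(U)^r$ for $0 \le k \le c-2$. On the other hand, the long exact sequence of the covering $\mathfrak{S}_r \to Y \to s_r(X)\setminus D_r$ furnished by Theorem~\ref{thm:homlongseq}, together with $\pi_k(\mathfrak{S}_r)=0$ for $k \ge 1$, shows that $\pi_k(Y) \cong \pi_k(s_r(X)\setminus D_r)$ for $k \ge 2$. Combining these, for $c \ge 4$ and $2 \le k \le c-2$ we obtain $\pi_k(s_r(X)\setminus D_r) \cong \pi_k(\widehat{X}\setminus\{0\})^r$.

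To handle $k=1$ and produce the semidirect product on the nose, I would pass to the universal cover. Let $p \colon \widetilde U \to U$ be the universal cover with deck group $\pi_1(U)$, so $p^r \colon \widetilde U^r \to U^r$ is the universal cover of $U^r$ with deck group $\pi_1(U)^r$, and set $\widetilde Y \coloneqq (p^r)^{-1}(Y)$. Since $c > 2$, the inclusion $Y \hookrightarrow U^r$ induces an isomorphism on $\pi_1$, so $\widetilde Y$ is simply connected and is in fact the universal cover of $Y$. The group $G \coloneqq \pi_1(U)^r \rtimes \mathfrak{S}_r$ acts on $\widetilde U^r$ by component-wise deck transformations together with permutation of factors; this action restricts to a free action on $\widetilde Y$ (an isotropy element $(g,\sigma)$ would force $\sigma$ to stabilize a point of $Y$, hence $\sigma=e$ by Step~1, and then $g=e$ by freeness of the deck action), with quotient $\widetilde Y / G = Y/\mathfrak{S}_r = s_r(X)\setminus D_r$. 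Since $\widetilde Y$ is simply connected and $G$ acts freely and properly discontinuously, covering space theory identifies $G$ with $\pi_1(s_r(X)\setminus D_r)$, giving the semidirect product with $\mathfrak{S}_r$ acting on $\pi_1(U)^r$ by permutation of factors. The main obstacle I foresee is the local-diffeomorphism claim in the first step: one must rule out tangential degeneracies of $s_r$ on $Y$, i.e., translate the discrete $r$-identifiability hypothesis into an infinitesimal (\'etale) statement so that $Y/\mathfrak{S}_r \to s_r(X)\setminus D_r$ really is a homeomorphism onto the Euclidean topology inherited from $W$.
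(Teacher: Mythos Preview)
Your proposal is correct and follows essentially the same route as the paper: both recognize that $s_r|_Y \colon Y \to s_r(X)\setminus D_r$ is a normal $\mathfrak{S}_r$-covering, invoke Theorem~\ref{thm:removing a semialgebraicsubset} to identify $\pi_k(Y)$ with $\pi_k(\widehat{X}\setminus\{0\})^r$ for $k\le c-2$, and then use covering-space theory to pass to the base. Your treatment is in fact more careful than the paper's in two respects. First, the paper simply writes down the short exact sequence $1\to\pi_1(U)^r\to\pi_1(s_r(X)\setminus D_r)\to\mathfrak{S}_r\to 1$ and asserts the semidirect product without justifying the splitting; your universal-cover construction of the free $\pi_1(U)^r\rtimes\mathfrak{S}_r$-action on $\widetilde Y$ supplies exactly that missing step. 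Second, the gap you flag at the end---that discrete $r$-identifiability must be upgraded to an \'etale statement for $s_r|_Y$ to be a genuine covering---is real, and the paper does not address it either (it simply asserts the covering). So your proof is at least as complete as the paper's, and the obstacle you identify is a legitimate one that the paper leaves implicit.
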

\begin{proof}
Recall that $s_r$ also denotes the map in \eqref{eq:defSigma}. Slightly abusing notation, we will also use $s_r$ to denote the restriction of $s_r$ on $(\widehat{X} \setminus \{0\})^{r} \setminus s_r^{-1}(D_r)$.

Since $\mathfrak{S}_r$ acts on $(\widehat{X} \setminus \{0\})^{r}$ as Deck transformations and
\[
s_r \colon (\widehat{X}\setminus\{0\})^{r} \setminus s_r^{-1}(D_r) \to s_r(X) \setminus D_r
\] 
gives an $r!$-fold normal covering space of $s_r(X) \setminus D_r$. Therefore the quotient group
\begin{equation}\label{eq:fungprkridpts}
\pi_1 (s_r(X) \setminus D_r) \bigm/ \pi_1 \bigl((\widehat{X}\setminus\{0\})^{r} \setminus s_r^{-1} (D_r) \bigr) = \mathfrak{S}_r.
\end{equation}

If $X$ is $r$-identifiable and the codimension condition is satisfied, then by Theorem~\ref{thm:removing a semialgebraicsubset},
\[\pi_1 \big((\widehat{X} \setminus \{0\})^{r} \setminus s_r^{-1}(D_r) \big) \cong \pi_1((\widehat{X} \setminus \{0\})^{r}) \cong \pi_1(\widehat{X} \setminus \{0\})^{r},
\]
and by \eqref{eq:fungprkridpts},
\[
\pi_1 (s_r(X) \setminus D_r) \cong \pi_1(\widehat{X} \setminus \{0\})^{r} \rtimes \mathfrak{S}_r,
\]
the semidirect product of $\pi_1(\widehat{X} \setminus \{0\})^{r}$ and $\mathfrak{S}_r$.

If $c\ge 4$ and $2 \le k\le c-2$, the isomorphism between $\pi_k (s_r(X) \setminus D_r)$ and $\pi_k (\widehat{X} \setminus \{0\})^{r} )$ follows from Theorem \ref{thm:removing a semialgebraicsubset} and the fact that the $k$-sphere $\mathbb{S}^k$ is simply connected when $k\ge 2$, which implies that every map from $\mathbb{S}^k$ to $s_r(X) \setminus D_r$ can be lifted to $ (\widehat{X}\setminus\{0\})^{r} \setminus s_r^{-1}(D_r) $, by the lifting property of covering spaces.
\end{proof}

We next show that with identifiability, a smooth point has a unique decomposition.
\begin{proposition}\label{prop:smoothiden}
Let $X$ be $r$-identifiable over $\mathbb{F} = \CC$ or $\RR$. If $x \in s_r(X)$ is a smooth point in $\widehat{\sigma}_r(X)$, then $x$ has a unique $X$-rank-$r$ decomposition.
\end{proposition}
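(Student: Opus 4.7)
The plan is a proof by contradiction. Assume $x$ admits two distinct unordered $X$-rank-$r$ decompositions $x = x_1 + \cdots + x_r = y_1 + \cdots + y_r$ with $\{x_1,\dots,x_r\} \ne \{y_1,\dots,y_r\}$ as multisets, and set $p = (x_1,\dots,x_r)$ and $q = (y_1,\dots,y_r)$ in $s_r^{-1}(x) \subseteq (\widehat{X} \setminus \{0\})^r$. Then $p$ and $q$ lie in distinct orbits of the natural $\mathfrak{S}_r$-action permuting coordinates. The aim is to produce a Euclidean-open neighborhood of $x$ in $\widehat{\sigma}_r(X)$ on which every point still has at least two distinct unordered $X$-rank-$r$ decompositions, contradicting $r$-identifiability. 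As preparation, $r$-identifiability forces $s_r$ to be generically $r!$-to-one, so $\dim_\mathbb{F} \widehat{\sigma}_r(X) = r \dim_\mathbb{F} \widehat{X}$; combined with smoothness of $x$, this gives $\dim_\mathbb{F} \mathsf{T}_x \widehat{\sigma}_r(X) = r \dim_\mathbb{F} \widehat{X}$.

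The central step is a local diffeomorphism argument via Terracini. By the semialgebraic Terracini's lemma \cite[Lemma~12]{QCL16}, $\operatorname{im}(ds_r|_p) = \mathsf{T}_{x_1}\widehat{X} + \cdots + \mathsf{T}_{x_r}\widehat{X} \subseteq \mathsf{T}_x \widehat{\sigma}_r(X)$, and likewise at $q$. Since source and target of $ds_r|_p$ share the $\mathbb{F}$-dimension $r \dim_\mathbb{F} \widehat{X}$, $ds_r|_p$ is an isomorphism exactly when the summands $\mathsf{T}_{x_i}\widehat{X}$ sit in direct sum position. When $ds_r|_p$ and $ds_r|_q$ are both isomorphisms, the inverse function theorem (valid over either $\mathbb{F}$) supplies open neighborhoods $U_p \ni p$ and $U_q \ni q$ on which $s_r$ restricts to a local diffeomorphism onto open neighborhoods $V_p, V_q \subseteq \widehat{\sigma}_r(X)$ of $x$. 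Shrink so that $\mathfrak{S}_r \cdot U_p$ and $\mathfrak{S}_r \cdot U_q$ are disjoint (possible since the finite orbits $\mathfrak{S}_r \cdot p$ and $\mathfrak{S}_r \cdot q$ are disjoint); then every $y$ in the open set $V_p \cap V_q$ acquires preimages both in $U_p$ and in $U_q$ lying in distinct $\mathfrak{S}_r$-orbits, hence two distinct unordered $X$-rank-$r$ decompositions---the sought contradiction to $r$-identifiability.

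The expected main obstacle is the ramified case, in which $ds_r|_p$ or $ds_r|_q$ fails to be an isomorphism. The cleanest remedy is to pass to the induced birational morphism $\widetilde{s}_r : ((\widehat{X} \setminus \{0\})^r)/\mathfrak{S}_r \to \widehat{\sigma}_r(X)$ (birational because of $r$-identifiability) and invoke Zariski's Main Theorem: since $x$ is a normal, indeed smooth, point of the target, quasi-finiteness of $\widetilde{s}_r$ at $x$ forces $|\widetilde{s}_r^{-1}(x)| \le 1$. Quasi-finiteness itself follows from miracle flatness---$s_r$ is a dominant morphism between smooth equi-dimensional varieties with generically zero-dimensional fibers, so over the smooth point $x$ it is flat and the fiber length must equal the generic value $r!$, ruling out positive-dimensional components of $s_r^{-1}(x)$ and collapsing the fiber of $\widetilde{s}_r$ over $x$ to a single point. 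The real case is reduced to the complex case by complexifying both $\widehat{X}$ and $\widehat{\sigma}_r(X)$ and observing that any real $X(\mathbb{R})$-rank-$r$ decomposition is a fortiori an $X$-rank-$r$ decomposition, so the complex uniqueness descends.
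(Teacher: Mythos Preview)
Your contradiction strategy in the unramified case---local diffeomorphisms near two distinct preimages producing an open set of non-identifiable points---is precisely what the paper does. The divergence is in how you treat the possibility that $ds_r|_p$ fails to be an isomorphism.

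The paper eliminates the ramified case outright by invoking \cite[Corollary~1.8]{Adlandsvik87}: for \emph{any} decomposition $x = x_1 + \dots + x_r$ of a smooth point $x \in \widehat{\sigma}_r(X)$, one has $\mathsf{T}_x \widehat{\sigma}_r(X) = \mathsf{T}_{x_1}\widehat{X} + \dots + \mathsf{T}_{x_r}\widehat{X}$. Since $r$-identifiability forces $\dim \widehat{\sigma}_r(X) = r\dim \widehat{X}$, this sum is direct and $ds_r|_p$ is automatically an isomorphism. So the ramified case you worry about never arises, and the inverse function theorem applies at every preimage of $x$.

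Your substitute argument via miracle flatness has a genuine gap. Miracle flatness says that a morphism from a Cohen--Macaulay source to a regular target is flat \emph{provided all fibers have the expected dimension}; it does not say that a dominant morphism between smooth equidimensional varieties with generically finite fibers is flat everywhere over the smooth locus of the target. Indeed, flatness over $x$ would itself imply the fiber is zero-dimensional, which is what you are trying to establish---the reasoning is circular. Without an independent reason for quasi-finiteness over $x$, Zariski's Main Theorem cannot be invoked. The missing input is exactly {\AA}dlandsvik's tangent-space identity.

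A secondary issue: your reduction of the real case to the complex case assumes that real $r$-identifiability implies complex $r$-identifiability, which is not immediate (the converse is easy). The paper does not need this reduction because {\AA}dlandsvik's result, combined with the setup $X \in \mathscr{A}(\PP W)$ ensuring $\sigma_r(X(\RR)) = \bigl(\sigma_r(X)\bigr)(\RR)$, handles both fields uniformly.
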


\begin{proof}
Let $x = x_1 + \dots + x_r \in s_r(X)$ be smooth in $\widehat{\sigma}_r(X)$. Then by \cite[Corollary~1.8]{Adlandsvik87}, we may deduce the following: (i) $x_1 + \dots + x_r$ has $X$-rank $r$; (ii)
\[
s_{r*} (\mathsf{T}_{x_1}\widehat{X} \oplus \dots \oplus \mathsf{T}_{x_r}\widehat{X}) = \mathsf{T}_{x_1}\widehat{X} + \dots + \mathsf{T}_{x_r}\widehat{X} 
= \mathsf{T}_{x_1 + \dots + x_r} \widehat{\sigma}_r(X),
\]
which implies that the linear map $s_{r*}$ has full rank at $(x_1, \dots, x_r)$; and (iii) for each $x_i$, there is an open neighborhood $B(x_i, \varepsilon_i)$ of $x_i$ such that $s_r$ is an isomorphism on $B(x_1, \varepsilon_1) \times \dots \times B(x_r, \varepsilon_r)$. 

Suppose $x_1 + \dots + x_r = y_1 + \dots + y_r$ for some $y_1, \dots, y_r \in \widehat{X}$, and $\{x_1, \dots, x_r\} \neq \{y_1, \dots, y_r\}$. Then for each $y_i$, there is an open neighborhood $B(y_i, \delta_i)$ of $y_i$, such that $s_r$ is an isomorphism on $B(y_1, \delta_1) \times \dots \times B(y_r, \delta_r)$. Therefore for any
\[
z \in s_r \big(B(x_1, \varepsilon_1) \times \dots \times B(x_r, \varepsilon_r)\big) \cap s_r \big(B(y_1, \delta_1) \times \dots \times B(y_r, \delta_r)\big),
\]
$z$ has at least two $X$-rank-$r$ decompositions, which contradicts that $X$ is $r$-identifiable.
\end{proof}

\section{Higher-order connectedness of tensor rank}\label{sec:horank}

Our calculations of the fundamental groups and higher homotopy groups of fixed-rank tensors will rely heavily  on geometric information, notably knowledge of the singular loci of the secant varieties. As such our discussion will be limited to rank-$r$ tensors  where $r = 1,2,3$. The main difficulty in extending these calculations to rank-$r$ tensors for $r \ge 4$ is that the singular loci of the $r$th secant varieties of the Segre  variety are still unknown for $r \ge 4$. The same difficulty will limit our calculations in Section~\ref{sec:hosymrank} for the homotopy groups of symmetric tensors to those of symmetric ranks $\le 3$.

Parts of our results in Propositions~\ref{prop:vaninshig higher homotopy of rank one complex}, \ref{prop:vanishing higher homotopy group of complex identifiable rank 2}, \ref{prop:vanishing homotopy group of rank one real tensors}, and \ref{prop:vanishing homotopy group of real identifiable rank two tensors} will be stated in terms of higher homotopy groups of spheres $\pi_k(\mathbb{S}^n)$. So in cases\footnote{See  \cite{toda2016composition} for an extensive list of known $\pi_k(\mathbb{S}^n)$ for many values of $(k,n)$.} where these are known, we may determine the explicit homotopy group for the set of low-rank tensors in question.  This is a consequence of our relating higher homotopy groups of low-rank identifiable tensors to higher homotopy groups of spheres via \eqref{eqn:homotopy groups of complex projective spaces} and \eqref{eqn:homotopty group of real projective spaces}. For instance, the vanishing of higher homotopy groups in Propositions~\ref{prop:vaninshig higher homotopy of rank one complex} and  \ref{prop:vanishing higher homotopy group of complex identifiable rank 2} are directly obtained from these. In principle, we could derive many more explicit results  easily using the list in \cite{toda2016composition}, but we omit these calculations to avoid a tedious case-by-case discussion.

\subsection{Fundamental and higher homotopy groups of complex rank-$r$ tensors}\label{sec:fundgprkr}

To deduce the fundamental group of the set of rank-$r$ tensors for small values of $r$, we apply the results in  Section~\ref{sec:fundgprkr} to the case where $X$ is the Segre variety. To be precise, let $W_1, \dots, W_d$ be finite dimensional vector spaces over $\mathbb{F} = \CC$ or $\RR$. As usual, we will assume that all complex vector spaces are of (complex) dimensions at least two throughout this section. Let $d \ge 3$ and $X = \Seg (\PP W_1 \times \dots \times \PP W_d)$ be the Segre variety. When $r = 2$, by \cite{MichalekOZ14}, the singular locus of $\sigma_2 (X)$ takes the form
\[
Y \coloneqq \bigcup\limits_{1 \le i \le j \le d} \PP W_1 \times \dots \times \PP W_{i-1} \times \PP W_{i+1} \times \dots \times \PP W_{j-1} \times \PP W_{j+1} \times \dots \times \PP W_d \times \sigma_2 (\PP W_{i} \times \PP W_{j}).
\]
Thus if $x \in \widehat{Y} \cap s_2 (X)$, then $\rank (x) < 2$ or $x$ does not have a unique rank-$2$ decomposition.
By Proposition~\ref{prop:smoothiden}, the set $D_2$ as defined in \eqref{eq:nonunidecomppoints} for $r=2$ is then equal to  $\widehat{Y} \cap s_2(X)$ over $\mathbb{F}$. 
This allows us to deduce the fundamental group of $s_2(X) \setminus D_2$.

\begin{theorem}[Fundamental group of  complex tensor rank]\label{thm: fundamental group complex}
Let $d \ge 3$ and $W_1, \dots, W_d$ be complex vector spaces of dimensions $n_1, \dots, n_d$.
\begin{enumerate}[\upshape (i)]
\item\label{pi1crank1} The set of rank-one complex tensors has  fundamental group
\[
\pi_1 \bigl(\{ A \in W_1 \otimes \dots \otimes W_d : \rank(A) =1\}\bigr) = 0.
\]
\item Let $n_1 \le \dots \le n_d$ and $(n_1 - 1) + \dots + (n_{d-2} - 1) > 1$. Then set of the rank-two identifiable complex  tensors has fundamental group
\[
\pi_1 \bigl(\{ A \in W_1 \otimes \dots \otimes W_d : \rank(A) =2, \; A \;\text{is identifiable}\} \bigr)= \mathbb{Z}/2\mathbb{Z}.
\]
\end{enumerate}
\end{theorem}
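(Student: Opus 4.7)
For part (i), the set of rank-one tensors coincides with $\widehat{X} \setminus \{0\}$ for $X = \Seg(\PP W_1 \times \dots \times \PP W_d)$. The outer tensor product
\[
\phi \colon (W_1 \setminus \{0\}) \times \dots \times (W_d \setminus \{0\}) \to \widehat{X} \setminus \{0\}, \quad (v_1, \dots, v_d) \mapsto v_1 \otimes \dots \otimes v_d,
\]
realizes the target as the base of a principal $(\CC^*)^{d-1}$-bundle: the subtorus $\{(t_1, \dots, t_d) \in (\CC^*)^d : t_1 \cdots t_d = 1\}$ acts freely by coordinate rescalings, and its orbits are exactly the fibers of $\phi$. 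Since $n_i \ge 2$ for each $i$, every $W_i \setminus \{0\}$ is homotopy equivalent to $\mathbb{S}^{2n_i - 1}$ and hence simply connected, so is their product. The tail of the long exact sequence of Theorem~\ref{thm:homlongseq},
\[
\pi_1\bigl((\CC^*)^{d-1}\bigr) \longrightarrow \pi_1\bigl((W_1\setminus\{0\}) \times \dots \times (W_d\setminus\{0\})\bigr) \longrightarrow \pi_1(\widehat{X}\setminus\{0\}) \longrightarrow 0,
\]
then forces $\pi_1(\widehat{X}\setminus\{0\}) = 0$.

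For part (ii), the plan is to apply Proposition~\ref{prop:ridenfungp} with $r = 2$ and $X = \Seg(\PP W_1 \times \dots \times \PP W_d)$. The 2-identifiability of $X$ under the standing assumptions $d \ge 3$ and $n_i \ge 2$ follows from Kruskal's uniqueness theorem. The core task is the codimension hypothesis $c > 2$. Proposition~\ref{prop:smoothiden} confines $D_2$ to $\Sing \widehat{\sigma}_2(X) \cap s_2(X)$, and the Michalek--Oeding--Zwiernik description of $\Sing \sigma_2(X)$ recorded just before the theorem, combined with a direct dimension count on each irreducible component, yields
\[
\codim_{\CC}\bigl(\Sing \sigma_2(X),\, \sigma_2(X)\bigr) \;\ge\; \sum_{k=1}^{d-2}(n_k - 1) + 2.
\]
Under the hypothesis $\sum_{k=1}^{d-2}(n_k - 1) > 1$ this is at least $4$ in complex codimension, hence at least $8$ in real codimension. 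Translating this bound through the generically finite projection $s_2$, and separately handling the rank-one stratum (where $s_2$-fibers are positive-dimensional but the stratum itself is correspondingly smaller), gives $c > 2$.

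With both hypotheses verified, Proposition~\ref{prop:ridenfungp} delivers
\[
\pi_1\bigl(s_2(X) \setminus D_2\bigr) \;\cong\; \pi_1(\widehat{X}\setminus\{0\})^{2} \rtimes \mathfrak{S}_2,
\]
and substituting $\pi_1(\widehat{X}\setminus\{0\}) = 0$ from part (i) collapses the normal factor, leaving $\mathfrak{S}_2 \cong \mathbb{Z}/2\mathbb{Z}$. I expect the sticking point to be not the fundamental-group assembly—once the codimension bound is granted, the conclusion is essentially automatic—but rather the passage from $\codim D_2$ inside $\widehat{\sigma}_2(X)$ to $\codim s_2^{-1}(D_2)$ inside $(\widehat{X}\setminus\{0\})^{2}$. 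Because $s_2$ has positive-dimensional fibers over the rank-one stratum and potentially over the non-identifiable rank-two stratum, one must track fiber-dimension jumps carefully, and it is exactly here that the full strength of $\sum_{k=1}^{d-2}(n_k - 1) > 1$ is consumed to force the pulled-back codimension to strictly exceed $2$.
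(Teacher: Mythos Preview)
Your part~(i) is correct and takes a genuinely different fibration from the paper. The paper works with the $\CC^*$-bundle $\mathcal{O}_X^\circ(-1) \to X$ over the Segre variety and must argue that the generator of $\pi_1(\CC^*)$ dies in $\pi_1(\mathcal{O}_X^\circ(-1))$ by factoring it through $\pi_1(\mathbb{S}^{2n_1-1})=0$. Your $(\CC^*)^{d-1}$-bundle $\prod_i(W_i\setminus\{0\}) \to \widehat{X}\setminus\{0\}$ is cleaner: the total space is a product of odd-dimensional spheres up to homotopy, hence simply connected, and surjectivity of $\pi_1(\text{total})\to\pi_1(\text{base})$ (from $\pi_0((\CC^*)^{d-1})=0$) finishes immediately. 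Both routes are short; yours avoids the extra step of tracking where the $\pi_1(\CC^*)$-generator goes.

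For part~(ii) you and the paper share the same skeleton---invoke Proposition~\ref{prop:ridenfungp} and feed in $\pi_1(\widehat{X}\setminus\{0\})=0$---but diverge on how to verify the codimension hypothesis, and your route is where the proposal is incomplete. You compute $\codim_\CC(\Sing\sigma_2(X),\sigma_2(X))\ge\sum_{k=1}^{d-2}(n_k-1)+2$ and then propose to ``translate through $s_2$''; you correctly flag that the fibers of $s_2$ jump over the non-identifiable rank-two locus, but you do not carry out the bookkeeping. In fact the generic fiber over that locus is $2$-dimensional (decompositions of a fixed rank-$2$ matrix in a $2\times 2$ block), so the translation costs exactly $2$ and your bound collapses to $\sum_{k=1}^{d-2}(n_k-1)$---which is precisely the paper's answer. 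The paper avoids this detour entirely: it observes that $s_2^{-1}$ of the largest component of $D_2$ (the one indexed by $\{d-1,d\}$) is exactly the set of ordered pairs $(x_1,x_2)\in(\widehat{X}\setminus\{0\})^2$ whose first $d-2$ projective factors coincide, and reads off $\codim_\CC = \sum_{k=1}^{d-2}(n_k-1)$ directly from that parametrization. This is both shorter and sidesteps the fiber-dimension accounting you were worried about; if you complete your route, you will arrive at the same number, but the direct description of $s_2^{-1}(D_2)$ is the more efficient move.
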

\begin{proof}
\begin{enumerate}[\upshape (i)]
\item Let $\mathcal{O}^{\circ}_{X}(-1)$ be the bundle in \eqref{eq: nonzerotautological} with $S = X = \Seg (\PP W_1 \times \dots \times \PP W_d)$.  The projection $p_2 \colon \mathcal{O}_{X}^{\circ} (-1) \to W_1 \otimes \cdots \otimes W_d$ is a homeomorphism between $\mathcal{O}_{X}^{\circ} (-1)$ and the set of rank-one tensors. So the fundamental group of the set of rank-one tensors is the same as that of  $\mathcal{O}_{X}^{\circ} (-1)$. If we fix a choice of Hermitian metrics on $W_1, \dots, W_d$, we have the following commutative diagram
\[
\begin{tikzpicture}[>=triangle 60]
\matrix[matrix of math nodes,column sep={60pt,between origins},row
sep={45pt,between origins},nodes={asymmetrical rectangle}] (s)
{
&|[name=S1]| \mathbb{S}^1 &|[name=S2n-1]| \mathbb{S}^{2n_i - 1} &|[name=PW]| \PP W_i \\
&|[name=C]| \CC \setminus \{0\} &|[name=O]| \mathcal{O}_{\PP W_i}^{\circ} (-1) &|[name=X]| \PP W_i \\
};
\path[overlay,->, font=\scriptsize,>=latex]
          (S1) edge (S2n-1)
          (S2n-1) edge (PW)
          (C) edge (O)
          (O) edge (X)
          (S1) edge (C)
          (S2n-1) edge (O)
          (PW) edge (X)
;
\end{tikzpicture}
\]
where $\mathbb{S}^{2n_i - 1}$ is regarded as the unit sphere in $W_i$ and $\mathbb{S}^1$ as that in $\CC$. Thus $\mathcal{O}_{\PP W_i}^{\circ}(-1)$ has the same homotopy type as $\mathbb{S}^{2n_i - 1}$.  Consider the sequence
\[
\mathbb{Z} \xrightarrow{\jmath_*} \pi_1 \bigl(\mathcal{O}_{X}^{\circ} (-1)\bigr) \to 0
\]
induced by $\CC \setminus \{0\} \xrightarrow{\jmath} \mathcal{O}_{X}^{\circ} (-1) \to X$. For any $[v_1 \otimes \cdots \otimes v_d] \in X$, we may assume that $\|v_1\| = \cdots = \|v_d\| = 1$. Thus a generator of $\pi_1 (\CC \setminus \{0\}) = \mathbb{Z}$ can be realized as the unit circle in the complex line spanned by $v_1 \otimes \cdots \otimes v_d$, i.e., $\lambda \cdot v_1 \otimes \cdots \otimes v_d$, where $\lambda \in \CC$ has $|\lambda |= 1$. Since
\[
\lambda \cdot v_1\otimes \cdots \otimes v_d = (\lambda v_1) \otimes v_2 \otimes \cdots \otimes v_d,
\]
this unit circle can be realized as the unit circle in the complex line spanned by $v_1 \in W_1$, i.e., a generator of $\pi_1 (\mathbb{S}^1) = \mathbb{Z}$ in the sequence $\pi_1 (\mathbb{S}^1) \to \pi_1 (\mathbb{S}^{2n_1 - 1}) \to \pi_1 (\PP W_1)$. Since $\pi_1 (\mathbb{S}^{2n_1 - 1}) = 0$ for $n_1 \ge 2$, we get  $\jmath_* (\mathbb{Z}) = 0$, and therefore $\pi_1 (\mathcal{O}^{\circ}_{X}(-1)) = 0$.

\item Let $x = a_1 \otimes \dots \otimes a_{d-2} \otimes a_{d-1} \otimes a_d + a_1 \otimes \dots \otimes a_{d-2} \otimes b_{d-1} \otimes b_d \in D_2$. Then
\begin{multline*}
s_2^{-1} (x) = \{ (a_1 \otimes \dots \otimes a_{d-2} \otimes u_{d-1} \otimes u_d, a_1 \otimes \dots \otimes a_{d-2} \otimes v_{d-1} \otimes v_d) \in \widehat{X}^{2} \colon \\
u_{d-1}\otimes u_d + v_{d-1}\otimes v_d = a_{d-1}\otimes a_d + b_{d-1}\otimes b_d \},
\end{multline*}
which implies that
\[
\codim_{\CC} \bigl( s_2^{-1} (D_r), (\widehat{X} \setminus \{0\})^{2} \bigr) = (n_1 - 1) + \dots + (n_{d-2} - 1) > 1.
\]
Let $W$ be a complex vector space, and $N \subsetneq M \subseteq W$ be two subsets in $W$.  Recall that for two complex manifolds $N \subsetneq M$,
\begin{equation}\label{eq:codimRC}
\codim_{\RR}(N, M) = 2 \codim_{\CC} (N, M),
\end{equation}
and that this extends to the case where $M$ and $N$ are each a union of finitely many disjoint complex manifolds (where dimension is defined as the maximum dimension of the constituent manifolds).
Therefore we have
\[
\codim_{\RR} \bigl( s_2^{-1} (D_r), (\widehat{X} \setminus \{0\})^{2} \bigr) > 2.
\]
Given that $\pi_1(\widehat{X} \setminus \{0\}) = 0$ by part~\eqref{pi1crank1}, it follows from  Proposition~\ref{prop:ridenfungp} that the set of complex rank-two identifiable $d$-tensors has fundamental group $\mathbb{Z}/2\mathbb{Z}$. \qedhere
\end{enumerate}
\end{proof}

We will move on to the higher homotopy groups. Again  $X = \Seg (\PP W_1 \times \dots \times \PP W_d)$ will denote the Segre variety in the proofs below. Note that there is no loss of generality in assuming that $W_1,\dots,W_d$ are arranged in nondecreasing order of dimension --- otherwise, we just need to replace $n_1$ with $\min\{n_1,\dots,n_d\}$ in the statements of the next two results.
\begin{theorem}[Higher homotopy groups of complex rank-one tensors]\label{prop:vaninshig higher homotopy of rank one complex}
Let $d \ge 3$ and $W_1, \dots, W_d$ be complex vector spaces of dimensions $n_1 \le \dots \le n_d$. Then
\[
\pi_2 \bigl(\{ A \in W_1 \otimes \dots \otimes W_d : \rank(A) =1\}\bigr) = \mathbb{Z}^{d},
\]
and
\[
\pi_k \bigl(\{ A \in W_1 \otimes \dots \otimes W_d : \rank(A) =1\}\bigr) \cong \prod\nolimits_{j=1}^{d'} \pi_k (\mathbb{S}^{2n_j-1}) \quad \text{for all}  \; k \ge 3.
\]
In particular, if $3\le k \le 2n_1 - 2$, then
\[
\pi_k \bigl(\{ A \in W_1 \otimes \dots \otimes W_d : \rank(A) =1\}\bigr) = 0.
\]
\end{theorem}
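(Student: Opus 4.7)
The plan is to identify the rank-one locus with the punctured tautological bundle $\mathcal{O}_X^{\circ}(-1)$ over the Segre variety $X = \Seg(\PP W_1 \times \dots \times \PP W_d)$ and then read off its homotopy groups from the homotopy long exact sequence of that fibration. As already exploited in the proof of Theorem~\ref{thm: fundamental group complex}(i), the projection $p_2$ restricts to a homeomorphism from $\mathcal{O}_X^{\circ}(-1)$ onto $\{A \in W_1 \otimes \dots \otimes W_d : \rank(A) = 1\}$, and we have a fibration
\[
\CC \setminus \{0\} \longrightarrow \mathcal{O}_X^{\circ}(-1) \longrightarrow \PP W_1 \times \dots \times \PP W_d
\]
whose fiber is homotopy equivalent to $\mathbb{S}^1$.

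Applying the Hopf fibrations $\mathbb{S}^1 \to \mathbb{S}^{2n_i - 1} \to \PP W_i$ factor by factor, one obtains $\pi_1(X) = 0$, $\pi_2(X) \cong \mathbb{Z}^d$, and $\pi_k(X) \cong \prod_{i=1}^{d}\pi_k(\mathbb{S}^{2n_i - 1})$ for all $k \ge 3$. For $k \ge 3$ the fiber contributes nothing to the long exact sequence of the displayed fibration, since $\pi_k(\mathbb{S}^1) = \pi_{k-1}(\mathbb{S}^1) = 0$, and hence
\[
\pi_k\bigl(\mathcal{O}_X^{\circ}(-1)\bigr) \cong \pi_k(X) \cong \prod_{i=1}^{d}\pi_k(\mathbb{S}^{2n_i - 1}),
\]
which is the second formula in the statement. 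The vanishing in the range $3 \le k \le 2n_1 - 2$ is then immediate: in that range $k \le 2n_1 - 2 < 2n_i - 1$ for every $i$ because $n_1 \le n_i$, and $\pi_k(\mathbb{S}^m) = 0$ whenever $k < m$.

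The remaining computation of $\pi_2$ comes from the exact segment
\[
0 \longrightarrow \pi_2\bigl(\mathcal{O}_X^{\circ}(-1)\bigr) \longrightarrow \pi_2(X) \xrightarrow{\;\partial\;} \pi_1(\mathbb{S}^1) \longrightarrow \pi_1\bigl(\mathcal{O}_X^{\circ}(-1)\bigr) \longrightarrow 0,
\]
in which the last nonzero term already vanishes by Theorem~\ref{thm: fundamental group complex}(i). The key step is to identify the connecting homomorphism $\partial$ with evaluation against the Euler class of the circle bundle; that class is, up to sign, the restriction along the Segre embedding of $c_1\bigl(\mathcal{O}_{\PP(W_1 \otimes \dots \otimes W_d)}(1)\bigr) = h_1 + \dots + h_d \in H^2(X;\mathbb{Z})$, where $h_i$ is the hyperplane class of $\PP W_i$. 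Pairing this class against the natural basis of $\pi_2(X) \cong H_2(X;\mathbb{Z}) \cong \mathbb{Z}^d$ gives $\partial$ explicitly and pins down $\pi_2$. I expect the identification of the connecting map via the Chern class of the Segre tautological bundle to be the only nontrivial step; everything else is a routine application of the long exact sequence of a fibration combined with standard Hopf fibration calculations.
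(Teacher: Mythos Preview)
Your approach via the long exact sequence of the fibration $\CC\setminus\{0\}\to\mathcal{O}_X^\circ(-1)\to X$ is exactly the one the paper uses, and for $k\ge 3$ the two arguments coincide and are correct.

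For $k=2$ you are more careful than the paper, and in fact your computation exposes an error in the stated result. You correctly isolate the exact segment
\[
0\longrightarrow\pi_2\bigl(\mathcal{O}_X^\circ(-1)\bigr)\longrightarrow\pi_2(X)\xrightarrow{\;\partial\;}\pi_1(\CC\setminus\{0\})\longrightarrow\pi_1\bigl(\mathcal{O}_X^\circ(-1)\bigr)\longrightarrow 0
\]
and identify $\partial$, up to sign, with pairing against $c_1(\mathcal{O}(1)|_X)=h_1+\dots+h_d$. But this map $\mathbb{Z}^d\to\mathbb{Z}$, $(a_1,\dots,a_d)\mapsto a_1+\dots+a_d$, is \emph{surjective} --- exactness at $\pi_1(\CC\setminus\{0\})$ together with $\pi_1(\mathcal{O}_X^\circ(-1))=0$ from Theorem~\ref{thm: fundamental group complex}(i) already forces this --- so its kernel is $\mathbb{Z}^{d-1}$, not $\mathbb{Z}^d$. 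An independent check: the map $(v_1,\dots,v_d)\mapsto v_1\otimes\dots\otimes v_d$ exhibits $\prod_i(W_i\setminus\{0\})$ as a principal $(\CC^*)^{d-1}$-bundle over the rank-one locus, and since $\pi_1$ and $\pi_2$ of $\prod_i\mathbb{S}^{2n_i-1}$ both vanish, its long exact sequence gives $\pi_2(\{\rank=1\})\cong\pi_1\bigl((\CC^*)^{d-1}\bigr)=\mathbb{Z}^{d-1}$ directly. The paper's proof asserts $\pi_k(\mathcal{O}_X^\circ(-1))\cong\pi_k(X)$ for all $k\ge 2$ on the basis of the claim that ``$\pi_1(\CC\setminus\{0\})$ is isomorphic to $\pi_1(\mathcal{O}_X^\circ(-1))$,'' which is false (the latter group is trivial). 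Your argument is the right one; it simply does not yield the $\mathbb{Z}^d$ in the statement, and the stated value should be $\mathbb{Z}^{d-1}$.
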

\begin{proof}
By Theorem~\ref{thm:homlongseq}, the fiber bundle $\CC \setminus \{0\} \to \mathcal{O}_{X}^{\circ} (-1) \to X$ yields the long exact sequence
\[
\cdots\to  \pi_{k}(\CC \setminus \{0\}) \to \pi_k (\mathcal{O}_{X}^{\circ} (-1)) \to \pi_k(X) \to  \pi_{k-1}(\CC \setminus \{0\})   \to \cdots.
\]
As $\pi_k(\CC \setminus \{0\}) = 0$ for all $k\ge 2$, and $\pi_1({\mathbb{C}\setminus \{0\}})$ is isomorphic to $\pi_1 (\mathcal{O}^\circ_X(-1))$, we get
\begin{equation}\label{eqn:higher homotopy of rank 1}
\pi_k \bigl(\{ A \in W_1 \otimes \dots \otimes W_d : \rank(A) =1\}\bigr)  \cong \pi_k(X) \cong \prod\nolimits_{j=1}^d \pi_k (\mathbb{P}W_j)
\end{equation}
for all $k \ge 2$, as required.
From the fiber bundle $\mathbb{S}^1 \to \mathbb{S}^{2n+1} \to \mathbb{CP}^n $ we obtain\footnote{When $n=1$, we may identify $\mathbb{CP}^1$ with $\mathbb{S}^2$ topologically, implying that $\pi_k(\mathbb{CP}^1) = \pi_k(\mathbb{S}^2)$ for all $k=1,2,\dots$. For example, we have $\pi_1(\mathbb{CP}^1) = 0$ and $\pi_2 (\mathbb{CP}^1) \cong \pi_3(\mathbb{CP}^1) \cong \mathbb{Z}$.}
\begin{equation}\label{eqn:homotopy groups of complex projective spaces}
\pi_k(\mathbb{CP}^n) \cong \begin{cases}
0 &\text{if}\; k=1\; \text{or}\; 3\le k \le 2n, \\
\mathbb{Z} &\text{if}\; k=2\; \text{or}\; 2n+1,\\
\pi_{k}(\mathbb{S}^{2n+1}) &\text{if}\; k \ge 2n+2.
\end{cases}
\end{equation}
Combined with \eqref{eqn:higher homotopy of rank 1}, we obtain the required higher homotopy groups for the set of complex  rank-one tensors.
\end{proof}

\begin{theorem}[Higher homotopy groups of identifiable complex  rank-two tensors]\label{prop:vanishing higher homotopy group of complex identifiable rank 2}
Let $d \ge 3$ and $W_1, \dots, W_d$ be complex vector spaces of dimensions $ n_1 \le \dots \le n_d$ with
\[
n_1 + \dots + n_{d-2} \ge d.
\]
We have 
\[
\pi_2 \bigl(\{ A \in W_1 \otimes \dots \otimes W_d : \rank(A) =2, \; A \;\text{identifiable}\} \bigr)  =  \mathbb{Z}^{2d}.
\]
Let $k$ be such that
\[
1 < k/2 \le \Bigl(\sum\nolimits_{j=1}^{d-2} n_j\Bigr) - d+ 1.
\]
Then
\[
\pi_k \bigl(\{ A \in W_1 \otimes \dots \otimes W_d : \rank(A) =2, \; A \;\text{identifiable}\} \bigr)  \cong \prod\nolimits_{j=1}^d \pi_k(\mathbb{S}^{2n_j-1})^2.
\]
In particular, if  $d \ge 4$ and $3 \le k \le 2(n_1-1)$ or $d=3$,  $n_1\ge 3$, and $3 \le k \le 2(n_1-2)$, then
\[
\pi_k \bigl(\{ A \in W_1 \otimes \dots \otimes W_d : \rank(A) =2, \; A \;\text{identifiable}\} \bigr)
=  0.
\]
\end{theorem}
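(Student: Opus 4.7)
The plan is to apply Proposition~\ref{prop:ridenfungp} with $X=\Seg(\PP W_1 \times \dots \times \PP W_d)$ and $r=2$, then identify $\pi_k((\widehat X\setminus\{0\})^2)$ with the product of sphere homotopy groups appearing in the statement. Since $d\ge 3$ and each $n_j\ge 2$, a general rank-two tensor has a unique decomposition, so $X$ is $2$-identifiable and only the codimension bound \eqref{eq:codimOx} needs checking. From the description of the singular locus of $\sigma_2(X)$ used in the proof of Theorem~\ref{thm: fundamental group complex}, one has
\[
\codim_{\CC}\bigl(s_2^{-1}(D_2),(\widehat X \setminus\{0\})^2\bigr)=\sum_{j=1}^{d-2}(n_j-1),
\]
so by \eqref{eq:codimRC} the real codimension is $c=2\sum_{j=1}^{d-2}n_j-2d+4$. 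The hypothesis $\sum_{j=1}^{d-2}n_j\ge d$ gives $c\ge 4$, and the range $1<k/2\le \sum_{j=1}^{d-2}n_j-d+1$ is exactly $2\le k\le c-2$, so Proposition~\ref{prop:ridenfungp} applies and yields
\[
\pi_k\bigl(s_2(X)\setminus D_2\bigr)\cong \pi_k\bigl((\widehat X\setminus\{0\})^2\bigr)=\pi_k(\widehat X\setminus\{0\})^2.
\]

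Next I would compute $\pi_k(\widehat X\setminus\{0\})$ by essentially repeating the argument in the proof of Theorem~\ref{prop:vaninshig higher homotopy of rank one complex}. The projection $p_2\colon \mathcal{O}_X^{\circ}(-1)\to\widehat X\setminus\{0\}$ is a homeomorphism onto its image (the set of rank-one tensors together with the full line structure), and the fiber bundle $\CC\setminus\{0\}\to \mathcal{O}_X^{\circ}(-1)\to X$ combined with $\pi_k(\CC\setminus\{0\})=0$ for $k\ge 2$ gives, via Theorem~\ref{thm:homlongseq}, the isomorphism $\pi_k(\widehat X\setminus\{0\})\cong \pi_k(X)=\prod_{j=1}^d\pi_k(\mathbb{CP}^{n_j-1})$ for all $k\ge 2$.

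The conclusion follows by plugging in the homotopy groups of $\mathbb{CP}^{n_j-1}$ from \eqref{eqn:homotopy groups of complex projective spaces}. For $k=2$, each factor contributes $\mathbb{Z}$, giving $\pi_2=\mathbb{Z}^{2d}$. For $k\ge 3$, the fibration $\mathbb{S}^1\to\mathbb{S}^{2n_j-1}\to\mathbb{CP}^{n_j-1}$ forces $\pi_k(\mathbb{CP}^{n_j-1})\cong \pi_k(\mathbb{S}^{2n_j-1})$, producing the product formula. The vanishing statements then reduce to $\pi_k(\mathbb{S}^{2n_j-1})=0$ for $k<2n_j-1$, i.e.\ for $k\le 2n_1-2$. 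One must check that the bound $k\le 2(n_1-1)$ (resp.\ $k\le 2(n_1-2)$) is compatible with the codimension-imposed range $k/2\le \sum_{j=1}^{d-2}n_j-d+1$: for $d=3$ the sum equals $n_1$, so the tighter bound $k\le 2(n_1-2)$ is required; for $d\ge 4$, the inequalities $n_j\ge 2$ yield $\sum_{j=1}^{d-2}n_j\ge n_1+2(d-3)\ge n_1+(d-2)$, so $k\le 2(n_1-1)$ automatically lies in range.

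The main subtlety (rather than an obstacle) is this last bookkeeping: reconciling the codimension-driven range coming from Proposition~\ref{prop:ridenfungp} with the sphere-dimension range in which $\pi_k(\mathbb{S}^{2n_j-1})$ vanishes, and correctly case-splitting on $d=3$ versus $d\ge 4$. Everything else is a direct assembly of earlier results.
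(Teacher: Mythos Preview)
Your proposal is correct and follows essentially the same approach as the paper: compute the real codimension $c$ of $s_2^{-1}(D_2)$, invoke Proposition~\ref{prop:ridenfungp}, and then identify $\pi_k(\widehat X\setminus\{0\})$ with $\prod_j\pi_k(\mathbb{CP}^{n_j-1})$ via the argument of Theorem~\ref{prop:vaninshig higher homotopy of rank one complex}. In fact your codimension formula $c=2\sum_{j=1}^{d-2}n_j-2(d-2)$ is the correct one (the paper's displayed formula drops a factor of $2$ on the $(d-2)$ term, though its stated range for $k$ is consistent with your value), and your explicit verification that the vanishing ranges $k\le 2(n_1-1)$ for $d\ge 4$ and $k\le 2(n_1-2)$ for $d=3$ fit inside $k\le c-2$ is more careful than the paper, which simply asserts the conclusion.
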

\begin{proof}
Let
\[
c \coloneqq \codim_{\mathbb{R}}\bigl(s_2^{-1}(D_2),(\widehat{X}\setminus \{0\})^2\bigr) = 2\Bigl(\sum\nolimits_{j=1}^{d-2} n_j \Bigr)-(d-2).
\]
By Proposition~\ref{prop:ridenfungp}, if $c\ge 4$ and $2\le k \le c-2$, then
\[
\pi_k (s_2(X) \setminus D_2) \cong \pi_k (\widehat{X} \setminus \{0\})^{2},
\]
and since  $\widehat{X}\setminus \{0\} $ is exactly the set of complex rank-one tensors, by  \eqref{eqn:higher homotopy of rank 1},
\begin{equation}\label{eqn:higher homotopy group rank 2}
\pi_k (s_2(X) \setminus D_2) \cong \prod\nolimits_{j=1}^d \pi_k(\mathbb{P}W_j)^2.
\end{equation}
By \eqref{eqn:homotopy groups of complex projective spaces} and \eqref{eqn:higher homotopy group rank 2}, we obtain the $k$th homotopy group of the set of  identifiable complex rank-two tensors for $2\le k \le c-2$, assuming that $c\ge 4$.
\end{proof}

\subsection{Fundamental and higher homotopy groups of real rank-$r$ tensors}

We now turn our attention to the real case, using  ideas similar to those used in the complex case: We will consider a fiber bundle and a double covering for real rank-one tensors and identifiable real rank-two tensors respectively. From these geometric constructions, we will calculate the  homotopy groups of these real low rank tensors: Theorems~\ref{thm: fundamental group real},  \ref{prop:vanishing homotopy group of rank one real tensors}, and \ref{prop:vanishing homotopy group of real identifiable rank two tensors} are respectively the real analogues of Theorems~\ref{thm: fundamental group complex},  \ref{prop:vaninshig higher homotopy of rank one complex}, and \ref{prop:vanishing higher homotopy group of complex identifiable rank 2}. As usual, throughout this section, we will assume that all real vector spaces have (real) dimensions at least two.
\begin{theorem}[Fundamental groups of real tensor rank]\label{thm: fundamental group real}
Let $d \ge 3$ and $V_1, \dots, V_d$ be real vector spaces of real dimensions $n_1, \dots, n_d$. Let
$m \coloneqq \#\{ i : \dim_\RR(V_i) = 2\}$.
\begin{enumerate}[\upshape (i)]
\item\label{pi1rank1} The set of rank-one real tensors has  fundamental group
\[
\pi_1 \bigl(\{ A \in V_1 \otimes \dots \otimes V_d : \rank(A) =1\}\bigr) =
\begin{cases}
\mathbb{Z}^d &\text{if}\; m = d,\\
\mathbb{Z}^m \times (\mathbb{Z}/2\mathbb{Z})^{d - m - 1} &\text{if}\; 0 \le m < d.
\end{cases}
\]\item Let $n_1 \le \dots \le n_d$ and $(n_1 - 1) + \dots + (n_{d-2} - 1) > 2$. Then the set of rank-two identifiable real tensors has fundamental group
\begin{multline*}
\pi_1 \bigl(\{ A \in V_1 \otimes \dots \otimes V_d : \rank(A) =2, \; A \;\text{is identifiable}\}\bigr) \\
= 
\begin{cases}
\mathbb{Z}^{2d} \rtimes \mathbb{Z}/2\mathbb{Z} &\text{if}\; m = d,\\
(\mathbb{Z}^{2m} \times (\mathbb{Z}/2\mathbb{Z})^{2d - 2m - 2}) \rtimes \mathbb{Z}/2\mathbb{Z} &\text{if}\; 0 \le m < d.
\end{cases}
\end{multline*}

\end{enumerate}
\end{theorem}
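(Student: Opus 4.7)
My plan parallels the complex proof of Theorem~\ref{thm: fundamental group complex}(i) but, since $\mathbb{R}^\times$ is disconnected unlike $\mathbb{C}^\times$, the rank-one locus is better realized as a quotient of spheres by a free finite-group action than as an $\mathbb{F}^\times$-bundle. The continuous surjection
\[
\Psi : \prod\nolimits_{i=1}^{d} \mathbb{S}^{n_i-1} \times \mathbb{R} \to \{A \in V_1\otimes\cdots\otimes V_d : \rank(A)=1\}, \qquad (u_1,\dots,u_d,t) \mapsto e^t\, u_1\otimes\cdots\otimes u_d,
\]
has fiber equal to the orbit of the free action of
\[
A \coloneqq \{(\epsilon_1,\dots,\epsilon_d) \in \{\pm1\}^d : \epsilon_1\cdots\epsilon_d = 1\} \cong (\mathbb{Z}/2\mathbb{Z})^{d-1},
\]
which flips signs on the sphere factors and acts trivially on the $\mathbb{R}$-factor. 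Hence $\{A : \rank(A)=1\}$ is homotopy equivalent to $\bigl(\prod_i \mathbb{S}^{n_i-1}\bigr)/A$, and the resulting regular covering yields the short exact sequence
\[
1 \to \mathbb{Z}^m \to \pi_1\bigl(\{A : \rank(A)=1\}\bigr) \to (\mathbb{Z}/2\mathbb{Z})^{d-1} \to 1,
\]
where $\mathbb{Z}^m = \pi_1(\prod_i \mathbb{S}^{n_i-1})$ because $\pi_1(\mathbb{S}^{n_i-1}) = \mathbb{Z}$ iff $n_i = 2$ and vanishes otherwise.

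To pin down this extension I would take the further quotient by the full group $A' = (\mathbb{Z}/2\mathbb{Z})^d$ acting componentwise; this quotient is $\prod_i \mathbb{RP}^{n_i-1}$. Since $[A':A]=2$, the set of rank-one real tensors is a connected double cover of $\prod_i \mathbb{RP}^{n_i-1}$ classified by the homomorphism
\[
\omega : \pi_1\bigl(\prod\nolimits_i \mathbb{RP}^{n_i-1}\bigr) = \mathbb{Z}^m \times (\mathbb{Z}/2\mathbb{Z})^{d-m} \longrightarrow A'/A = \mathbb{Z}/2\mathbb{Z}, \qquad (a_i,b_j) \mapsto \sum\nolimits_i a_i + \sum\nolimits_j b_j \bmod 2,
\]
since each per-factor generator of $\pi_1(\mathbb{RP}^{n_i-1})$ lifts through $\mathbb{S}^{n_i-1} \to \mathbb{RP}^{n_i-1}$ to a path ending at the antipode, which corresponds to the $i$-th basis vector of $A'$. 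Thus $\pi_1(\{A : \rank(A)=1\}) = \ker(\omega)$: for $m=d$ this is an index-two sublattice of $\mathbb{Z}^d$, necessarily free abelian of rank $d$, hence $\cong \mathbb{Z}^d$; for $m<d$ one $\mathbb{Z}/2\mathbb{Z}$-coordinate is uniquely determined by the parity constraint, and projecting it out exhibits $\ker(\omega) \cong \mathbb{Z}^m \times (\mathbb{Z}/2\mathbb{Z})^{d-m-1}$.

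\textbf{Part (ii).} I would apply Proposition~\ref{prop:ridenfungp} over $\mathbb{F} = \mathbb{R}$ with $X = \Seg(\mathbb{P}V_1 \times \cdots \times \mathbb{P}V_d)$ and $r=2$. Generic real rank-two identifiability and the description of the non-identifiable locus $D_2$ transfer from the complex case in the proof of Theorem~\ref{thm: fundamental group complex}, since $D_2$ is defined by the same real algebraic conditions; Zariski density of the real points then gives that the real codimension of $s_2^{-1}(D_2)$ in $(\widehat{X}(\mathbb{R})\setminus\{0\})^{2}$ equals its algebraic codimension $(n_1-1)+\cdots+(n_{d-2}-1)$, which exceeds $2$ by hypothesis. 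Proposition~\ref{prop:ridenfungp} then gives
\[
\pi_1\bigl(\{A : \rank(A)=2,\; A \text{ identifiable}\}\bigr) \cong \pi_1\bigl(\widehat{X}(\mathbb{R}) \setminus \{0\}\bigr)^{2} \rtimes \mathfrak{S}_2,
\]
and substituting the formula from part~(i) yields the two stated cases. The main obstacle is the extension problem in Part~(i): the short exact sequence alone does not determine the group, so the classifying homomorphism $\omega$ of the double cover must be computed explicitly; once that is done, the rest is routine covering-space bookkeeping.
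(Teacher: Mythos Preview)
Your proof is correct and, despite the different packaging in Part~(i), lands on the same computation as the paper. The paper uses the long exact sequence of the fiber bundle $\mathbb{R}\setminus\{0\} \to \mathcal{O}_X^\circ(-1) \to X$ with $X = \Seg(\PP V_1 \times \cdots \times \PP V_d) \cong \prod_i \mathbb{RP}^{n_i-1}$, obtaining
\[
0 \to \pi_1(\mathcal{O}_X^\circ(-1)) \to \pi_1(X) \to \pi_0(\mathbb{R}\setminus\{0\}) \to 0,
\]
which exhibits $\pi_1$ of the rank-one locus as the kernel of a map $\mathbb{Z}^m \times (\mathbb{Z}/2\mathbb{Z})^{d-m} \to \mathbb{Z}/2\mathbb{Z}$. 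Your route through the sphere product and the quotient by $A \cong (\mathbb{Z}/2\mathbb{Z})^{d-1}$ is a repackaging of the same data: since $\mathbb{R}^\times \simeq \{\pm 1\}$, the bundle $\mathcal{O}_X^\circ(-1)$ is, up to homotopy, precisely your double cover $(\prod_i \mathbb{S}^{n_i-1})/A \to \prod_i \mathbb{RP}^{n_i-1}$, and the paper's connecting homomorphism is exactly your $\omega$. What your version buys is explicitness: the paper simply asserts the value of $\pi_1(\mathcal{O}_X^\circ(-1))$ from the short exact sequence without identifying \emph{which} index-two subgroup arises, whereas you actually compute $\omega$ on generators and read off $\ker(\omega)$. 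For Part~(ii), your argument is identical to the paper's.
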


\begin{proof}
Let $X = \Seg (\PP V_1 \times \dots \times \PP V_d)$ and let $\mathcal{O}^{\circ}_{X}(-1)$ be the bundle in \eqref{eq: nonzerotautological} with $S = X$. 
\begin{enumerate}[\upshape (i)]
\item As in the proof of the complex case in Theorem~\ref{thm: fundamental group complex}, the projection  $p_2 \colon \mathcal{O}_{X}^{\circ} (-1) \to V_1 \otimes \cdots \otimes V_d$ is a homeomorphism and it suffices to determine  the fundamental group of $\mathcal{O}_{X}^{\circ} (-1)$. The fiber bundle
\[
\RR \setminus \{0\} \to \mathcal{O}_{X}^{\circ} (-1) \to X
\]
induces the long exact sequence
\[
0 \to \pi_1(\mathcal{O}_{X}^{\circ} (-1)) \to \pi_1 (X) \to \pi_0 (\RR \setminus \{0\}) \to 0.
\]
Since $\pi_1(X) = \mathbb{Z}^m \times (\mathbb{Z}/2\mathbb{Z})^{d - m}$ and $\pi_0 (\RR \setminus \{0\}) = \mathbb{Z}/2\mathbb{Z}$, we get
\[
\pi_1(\mathcal{O}_{X}^{\circ} (-1)) =
\begin{cases}
\mathbb{Z}^d &\text{if}\; m = d,\\
\mathbb{Z}^m \times (\mathbb{Z}/2\mathbb{Z})^{d - m - 1} &\text{if}\; 0 \le m < d.
\end{cases}
\]

\item Since
\[
\codim_{\RR} \big( s_2^{-1} (D_2), (\widehat{X} \setminus \{0\})^{2} \big) = (n_1 - 1) + \dots + (n_{d-2} - 1) > 2,
\]
applying Proposition~\ref{prop:ridenfungp} with the fundamental group obtained in part~\eqref{pi1rank1} gives us the required result. \qedhere
\end{enumerate}
\end{proof}

\begin{theorem}[Higher homotopy groups of real rank-one tensors]\label{prop:vanishing homotopy group of rank one real tensors}
Let $d \ge 3$ and $V_1, \dots, V_d$ be real vector spaces of real dimensions $n_1 \le \dots \le n_d$.
For any $k\ge 2$, we have 
\[
\pi_k \bigl(\{ A \in V_1 \otimes \dots \otimes V_d : \rank(A) =1\}\bigr) \cong \prod\nolimits_{j=1}^d \pi_k(\mathbb{S}^{n_j-1}).
\]
In particular, if $2 \le k \le n_1 - 1$, then 
\[
\pi_k \bigl(\{ A \in V_1 \otimes \dots \otimes V_d : \rank(A) =1\}\bigr) = 0. 
\]
\end{theorem}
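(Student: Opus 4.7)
The plan is to mirror the strategy used for Theorems~\ref{thm: fundamental group complex}\eqref{pi1crank1} and \ref{prop:vaninshig higher homotopy of rank one complex}, replacing the complex tautological bundle construction with its real analogue. Let $X(\RR) = \Seg(\PP V_1 \times \dots \times \PP V_d)$ and consider the bundle $\mathcal{O}^{\circ}_{X(\RR)}(-1)$ from \eqref{eq: nonzerotautological}. The projection $p_2$ restricts to a homeomorphism between $\mathcal{O}^{\circ}_{X(\RR)}(-1)$ and the set of rank-one real tensors, exactly as in the complex case. So it suffices to compute $\pi_k$ of the total space of the fiber bundle
\[
\RR \setminus \{0\} \longrightarrow \mathcal{O}^{\circ}_{X(\RR)}(-1) \longrightarrow X(\RR).
\]

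Apply Theorem~\ref{thm:homlongseq} to this bundle. Since $\RR \setminus \{0\}$ is homotopy equivalent to $\mathbb{S}^0$, one has $\pi_j(\RR \setminus \{0\}) = 0$ for all $j \ge 1$. The relevant segment of the long exact sequence for $k \ge 2$ is
\[
0 = \pi_k(\RR \setminus \{0\}) \to \pi_k(\mathcal{O}^{\circ}_{X(\RR)}(-1)) \to \pi_k(X(\RR)) \to \pi_{k-1}(\RR \setminus \{0\}) = 0,
\]
which gives an isomorphism $\pi_k(\mathcal{O}^{\circ}_{X(\RR)}(-1)) \cong \pi_k(X(\RR))$. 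Since $X(\RR) = \PP V_1 \times \dots \times \PP V_d$, this becomes $\prod_{j=1}^d \pi_k(\PP V_j)$.

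Next I would pass from $\pi_k(\PP V_j) = \pi_k(\mathbb{RP}^{n_j - 1})$ to $\pi_k(\mathbb{S}^{n_j - 1})$ using the double cover $\mathbb{Z}/2\mathbb{Z} \to \mathbb{S}^{n_j - 1} \to \mathbb{RP}^{n_j - 1}$. Applying Theorem~\ref{thm:homlongseq} again and noting that $\pi_j(\mathbb{Z}/2\mathbb{Z}) = 0$ for $j \ge 1$, we obtain
\begin{equation}\label{eqn:homotopty group of real projective spaces}
\pi_k(\mathbb{RP}^{n_j - 1}) \cong \pi_k(\mathbb{S}^{n_j - 1}) \qquad \text{for all } k \ge 2,
\end{equation}
which is valid also when $n_j = 2$ since $\mathbb{RP}^1 \cong \mathbb{S}^1$. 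Stringing these isomorphisms together yields the claimed formula. The final vanishing claim is immediate from the well-known fact that $\pi_k(\mathbb{S}^n) = 0$ whenever $k < n$, applied with $n = n_j - 1 \ge n_1 - 1 \ge k$ for every $j$.

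None of the steps presents a serious obstacle; the only mild subtlety is verifying that the real version of \eqref{eq: nonzerotautological} actually fibers over $X(\RR)$ with fiber $\RR \setminus \{0\}$ (rather than $\CC \setminus \{0\}$), and that $p_2$ is still a homeomorphism on the total space in the real setting. Both are routine checks, and once handled the argument parallels the complex proof step-for-step with $\mathbb{S}^1$ replaced by $\mathbb{S}^0$ at the base of the homotopy computation.
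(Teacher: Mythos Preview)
Your proposal is correct and follows essentially the same route as the paper's proof: both use the fiber bundle $\RR\setminus\{0\}\to\mathcal{O}^{\circ}_{X}(-1)\to X$ together with the long exact sequence (Theorem~\ref{thm:homlongseq}) to reduce to $\prod_j \pi_k(\PP V_j)$, and then invoke the double cover $\mathbb{S}^{n_j-1}\to\mathbb{RP}^{n_j-1}$ to identify these with $\pi_k(\mathbb{S}^{n_j-1})$ for $k\ge 2$. One small slip worth tightening: in your final sentence you write $n_j-1\ge k$, but the vanishing $\pi_k(\mathbb{S}^n)=0$ requires the strict inequality $k<n$, so the argument as written only yields vanishing for $k\le n_1-2$ (the stated range $k\le n_1-1$ would give $\pi_{n_1-1}(\mathbb{S}^{n_1-1})=\mathbb{Z}$).
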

\begin{proof}
Let $X = \Seg (\PP V_1 \times \dots \times \PP V_d)$. The fiber bundle  $\RR \setminus \{0\} \to \mathcal{O}_{X}^{\circ} (-1) \to X$ induces an isomorphism
\[
\pi_k \bigl(\{ A \in V_1 \otimes \dots \otimes V_d : \rank(A) =1\}\bigr) \cong  \pi_k\bigl(\mathcal{O}^\circ_{X}(-1)\bigr) \cong \pi_k (X) \cong \prod\nolimits_{j=1}^d \pi_k(\mathbb{P}V_j)
\]
for all $k\ge 2$ as $\pi_k(\mathbb{R}\setminus\{0\}) = 0$. Recall that homotopy groups of real projective spaces are isomorphic to those of spheres, i.e., the double cover $\mathbb{S}^{n} \to \mathbb{RP}^n$  gives isomorphism $\pi_k (\mathbb{RP}^n) \cong \pi_k(\mathbb{S}^n)$ for all $k \ge 2$. 
For easy reference, a list\footnote{When $n=1$,  $\pi_1(\mathbb{RP}^1) \cong \mathbb{Z}$ and all higher homotopy groups vanish.} of homotopy groups of real projective $n$-spaces for $n\ge 2$ is as follows:
\begin{equation}\label{eqn:homotopty group of real projective spaces}
\pi_k (\mathbb{RP}^n) \cong \begin{cases}
\mathbb{Z}_2 &\text{if}\; k= 1,\\
0 &\text{if}\; 2 \le k \le n-1,\\
\mathbb{Z} &\text{if}\; k=n,\\
\pi_k(\mathbb{S}^n) &\text{if}\; n+1 \le k. 
\end{cases}
\end{equation}\par \vspace{-1.5\baselineskip}
\qedhere
\end{proof}

The homotopy groups of identifiable real rank-two tensors follows directly from Proposition~\ref{prop:ridenfungp} with $r =2$.
\begin{theorem}[Higher homotopy groups of identifiable real rank-two tensors]\label{prop:vanishing homotopy group of real identifiable rank two tensors}
Let $d \ge 3$ and $V_1, \dots, V_d$ be real vector spaces of real dimensions  $n_1 \le \dots \le n_d$ with
\[
 n_1 + \dots + n_{d-2} \ge d + 2.
\]
Let $k$ be such that
\[
2 \le k \le \Bigl(\sum\nolimits_{j=1}^{d-2} n_j \Bigr)- d.
\]
Then
\[
\pi_k \bigl(\{ A \in V_1 \otimes \dots \otimes V_d : \rank(A) =2, \; A \;\text{is identifiable}\}\bigr) \cong  \prod\nolimits_{j=1}^d \pi_k (\mathbb{S}^{n_j-1} )^2.
\]
In particular, if
\[
2 \le k \le \min \Bigl\{n_1-1, \Bigl(\sum\nolimits_{j=1}^{d-2} n_j \Bigr) -d \Bigr\},
\]
then 
\[
\pi_k \bigl(\{ A \in V_1 \otimes \dots \otimes V_d : \rank(A) =2, \; A \;\text{is identifiable}\}\bigr) = 0.
\]
\end{theorem}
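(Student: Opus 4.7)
The plan is to apply Proposition~\ref{prop:ridenfungp} with $r=2$ to the real Segre variety $X = \Seg(\PP V_1 \times \dots \times \PP V_d)$, and then plug in the computation of $\pi_k(\widehat{X}\setminus\{0\})$ given by Theorem~\ref{prop:vanishing homotopy group of rank one real tensors}. The set of identifiable real rank-two tensors is precisely $s_2(X)\setminus D_2$ in the notation of Section~\ref{sec:fundgprkr}, so once the codimension hypothesis of Proposition~\ref{prop:ridenfungp} is verified, everything else is formal.

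First I would compute the real codimension of $s_2^{-1}(D_2)$ in $(\widehat{X}\setminus\{0\})^2$. The description of $D_2$ comes from the singular locus of $\sigma_2(X)$ determined in \cite{MichalekOZ14}, exactly as used in the proof of Theorem~\ref{thm: fundamental group complex}(ii), and the same description remains valid over $\mathbb{R}$. A direct fiber count, performed over $\mathbb{R}$ as in the proof of Theorem~\ref{thm: fundamental group real}(ii), yields
\[
c \coloneqq \codim_{\RR}\bigl(s_2^{-1}(D_2),\, (\widehat{X}\setminus\{0\})^2\bigr) \;=\; \sum_{j=1}^{d-2}(n_j - 1) \;=\; \Bigl(\sum_{j=1}^{d-2} n_j\Bigr) - (d-2).
\]
The standing hypothesis $\sum_{j=1}^{d-2} n_j \ge d+2$ is precisely $c \ge 4$, and the range $2 \le k \le \bigl(\sum_{j=1}^{d-2} n_j\bigr) - d$ is precisely $2 \le k \le c-2$, so Proposition~\ref{prop:ridenfungp} applies.

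Invoking the proposition together with the product formula for homotopy groups gives
\[
\pi_k\bigl(s_2(X) \setminus D_2\bigr) \;\cong\; \pi_k\bigl((\widehat{X}\setminus\{0\})^2\bigr) \;\cong\; \pi_k\bigl(\widehat{X}\setminus\{0\}\bigr)^2.
\]
Since $\widehat{X}\setminus\{0\}$ is exactly the set of real rank-one tensors, Theorem~\ref{prop:vanishing homotopy group of rank one real tensors} gives $\pi_k(\widehat{X}\setminus\{0\}) \cong \prod_{j=1}^d \pi_k(\mathbb{S}^{n_j - 1})$ for all $k \ge 2$, and squaring yields the stated isomorphism. The \emph{in particular} vanishing statement then follows from the classical fact that $\pi_k(\mathbb{S}^{n-1}) = 0$ whenever $k < n-1$, applied to every factor.

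I do not anticipate a serious obstacle: the theorem is a formal combination of Proposition~\ref{prop:ridenfungp} and Theorem~\ref{prop:vanishing homotopy group of rank one real tensors}. The only mildly delicate point is the real codimension count for $s_2^{-1}(D_2)$, which is in fact simpler than its complex analogue used in Theorem~\ref{prop:vanishing higher homotopy group of complex identifiable rank 2}, since in the real case one does not need to insert the factor of two from \eqref{eq:codimRC}.
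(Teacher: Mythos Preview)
Your proposal is correct and follows exactly the paper's approach: the paper's entire proof is the single sentence ``follows directly from Proposition~\ref{prop:ridenfungp} with $r=2$,'' and you have simply unpacked that sentence, supplying the codimension count $c=\sum_{j=1}^{d-2}(n_j-1)$ from the proof of Theorem~\ref{thm: fundamental group real}(ii) and invoking Theorem~\ref{prop:vanishing homotopy group of rank one real tensors} for $\pi_k(\widehat{X}\setminus\{0\})$.
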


\section{Higher-order connectedness of symmetric tensor rank}\label{sec:hosymrank}

The remark that we made at the beginning of Section~\ref{sec:horank} also applies to symmetric tensor rank. Here we will again limit ourselves to symmetric rank-$r$ symmetric tensors where $r = 1,2$, or  $3$. The difficulty in extending these results to $r \ge 4$ is that the singular loci of the $r$th secant varieties of the Veronese  variety are still unknown for $r \ge 4$. Also, as in the previous section, two of our results, Propositions~\ref{prop:vanishing homotopy group of symmetric rank one complex tensors} and \ref{prop:homotopy group of symmetric rank one real tensors}, will be stated in the terms of homotopy groups of spheres.

\subsection{Fundamental and higher homotopy groups of complex symmetric rank-$r$ tensors}\label{sec:fundgpsymmrkr}

To deduce the fundamental group of the set of symmetric rank-$r$ symmetric tensors for small values of $r$, we apply the results in  Section~\ref{sec:fundgprkr} to the case where  $X = \nu_d(\PP W)$ is the Veronese variety, with $W$ a finite-dimensional vector space over $\mathbb{F} = \CC$ or $\RR$ of dimension at least two.

\begin{theorem}[Fundamental groups of complex symmetric tensor rank]\label{thm: fundamental group complex symmetric1}
Let $d \ge 3$ and $W$ be a complex vector space.
\begin{enumerate}[\upshape (i)]
\item  The set of symmetric rank-one complex symmetric tensors has fundamental group
\[
\pi_1 \bigl(\{A \in \mathsf{S}^d (W) : \rank_{\mathsf{S}} (A) = 1 \}\bigr) = 0.
\]

\item Let $d \ge 3$ and $n > 2$.The set of symmetric rank-two complex symmetric tensors has  fundamental group
\[
\pi_1 \bigl(\{A \in \mathsf{S}^d (W) : \rank_{\mathsf{S}} (A) = 2 \}\bigr) = \mathbb{Z}/2\mathbb{Z}.
\]

\item  Let $d > 3$ and $n > 2$.  The set of symmetric rank-three complex symmetric tensors has fundamental group 
\[
\pi_1\bigl(\{A \in \mathsf{S}^d (W) : \rank_{\mathsf{S}} (A) = 3 \}\bigr) = \mathfrak{S}_3.
\]
\end{enumerate}
\end{theorem}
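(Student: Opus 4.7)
The plan is to mirror, for the Veronese variety, the strategy that gave the fundamental group for tensor rank in Theorem~\ref{thm: fundamental group complex}: apply Proposition~\ref{prop:ridenfungp} with $X = \nu_d(\PP W)$, supplemented by a direct fiber-bundle computation in the rank-one case.

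For part (i) the set $\{A : \srank(A) = 1\}$ coincides with the punctured affine cone $\widehat{X}\setminus\{0\}$. Since $\nu_d \colon \PP W \to X$ is a homeomorphism, the tautological line bundle gives a fibration
\[
\CC \setminus \{0\} \longrightarrow \widehat{X}\setminus\{0\} \longrightarrow X \cong \PP W,
\]
and the long exact homotopy sequence produces
\[
\pi_2(\PP W) \xrightarrow{\partial} \pi_1(\CC \setminus \{0\}) \longrightarrow \pi_1(\widehat{X}\setminus\{0\}) \longrightarrow 0.
\]
Following the rank-one step of Theorem~\ref{thm: fundamental group complex}, I would identify $\partial$ by writing a generator of $\pi_1(\CC \setminus \{0\})$ in the explicit rank-one form $\lambda \cdot v^{\otimes d}$, translate this scalar loop into a loop in the simply connected space $W \setminus \{0\}$ (valid because $\dim_{\CC} W \ge 2$), and conclude that it is null-homotopic.

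For part (ii) I would apply Proposition~\ref{prop:ridenfungp} with $r = 2$. The three ingredients are: first, 2-identifiability of $\nu_d(\PP W)$ for $d \ge 3$, which is classical; second, identification of the set $D_2$ from \eqref{eq:nonunidecomppoints} --- by Proposition~\ref{prop:smoothiden} and the fact that genuine rank-two symmetric $d$-tensors are identifiable for $d \ge 3$, $D_2$ reduces to the symmetric rank-one locus $\widehat{X}$; third, the codimension estimate $\codim_\RR(s_2^{-1}(D_2), (\widehat{X}\setminus\{0\})^2) > 2$. For this last bound, a pair $(x_1, x_2) \in (\widehat{X}\setminus\{0\})^2$ satisfies $x_1 + x_2 \in \widehat{X}$ if and only if $x_1$ and $x_2$ are supported on a single line of $W$, a locus of complex codimension $n - 1$ and hence real codimension $2(n-1) > 2$ under the hypothesis $n > 2$. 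Combined with part (i), Proposition~\ref{prop:ridenfungp} yields $\pi_1 \cong 0 \rtimes \mathfrak{S}_2 \cong \mathbb{Z}/2\mathbb{Z}$.

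For part (iii) the same strategy goes through with $r = 3$, but each ingredient is more delicate. I would invoke generic 3-identifiability of $\nu_d(\PP W)$ for $d > 3$ and $n > 2$ (outside the Alexander--Hirschowitz defective triples), identify $D_3$ by combining Proposition~\ref{prop:smoothiden} with the description of the singular locus of $\widehat{\sigma}_3(\nu_d(\PP W))$ available in this range, and establish $\codim_\RR(s_3^{-1}(D_3), (\widehat{X}\setminus\{0\})^3) > 2$ via a Terracini-style dimension count in the spirit of Proposition~\ref{prop: realsymrnkc}. Proposition~\ref{prop:ridenfungp} then delivers $\pi_1 \cong 0 \rtimes \mathfrak{S}_3 \cong \mathfrak{S}_3$. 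The main obstacle is exactly this codimension bound for $r = 3$: pinning down $D_3$ requires a precise handle on the singular locus of $\sigma_3(\nu_d(\PP W))$, and the threshold $d > 3$ (rather than $d \ge 3$) is what rules out the defective configurations in which that singular locus becomes too large for the hypothesis \eqref{eq:codimOx} of Proposition~\ref{prop:ridenfungp} to hold.
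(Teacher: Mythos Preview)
Your overall strategy matches the paper's: a direct fiber-bundle argument for rank one, and Proposition~\ref{prop:ridenfungp} for ranks two and three, with $D_r$ identified via Proposition~\ref{prop:smoothiden} and knowledge of the singular locus of $\sigma_r(\nu_d(\PP W))$. Parts (i) and (ii) are essentially identical to the paper; in (ii) the paper invokes Kanev's result that $\Sing \sigma_2(X) = X$ rather than arguing identifiability directly, but your route works just as well.

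In part (iii), however, you have misdiagnosed the role of the hypothesis $d > 3$. It is \emph{not} needed to control the size of the singular locus or to make the codimension bound \eqref{eq:codimOx} hold: once one knows $D_3 = s_2(X)$, the preimage $s_3^{-1}(s_2(X))$ has complex codimension exactly $n-1$ in $(\widehat{X}\setminus\{0\})^3$ regardless of $d$, so the real codimension is $2(n-1) > 2$ by $n > 2$ alone, and no Terracini-style argument is required. The actual reason for $d > 3$ is to guarantee $D_3 = s_2(X)$ in the first place. By Han's result the singular locus of $\sigma_3(X)$ is $\sigma_2(X)$, and Proposition~\ref{prop:smoothiden} only certifies identifiability for \emph{smooth} rank-$3$ points; thus one must check that no point of $\widehat{\sigma}_2(X)$ has symmetric rank exactly $3$. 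Points of $\widehat{\sigma}_2(X)$ are either of rank $\le 2$ or lie on tangent lines to $X$, and the latter have symmetric rank $d$. When $d > 3$ this forces $\widehat{\sigma}_2(X) \cap s_3(X) = s_2(X)$, whence every rank-$3$ point is smooth and $D_3 = s_2(X)$; when $d = 3$ the tangent points themselves have rank $3$ and the identification fails.
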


\begin{proof}
\begin{enumerate}[\upshape (i)]
\item Let $\mathcal{O}^{\circ}_{X}(-1)$ be the bundle in \eqref{eq: nonzerotautological} with $S =  X = \nu_d(\PP W)$. The projection $p_2 \colon \mathcal{O}_{X}^{\circ} (-1) \to \mathsf{S}^d (W)$ defines a homeomorphism between $\mathcal{O}_{X}^{\circ} (-1)$ and the set of symmetric rank-one complex tensors. We have the following commutative diagram
\[
\begin{tikzpicture}[>=triangle 60]
\matrix[matrix of math nodes,column sep={60pt,between origins},row
sep={45pt,between origins},nodes={asymmetrical rectangle}] (s)
{
&|[name=S1]| \mathbb{S}^1 &|[name=S2n-1]| \mathbb{S}^{2n - 1} &|[name=PW]| \PP W \\
&|[name=C]| \CC \setminus \{0\} &|[name=O]| \mathcal{O}_{X}^{\circ} (-1) &|[name=X]| X \\
};
\path[overlay,->, font=\scriptsize,>=latex]
          (S1) edge (S2n-1)
          (S2n-1) edge (PW)
          (C) edge (O)
          (O) edge (X)
          (S1) edge node[auto] {$\nu_d$} (C)
          (S2n-1) edge (O)
          (PW) edge node[auto] {$\nu_d$} (X)
;
\end{tikzpicture}
\]
where $\mathbb{S}^1$ is the unit circle in $\CC$ and $\mathbb{S}^{2n - 1}$ is the unit sphere in $W$ after fixing an Hermitian metric on $W$. Thus $\mathcal{O}_{X}^{\circ} (-1)$ and $\mathbb{S}^{2n - 1}$ have the same homotopy type, which implies that $\pi_1(\mathcal{O}_{X}^{\circ} (-1)) = 0$.

\item When $r = 2$, the singular locus of $\sigma_2 (X)$ is $X$ by \cite[Theorem~3.3]{Kanev99:jmsny}. Thus by Proposition~\ref{prop:smoothiden}, $D_2$ as defined in \eqref{eq:nonunidecomppoints} equals $\widehat{X}$.  It follows from \eqref{eq:codimRC} that
\[
\codim_{\RR} \big( s_2^{-1}(D_2), (\widehat{X} \setminus \{0\})^{2} \big) = 2  \codim_{\CC} \big( s_2^{-1}(\widehat{X}), (\widehat{X} \setminus \{0\})^{2} \big) = 2(n - 1) > 2.
\]
By Proposition~\ref{prop:ridenfungp}, the required fundamental group is $\mathbb{Z}/2\mathbb{Z}$.

\item When $r = 3$, the singular locus of $\sigma_3 (X)$ is $\sigma_2 (X)$ by \cite{Han14:arXiv}. As $d > 3$, for any $x \in \widehat{\sigma}_2 (X)$,  we must have $\rank_{\mathsf{S}} (x) \neq 3$, which implies that $\widehat{\sigma}_2 (X) \cap s_3 (X) = s_2 (X)$. Thus any $x \in s_3 (X) \setminus s_2 (X)$  is a smooth point of $\widehat{\sigma}_3 (X)$. By Proposition~\ref{prop:smoothiden}, $D_3$ as defined in \eqref{eq:nonunidecomppoints} equals $s_2 (X)$. Since
\[
\codim_{\RR} \big( s_3^{-1}(D_3), (\widehat{X} \setminus \{0\})^{3} \big) = 2  \codim_{\CC} \big( s_3^{-1}(s_2(X)), (\widehat{X} \setminus \{0\})^{3} \big) = 2(n - 1) > 2,
\]
it follows from Proposition~\ref{prop:ridenfungp} that $\pi_1 (s_3(X) \setminus s_2(X)) = \mathfrak{S}_3$. \qedhere
\end{enumerate}
\end{proof}

For the higher homotopy groups, we combine  Proposition~\ref{prop:ridenfungp} with the long exact sequence of the fiber bundle $\CC \setminus \{0\} \to \mathcal{O}_{X}^{\circ} (-1) \to X$  obtained from Theorem~\ref{thm:homlongseq}  and employ the same argument as in the proofs of Theorems~\ref{prop:vaninshig higher homotopy of rank one complex}, \ref{prop:vanishing higher homotopy group of complex identifiable rank 2}, \ref{prop:vanishing homotopy group of rank one real tensors}, and  \ref{prop:vanishing homotopy group of real identifiable rank two tensors}. This gives us our next two results.

\begin{theorem}[Higher homotopy groups of complex symmetric rank-one tensors]\label{prop:vanishing homotopy group of symmetric rank one complex tensors}
Let $d \ge 3$ and $W$ be a complex vector space. Then
\[
\pi_2 \bigl(\{A \in \mathsf{S}^d (W) : \rank_{\mathsf{S}} (A) = 1 \}\bigr) = \mathbb{Z}.
\]
Let $k\ge 3$. Then
\[
\pi_k \bigl(\{A \in \mathsf{S}^d (W) : \rank_{\mathsf{S}} (A) = 1 \}\bigr) \cong  \pi_k(\mathbb{S}^{2n-1}).
\]
In particular,  if $3 \le k \le 2(n-1)$, then
\[
\pi_k \bigl(\{A \in \mathsf{S}^d (W) : \rank_{\mathsf{S}} (A) = 1 \}\bigr) = 0.
\]
\end{theorem}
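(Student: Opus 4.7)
The plan is to extend the commutative-diagram argument already given in Theorem~\ref{thm: fundamental group complex symmetric1}\upshape{(i)} for $\pi_1$ to all higher homotopy groups, mirroring the strategy used for the nonsymmetric case in Theorem~\ref{prop:vaninshig higher homotopy of rank one complex}. The main geometric input was already assembled in the proof of Theorem~\ref{thm: fundamental group complex symmetric1}\upshape{(i)}: the projection $p_2 \colon \mathcal{O}_X^{\circ}(-1) \to \mathsf{S}^d(W)$ is a homeomorphism onto the set of symmetric rank-one complex tensors, where $X = \nu_d(\PP W)$ is the Veronese variety. This reduces the problem to computing $\pi_k\bigl(\mathcal{O}_X^{\circ}(-1)\bigr)$ for all $k \ge 2$.

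Next I would apply Theorem~\ref{thm:homlongseq} to the fiber bundle
\[
\CC \setminus \{0\} \to \mathcal{O}_X^{\circ}(-1) \to X,
\]
which yields the long exact sequence
\[
\cdots \to \pi_k(\CC \setminus \{0\}) \to \pi_k\bigl(\mathcal{O}_X^{\circ}(-1)\bigr) \to \pi_k(X) \to \pi_{k-1}(\CC \setminus \{0\}) \to \cdots .
\]
Since $\CC \setminus \{0\}$ is homotopy equivalent to $\mathbb{S}^1$, we have $\pi_k(\CC \setminus \{0\}) = 0$ for every $k \ge 2$. Consequently $\pi_k\bigl(\mathcal{O}_X^{\circ}(-1)\bigr) \cong \pi_k(X)$ for all $k \ge 2$. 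Because $\nu_d \colon \PP W \to X$ is an isomorphism of projective varieties, $\pi_k(X) \cong \pi_k(\PP W) = \pi_k(\mathbb{CP}^{n-1})$.

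Finally I would invoke the identities recorded in \eqref{eqn:homotopy groups of complex projective spaces}, which follow from the long exact sequence of the Hopf fibration $\mathbb{S}^1 \to \mathbb{S}^{2n-1} \to \mathbb{CP}^{n-1}$ together with $\pi_k(\mathbb{S}^1) = 0$ for $k \ge 2$. These give $\pi_2(\mathbb{CP}^{n-1}) \cong \mathbb{Z}$ and $\pi_k(\mathbb{CP}^{n-1}) \cong \pi_k(\mathbb{S}^{2n-1})$ for $k \ge 3$, which produces exactly the stated values of $\pi_k$ for the set of symmetric rank-one complex tensors. The final "in particular" claim is then immediate from the classical vanishing $\pi_k(\mathbb{S}^{2n-1}) = 0$ for $1 \le k \le 2n-2$, which contains the range $3 \le k \le 2(n-1)$.

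There is no real obstacle here, since the geometry of the fibration was already carried out in Theorem~\ref{thm: fundamental group complex symmetric1}\upshape{(i)} and the dimension hypothesis $n \ge 2$ (assumed throughout) ensures that the fiber $\mathbb{S}^{2n-1}$ has dimension at least $3$, so the various exact-sequence arguments go through without degenerate cases. The only point requiring mild care is to observe that because $\nu_d$ is an embedding, the $\mathcal{O}(-1)$ bundle over $X$ is genuinely identified (as a fiber bundle) with the restriction used above, so that the long exact sequence applies verbatim.
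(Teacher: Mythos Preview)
For $k\ge 3$ your argument is correct and is exactly the paper's approach: the long exact sequence of $\CC\setminus\{0\}\to\mathcal{O}_X^{\circ}(-1)\to X$, together with $\pi_k(\CC\setminus\{0\})=\pi_{k-1}(\CC\setminus\{0\})=0$, gives $\pi_k\bigl(\mathcal{O}_X^{\circ}(-1)\bigr)\cong\pi_k(X)\cong\pi_k(\mathbb{CP}^{n-1})\cong\pi_k(\mathbb{S}^{2n-1})$, and the ``in particular'' clause follows.

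There is, however, a genuine gap at $k=2$. From $\pi_2(\CC\setminus\{0\})=0$ you only obtain that $\pi_2\bigl(\mathcal{O}_X^{\circ}(-1)\bigr)\to\pi_2(X)$ is injective; to conclude it is an isomorphism you must also check that the connecting homomorphism $\partial\colon\pi_2(X)\to\pi_1(\CC\setminus\{0\})\cong\mathbb{Z}$ vanishes, and you have not done this. In fact $\partial$ does \emph{not} vanish here: since $\nu_d^*\,\mathcal{O}_{\PP(\mathsf{S}^dW)}(-1)=\mathcal{O}_{\PP W}(-d)$, the bundle $\mathcal{O}_X^{\circ}(-1)\to X$ identifies via $\nu_d$ with $\mathcal{O}_{\PP W}^{\circ}(-d)\to\PP W$, and $\partial$ is multiplication by $d$. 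Equivalently, $v\mapsto v^{\otimes d}$ exhibits $W\setminus\{0\}$ as a free $d$-fold cover of the set of symmetric rank-one tensors (the deck group being the $d$th roots of unity), so that set is homotopy equivalent to the lens space $\mathbb{S}^{2n-1}/(\mathbb{Z}/d\mathbb{Z})$, whence $\pi_1=\mathbb{Z}/d\mathbb{Z}$ and $\pi_2=\pi_2(\mathbb{S}^{2n-1})=0$. The paper's own sketch shares this oversight (as does the claim $\pi_1=0$ in Theorem~\ref{thm: fundamental group complex symmetric1}\textup{(i)}); run carefully, your exact sequence gives $\pi_2\cong\ker(\times d\colon\mathbb{Z}\to\mathbb{Z})=0$, not $\mathbb{Z}$.
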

\begin{theorem}[Higher homotopy groups of complex symmetric rank-two and three tensors]\label{prop:vanishing of higher homotopy group of complex symmetric tensors}
Let $d \ge 3$ and $W$ be a complex vector space. Then
\begin{align*}
\pi_2 \bigl(\{A \in \mathsf{S}^d (W) : \rank_{\mathsf{S}} (A) = 2 \}\bigr) &= \mathbb{Z}^2,\\
\pi_2 \bigl(\{A \in \mathsf{S}^d (W) : \rank_{\mathsf{S}} (A) = 3 \}\bigr) &=  \mathbb{Z}^3.
\end{align*}
Let $3 \le k \le 2(n-2)$. Then
\[
\pi_k \bigl(\{A \in \mathsf{S}^d (W) : \rank_{\mathsf{S}} (A) = 2 \}\bigr) = \pi_k \bigl(\{A \in \mathsf{S}^d (W) : \rank_{\mathsf{S}} (A) = 3 \}\bigr) = 0.
\]
\end{theorem}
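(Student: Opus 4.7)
The plan is to apply Proposition~\ref{prop:ridenfungp} to the Veronese variety $X = \nu_d(\PP W)$ for $r = 2$ and $r = 3$, reusing the identifiability and codimension analyses already carried out in the proof of Theorem~\ref{thm: fundamental group complex symmetric1}, and then to invoke Theorem~\ref{prop:vanishing homotopy group of symmetric rank one complex tensors} to compute the homotopy groups of the factor $\widehat{X}\setminus\{0\}$.

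For $r = 2$, the proof of Theorem~\ref{thm: fundamental group complex symmetric1} showed that $D_2 = \widehat{X}$ and
\[
c \coloneqq \codim_{\RR}\bigl(s_2^{-1}(D_2), (\widehat{X}\setminus\{0\})^2\bigr) = 2(n-1).
\]
Proposition~\ref{prop:ridenfungp} therefore yields
\[
\pi_k\bigl(\{A\in\mathsf{S}^d(W) : \rank_{\mathsf{S}}(A) = 2\}\bigr) \cong \pi_k\bigl(\widehat{X}\setminus\{0\}\bigr)^2
\]
for $2 \le k \le c-2 = 2(n-2)$, provided $n \ge 3$ so that $c \ge 4$. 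Combining this with Theorem~\ref{prop:vanishing homotopy group of symmetric rank one complex tensors}, which identifies $\widehat{X}\setminus\{0\}$ as the set of symmetric rank-one complex tensors and gives $\pi_2(\widehat{X}\setminus\{0\}) = \mathbb{Z}$ together with $\pi_k(\widehat{X}\setminus\{0\}) = 0$ for $3 \le k \le 2(n-1)$, the rank-two part of the theorem follows.

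For $r = 3$ (with $d > 3$, implicit as in the corresponding part of Theorem~\ref{thm: fundamental group complex symmetric1}), the analogous analysis gave $D_3 = s_2(X)$ and once more $c = 2(n-1)$. Applying Proposition~\ref{prop:ridenfungp} in the range $2 \le k \le 2(n-2)$ produces
\[
\pi_k\bigl(\{A\in\mathsf{S}^d(W) : \rank_{\mathsf{S}}(A) = 3\}\bigr) \cong \pi_k\bigl(\widehat{X}\setminus\{0\}\bigr)^3,
\]
which evaluates to $\mathbb{Z}^3$ at $k = 2$ and vanishes for $3 \le k \le 2(n-2)$ by the same Theorem~\ref{prop:vanishing homotopy group of symmetric rank one complex tensors}.

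I do not anticipate any substantive obstacle: all requisite geometric input — the identifications of the singular loci of $\sigma_2(X)$ and $\sigma_3(X)$ via \cite{Kanev99:jmsny} and \cite{Han14:arXiv}, identifiability via Proposition~\ref{prop:smoothiden}, and the explicit codimension bound $2(n-1)$ on the non-identifiable locus — was already assembled in the proof of Theorem~\ref{thm: fundamental group complex symmetric1}. The only routine bookkeeping is to verify that $s_r(X)\setminus D_r$ really coincides with the full set $\{A : \rank_{\mathsf{S}}(A) = r\}$ in each case, which is immediate from the explicit description of $D_r$, and that the range of $k$ is compatible with $c \ge 4$, i.e., $n \ge 3$ (needed anyway for the range $2 \le k \le 2(n-2)$ to be non-vacuous).
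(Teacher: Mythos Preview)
Your proposal is correct and follows essentially the same approach as the paper: the paper's proof likewise combines Proposition~\ref{prop:ridenfungp} with the homotopy computation for $\widehat{X}\setminus\{0\}$ obtained from the fiber bundle $\CC\setminus\{0\}\to\mathcal{O}_X^\circ(-1)\to X$ (i.e., Theorem~\ref{prop:vanishing homotopy group of symmetric rank one complex tensors}), reusing the singular-locus and codimension analysis from Theorem~\ref{thm: fundamental group complex symmetric1}. Your additional remark that $s_r(X)\setminus D_r$ coincides with the full rank-$r$ locus, and your flagging of the implicit hypothesis $d>3$ in the rank-three case, are useful clarifications that the paper leaves to the reader.
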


\subsection{Fundamental and higher homotopy groups of real symmetric rank-$r$ tensors}

We next move on to the real case. The next three theorems are the real analogues of Theorems~\ref{thm: fundamental group complex symmetric1}, \ref{prop:vanishing homotopy group of symmetric rank one complex tensors}, and \ref{prop:vanishing of higher homotopy group of complex symmetric tensors}.
\begin{theorem}[Fundamental groups of real symmetric tensor rank]\label{thm: fundamental group real symmetric1}
Let $V$ be a real vector space of dimension $n$.
\begin{enumerate}[\upshape (i)]
\item The set of symmetric rank-one real symmetric tensors has fundamental group 
\[
\pi_1 \bigl(\{ A \in \mathsf{S}^d(V) : \rank_{\mathsf{S}}(A) = 1 \}\bigr) =
\begin{cases}
\mathbb{Z} & \text{if}\; n = 2\;\text{and}\; d \; \text{is odd},\\
0 & \text{if}\;  n > 2\;\text{and}\; d \; \text{is odd},\\
\mathbb{Z} & \text{if}\; n = 2\;\text{and}\; d \; \text{is even},\\
\mathbb{Z}/2\mathbb{Z} & \text{if}\;  n > 2\;\text{and}\; d\; \text{is even}.
\end{cases}
\]

\item Let $n > 3$ and $d \ge 3$. Then the set of real symmetric rank-two tensors has  fundamental group 
\[
\pi_1 \bigl(\{A \in \mathsf{S}^d (V) : \rank_{\mathsf{S}} (A) = 2 \}\bigr) = 
\begin{cases}
\mathbb{Z}/2\mathbb{Z} & \text{if}\; d \; \text{is odd},\\
(\mathbb{Z}/2\mathbb{Z})^2 \rtimes \mathbb{Z}/2\mathbb{Z} & \text{if}\; d \; \text{is even}.
\end{cases}
\]

\item  Let $n > 3$ and $d \ge 3$. Then the set of real symmetric rank-three tensors has  fundamental group 
\[
\pi_1 \bigl(\{A \in \mathsf{S}^d (V) : \rank_{\mathsf{S}} (A) = 3 \}\bigr) =
\begin{cases}
\mathfrak{S}_3 & \text{if}\; d \; \text{is odd},\\
(\mathbb{Z}/2\mathbb{Z})^3 \rtimes \mathfrak{S}_3 & \text{if}\; d \;\text{is even}.
\end{cases}
\]
\end{enumerate}
\end{theorem}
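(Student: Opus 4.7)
The plan is to handle part (i) directly via a fiber bundle argument, and to derive parts (ii) and (iii) by combining part (i) with Proposition~\ref{prop:ridenfungp}, after identifying the non-identifiable locus $D_r$ in each case.

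For part (i), write $X(\RR) = \nu_d(\PP V) \cong \mathbb{RP}^{n-1}$ and observe that $p_2 \colon \mathcal{O}^\circ_{X(\RR)}(-1) \to \mathsf{S}^d(V)$ is a homeomorphism onto the set of real symmetric rank-one tensors, so it suffices to compute $\pi_1$ of the total space of the fiber bundle $\RR \setminus \{0\} \to \mathcal{O}^\circ_{X(\RR)}(-1) \to \mathbb{RP}^{n-1}$. The long exact sequence of Theorem~\ref{thm:homlongseq} reduces to
\[
0 \to \pi_1\bigl(\mathcal{O}^\circ_{X(\RR)}(-1)\bigr) \to \pi_1(\mathbb{RP}^{n-1}) \xrightarrow{\partial} \mathbb{Z}/2\mathbb{Z} \to \pi_0\bigl(\mathcal{O}^\circ_{X(\RR)}(-1)\bigr) \to 0,
\]
since $\pi_1(\RR\setminus\{0\})=0$ and $\pi_0(\RR\setminus\{0\})=\mathbb{Z}/2\mathbb{Z}$. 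Proposition~\ref{prop:sr1oddeven} identifies $\pi_0\bigl(\mathcal{O}^\circ_{X(\RR)}(-1)\bigr)$ as trivial when $d$ is odd and as $\mathbb{Z}/2\mathbb{Z}$ when $d$ is even. Plugging in $\pi_1(\mathbb{RP}^1)=\mathbb{Z}$ and $\pi_1(\mathbb{RP}^{n-1}) = \mathbb{Z}/2\mathbb{Z}$ for $n > 2$, a short case analysis of the connecting map $\partial$ (forced to be either surjective or zero by the stated value of $\pi_0$) yields the four asserted values.

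For parts (ii) and (iii), I would mirror the strategy of Theorem~\ref{thm: fundamental group complex symmetric1}. The singular locus of $\sigma_2(\nu_d(\PP W))$ equals $\nu_d(\PP W)$ by Kanev, and that of $\sigma_3(\nu_d(\PP W))$ equals $\sigma_2(\nu_d(\PP W))$ by Han (for $d$ sufficiently large); both statements descend to real points. Combined with Proposition~\ref{prop:smoothiden}, they identify $D_2 = \widehat{X}(\RR)\cap s_2(X(\RR))$ and $D_3 = s_2(X(\RR))$. The real codimension of $s_r^{-1}(D_r)$ in $\bigl(\widehat{X}(\RR)\setminus\{0\}\bigr)^{r}$ is then at least $n-1$, by the same tangent-space computation used in the complex case, since the real and complex codimensions of a complex-algebraic stratum coincide when we restrict to its real points. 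The hypothesis $n > 3$ therefore secures $\codim_\RR > 2$, so Proposition~\ref{prop:ridenfungp} applies and gives
\[
\pi_1 \cong \pi_1\bigl(\widehat{X}(\RR)\setminus\{0\}\bigr)^{r} \rtimes \mathfrak{S}_r.
\]
Substituting $\pi_1\bigl(\widehat{X}(\RR)\setminus\{0\}\bigr) = 0$ for $d$ odd and $\mathbb{Z}/2\mathbb{Z}$ for $d$ even (from part~(i), using $n > 2$), and using $\mathfrak{S}_2 = \mathbb{Z}/2\mathbb{Z}$ for (ii) and $\mathfrak{S}_3$ for (iii), reproduces all the stated fundamental groups.

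The main technical obstacle I anticipate arises in the even-$d$ cases of (ii) and (iii), where $\widehat{X}(\RR)\setminus\{0\}$ has two connected components, so that $\bigl(\widehat{X}(\RR)\setminus\{0\}\bigr)^{r}$ splits into $2^r$ components whose images under $s_r$ partition $s_r(X(\RR)) \setminus D_r$ into several pieces. Care is required to confirm that the \emph{relevant} connected component of the target yields a semidirect product with $\mathfrak{S}_r$ acting by the standard permutation of factors of $\pi_1 = (\mathbb{Z}/2\mathbb{Z})^r$, rather than a twisted or otherwise different extension. I expect to settle this by exhibiting explicit loops realizing each transposition of summands (as in the covering-space argument behind Proposition~\ref{prop:ridenfungp}) and verifying that the action on $\pi_1\bigl(\widehat{X}(\RR)\setminus\{0\}\bigr)^{r}$ is trivial on each $\mathbb{Z}/2\mathbb{Z}$ factor, so that the extension splits as stated.
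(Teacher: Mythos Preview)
Your proposal is correct and follows essentially the same approach as the paper: the fiber bundle $\RR\setminus\{0\}\to\mathcal{O}^\circ_{X(\RR)}(-1)\to X(\RR)$ together with its long exact sequence for part~(i), and then Kanev/Han for the singular loci, Proposition~\ref{prop:smoothiden} to identify $D_r$, the codimension bound $n-1>2$, and Proposition~\ref{prop:ridenfungp} for parts~(ii) and~(iii). The technical concern you raise about the even-$d$ case---where $(\widehat{X}(\RR)\setminus\{0\})^r$ is disconnected and the covering argument must be read component by component---is a genuine subtlety that the paper's own proof also leaves implicit.
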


\begin{proof}
\begin{enumerate}[\upshape (i)]
\item Let $\mathcal{O}^{\circ}_{X}(-1)$ be the bundle in \eqref{eq: nonzerotautological} with $S =  X = \nu_d(\PP V)$. As in the complex case, the projection $p_2 \colon \mathcal{O}_{X}^{\circ} (-1) \to \mathsf{S}^d (V)$ defines a homeomorphism between $\mathcal{O}_{X}^{\circ} (-1)$ and the set of symmetric rank-one real tensors. The fiber bundle
\[
\RR \setminus \{0\} \to \mathcal{O}_{X}^{\circ} (-1) \to X
\]
induces a long exact sequence 
\[
0 \to \pi_1(\mathcal{O}_{X}^{\circ} (-1)) \to \pi_1 (X) \to \pi_0 (\RR \setminus \{0\}) \to \pi_0 (\mathcal{O}_{X}^{\circ} (-1)) \to 0.
\]
Since $\pi_0 (\RR \setminus \{0\}) = \mathbb{Z}/2\mathbb{Z}$, 
\[
\pi_0 (\mathcal{O}_{X}^{\circ} (-1)) =
\begin{cases}
0 &\text{if}\; d \; \text{is odd},\\
\mathbb{Z}/2\mathbb{Z} &\text{if}\; d \; \text{is even},
\end{cases} \quad \text{and} \quad
\pi_1(X) =
\begin{cases}
\mathbb{Z} &\text{if}\; n = 2,\\
\mathbb{Z}/2\mathbb{Z} &\text{if}\; n > 2,
\end{cases}
\]
we obtain the required $\pi_1(\mathcal{O}_{X}^{\circ} (-1))$.

\item As in the complex case, $D_2$ as defined in \eqref{eq:nonunidecomppoints} equals $\widehat{X}$. It follows from \eqref{eq:codimRC} that
\[
\codim_{\RR} \big( s_2^{-1}(D_2), (\widehat{X} \setminus \{0\})^{2} \big) = \codim_{\RR} \big( s_2^{-1}(\widehat{X}), (\widehat{X} \setminus \{0\})^{2} \big) = (n - 1) > 2.
\]
By Proposition~\ref{prop:ridenfungp},
\[
\pi_1(s_2 (X) \setminus \widehat{X}) =
\pi_1 (\widehat{X} \setminus \{0\})^2 \rtimes \mathfrak{S}_2 =
\begin{cases}
\mathbb{Z}/2\mathbb{Z} & \text{if}\; d \; \text{is odd},\\
(\mathbb{Z}/2\mathbb{Z})^2 \rtimes \mathbb{Z}/2\mathbb{Z} & \text{if}\; d \; \text{is even}.
\end{cases}
\]

\item As in the complex case, $D_3$ as defined in \eqref{eq:nonunidecomppoints} equals $s_2 (X)$. Since
\[
\codim_{\RR} \big( s_3^{-1}(D_3), (\widehat{X} \setminus \{0\})^{3} \big) = \codim_{\RR} \big( s_3^{-1}(s_2(X)), (\widehat{X} \setminus \{0\})^{3} \big) = (n - 1) > 2,
\]
it follows from Proposition~\ref{prop:ridenfungp} that
\[
\pi_1 (s_3(X) \setminus s_2(X)) = \pi_1(\mathcal{O}_{X}^{\circ} (-1))^3 \rtimes \mathfrak{S}_3 = \begin{cases}
\mathfrak{S}_3 &\text{if}\; d \; \text{is odd},\\
(\mathbb{Z}/2\mathbb{Z})^3 \rtimes \mathfrak{S}_3 & \text{if}\; d\; \text{is even}.
\end{cases}
\]\par \vspace{-1.4\baselineskip}
\qedhere
\end{enumerate}
\end{proof}
Again, from \eqref{eqn:homotopty group of real projective spaces} and the long exact sequence induced by the fiber bundle $\RR \setminus \{0\} \to \mathcal{O}_{X}^{\circ} (-1) \to X$, we deduce the higher-homotopy groups in the real case.
\begin{theorem}[Higher homotopy groups of real symmetric rank-one tensors]\label{prop:homotopy group of symmetric rank one real tensors}
Let $d \ge 3$ and $V$ be a real vector space.
Let $k\ge 2$. Then
\[
\pi_k \bigl(\{ A \in \mathsf{S}^d(V) : \rank_{\mathsf{S}}(A) = 1 \}\bigr)\cong \pi_k(\mathbb{S}^{n-1}).
\]
In particular, if $n\ge 4$ and $2 \le k \le n-2$, then 
\[
\pi_k \bigl(\{ A \in \mathsf{S}^d(V) : \rank_{\mathsf{S}}(A) = 1 \}\bigr) = 0.
\]
\end{theorem}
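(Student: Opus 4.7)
The plan is to mimic the argument used for Theorem~\ref{prop:vanishing homotopy group of rank one real tensors} in the unsymmetric case, with the Segre variety replaced by the Veronese variety. Set $X = \nu_d(\PP V)$ and let $\mathcal{O}_X^{\circ}(-1)$ be the bundle in \eqref{eq: nonzerotautological} with $S = X$. As already observed in the proof of Theorem~\ref{thm: fundamental group real symmetric1}, the projection $p_2 \colon \mathcal{O}_X^{\circ}(-1) \to \mathsf{S}^d(V)$ is a homeomorphism onto the set of symmetric rank-one real tensors, so it suffices to compute $\pi_k(\mathcal{O}_X^{\circ}(-1))$.

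First, I would invoke the fiber bundle
\[
\RR \setminus \{0\} \to \mathcal{O}_X^{\circ}(-1) \to X
\]
and apply the long exact sequence of Theorem~\ref{thm:homlongseq}. Since $\RR \setminus \{0\}$ is homotopy equivalent to two points, $\pi_k(\RR \setminus \{0\}) = 0$ for every $k \ge 1$. The long exact sequence then collapses to an isomorphism $\pi_k(\mathcal{O}_X^{\circ}(-1)) \cong \pi_k(X)$ for every $k \ge 2$.

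Next, the Veronese map $\nu_d \colon \PP V \to X$ is a diffeomorphism onto its image, so $\pi_k(X) \cong \pi_k(\PP V) = \pi_k(\mathbb{RP}^{n-1})$. Appealing to \eqref{eqn:homotopty group of real projective spaces}, which was established in the proof of Theorem~\ref{prop:vanishing homotopy group of rank one real tensors} using the double cover $\mathbb{S}^{n-1} \to \mathbb{RP}^{n-1}$, we have $\pi_k(\mathbb{RP}^{n-1}) \cong \pi_k(\mathbb{S}^{n-1})$ for every $k \ge 2$. Combining these gives the required isomorphism
\[
\pi_k\bigl(\{ A \in \mathsf{S}^d(V) : \srank(A) = 1\}\bigr) \cong \pi_k(\mathbb{S}^{n-1}).
\]
The ``in particular'' clause is then immediate from the classical vanishing $\pi_k(\mathbb{S}^{n-1}) = 0$ for $2 \le k \le n-2$.

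There is essentially no technical obstacle here: every ingredient (the fiber bundle, Theorem~\ref{thm:homlongseq}, the homotopy groups of real projective spaces) has already been set up. The only small subtlety worth noting is the edge case $n = 2$, where $\mathbb{RP}^{1} \simeq \mathbb{S}^1$ has vanishing higher homotopy groups and the isomorphism still holds consistently with the convention $\pi_k(\mathbb{S}^1) = 0$ for $k \ge 2$; the vanishing range $2 \le k \le n - 2$ is simply empty in that case, which is why the hypothesis $n \ge 4$ is imposed only for the final statement.
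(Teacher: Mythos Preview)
Your proposal is correct and follows essentially the same approach as the paper, which simply remarks that the result follows from the long exact sequence of the fiber bundle $\RR \setminus \{0\} \to \mathcal{O}_X^{\circ}(-1) \to X$ together with \eqref{eqn:homotopty group of real projective spaces}. You have correctly filled in the details, including the identification of $X = \nu_d(\PP V)$ with $\mathbb{RP}^{n-1}$ via the Veronese map.
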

\begin{theorem}[Higher homotopy groups of real symmetric rank-two and three tensors]\label{prop:vanishing homotopy group of symmetric rank three identifiable real tensors}
Let $d \ge 3$ and $V$ be a real vector space.
If $2 \le k \le n-3$, then
\[
\pi_k \bigl(\{A \in \mathsf{S}^d (V) : \rank_{\mathsf{S}} (A) = 2 \}\bigr)  \cong \pi_k \bigl(\{A \in \mathsf{S}^d (V) : \rank_{\mathsf{S}} (A) = 3 \}\bigr)  = 0.
\]
\end{theorem}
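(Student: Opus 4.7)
The plan is to deduce this statement as a direct application of Proposition \ref{prop:ridenfungp} to the Veronese variety $X = \nu_d(\PP V)$, reusing the codimension estimates and the identification of the exceptional locus $D_r$ already established inside the proof of Theorem \ref{thm: fundamental group real symmetric1}, and then combining with the higher homotopy computation of the symmetric rank-one locus from Theorem \ref{prop:homotopy group of symmetric rank one real tensors}. No genuinely new geometry is needed — the $\pi_1$ proof already set up every ingredient.

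For the rank-two case, recall from the proof of Theorem \ref{thm: fundamental group real symmetric1}(ii) that $D_2 = \widehat{X}$ (since by \cite{Kanev99:jmsny} the singular locus of $\sigma_2(X)$ is $X$, so Proposition \ref{prop:smoothiden} identifies the identifiable locus with $s_2(X) \setminus \widehat{X}$) and
\[
c \coloneqq \codim_{\RR}\bigl(s_2^{-1}(D_2),\, (\widehat{X}\setminus\{0\})^2\bigr) = n-1.
\]
The range $2 \le k \le n-3$ in the theorem forces $n \ge 5$, so $c \ge 4$. Proposition \ref{prop:ridenfungp} then gives
\[
\pi_k\bigl(s_2(X) \setminus D_2\bigr) \cong \pi_k\bigl(\widehat{X} \setminus \{0\}\bigr)^2 \quad\text{for}\quad 2 \le k \le c-2 = n-3.
\]
Since $s_2(X) \setminus D_2 = \{A \in \mathsf{S}^d(V) : \rank_{\mathsf{S}}(A) = 2\}$ and $\widehat{X} \setminus \{0\} = \{A \in \mathsf{S}^d(V) : \rank_{\mathsf{S}}(A) = 1\}$, Theorem \ref{prop:homotopy group of symmetric rank one real tensors} identifies $\pi_k(\widehat{X}\setminus\{0\})$ with $\pi_k(\mathbb{S}^{n-1})$, which vanishes throughout $1 \le k \le n-2$ and in particular on our range.

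The rank-three case is handled in the same way. From the proof of Theorem \ref{thm: fundamental group real symmetric1}(iii), one has $D_3 = s_2(X)$ (using that the singular locus of $\sigma_3(X)$ is $\sigma_2(X)$ by \cite{Han14:arXiv} and, together with Proposition \ref{prop:smoothiden}, that no rank-three point lies in $\widehat{\sigma}_2(X)$), and
\[
\codim_{\RR}\bigl(s_3^{-1}(D_3),\, (\widehat{X}\setminus\{0\})^3\bigr) = n-1.
\]
Proposition \ref{prop:ridenfungp} applied with $r = 3$ then yields $\pi_k(s_3(X) \setminus D_3) \cong \pi_k(\widehat{X}\setminus\{0\})^3$ for $2 \le k \le n-3$, and the same vanishing $\pi_k(\mathbb{S}^{n-1}) = 0$ finishes the proof.

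I do not foresee a substantial obstacle: essentially all the hard work — identifying $D_r$, verifying $r$-identifiability of the Veronese for $r = 2, 3$, and computing the relevant real codimensions — was already done for the fundamental-group computations in Theorem \ref{thm: fundamental group real symmetric1}. The only minor bookkeeping is checking that the removed locus $D_r$ captures exactly the points whose removal is needed both for identifiability and for the codimension hypothesis of Proposition \ref{prop:ridenfungp}, and that $s_r(X) \setminus D_r$ equals the symmetric rank-exactly-$r$ locus stated in the theorem, which is immediate from the descriptions of $D_2$ and $D_3$ above.
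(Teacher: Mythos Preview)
Your proposal is correct and follows essentially the same approach as the paper: the paper's own proof is the one-line remark preceding Theorems~\ref{prop:homotopy group of symmetric rank one real tensors} and~\ref{prop:vanishing homotopy group of symmetric rank three identifiable real tensors}, which invokes Proposition~\ref{prop:ridenfungp} together with the codimension computations and identification of $D_r$ already carried out in Theorem~\ref{thm: fundamental group real symmetric1}, and then appeals to the rank-one homotopy via the fiber bundle $\RR\setminus\{0\}\to\mathcal{O}^\circ_X(-1)\to X$. You have unpacked exactly this argument.
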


\section{Topology of multilinear rank}\label{sec:connmultilnrk}

We will address the path-connectedness and calculate the homotopy groups of the set of tensors of a fixed multilinear rank. We start by recalling the notion.

\begin{definition}
Let $V_1,\dots, V_d$ be vector  spaces over $\FF = \RR$ or $\CC$ of dimensions $n_1,\dots,n_d$ respectively. Let $r_i \le n_i$ be a positive integer $i=1,\dots,d$.  The \emph{subspace variety} is the set
\begin{multline*}
\Sub_{r_1,\dots, r_d}(V_1,\dots, V_d) \\
\coloneqq \{A \in V_1\otimes \dots \otimes V_d :  A \in U_1\otimes \dots \otimes U_d, \; U_i\subseteq V_i,\; \dim(U_i)=r_i,\; i=1, \dots, d \}.
\end{multline*}
We say that $A\in  V_1\otimes \dots \otimes V_d$ has \emph{multilinear rank} $(r_1,\dots, r_d)$, or, in notation,
\[
\mrank(A) = (r_1,\dots,r_d),
\]
if whenever $A \in \Sub_{s_1,\dots, s_d}(V_1,\dots, V_d)$ for $s_i\le r_i$, $i=1,\dots, d$, we must have $r_i=s_i$ for all $i=1,\dots,d$. In other words $\Sub_{r_1,\dots, r_d}(V_1,\dots, V_d)$ is the smallest subspace variety that contains $A$.
\end{definition}
Clearly, the definition implies that
\[
\Sub_{r_1,\dots, r_d}(V_1,\dots, V_d)  =  \{A \in V_1\otimes \dots \otimes V_d : \mrank(A) \le (r_1,\dots,r_d) \}.
\]
The subspace variety is very well studied \cite{L} but in this article we are interested in the set of all tensors of multilinear rank \emph{exactly} $(r_1,\dots, r_d)$, which we will denote by
\begin{equation}\label{eq:Xr}
X_{r_1,\dots, r_d}(V_1,\dots, V_d)  \coloneqq  \{A \in V_1\otimes \dots \otimes V_d : \mrank(A) = (r_1,\dots,r_d)\}.
\end{equation}

Every $d$-tensor may be regarded as a $2$-tensor  via \emph{flattening} \cite{L, HLA}. The flattening map
\begin{equation}\label{eq:flat}
\flat_i : V_1 \otimes \dots \otimes V_d \to V_i \otimes \Bigl(\bigotimes\nolimits_{j \ne i} V_j\Bigr), \quad i=1,\dots, d,
\end{equation}
takes a $d$-tensor and sends it to a $2$-tensor by `forgetting' the tensor product structure in $\bigotimes\nolimits_{j \ne i} V_j$. One may also characterize multilinear rank as
\[
\mrank(A) = \bigl(\rank(\flat_1(A)),\dots,\rank(\flat_d(A))\bigr),
\]
where $\rank$ here denotes usual matrix rank, which, being coordinate independent, is defined on $V_i \otimes \bigl(\bigotimes\nolimits_{j \ne i} V_j\bigr)$.

Note that if $(r_1,\dots,r_d)$ is the multilinear rank of some tensor, then we must have
\begin{equation}\label{eq:necc}
r_i \le \prod\nolimits_{j \ne i} r_j,\quad i =1,\dots,d,
\end{equation}
as it follows from \eqref{eq:flat} that $\rank(\flat_i (A)) \le \min\bigl\{\dim_\FF ( U_i), \dim_\FF \bigl(\bigotimes_{j \ne i} U_j\bigr)\bigr\}$.

\subsection{Path-connectedness of multilinear rank}\label{sec:connmrank}

While the subspace variety, being irreducible, is connected (in fact, contractible since it is a union of infinitely many linear subspaces of the ambient tensor space), it is less clear for the set of tensors of a fixed multilinear rank. For example, over $\FF = \RR$, when $d = 2$ and $r_1 = r_2 = n_1 = n_2  = n$, $X_{n,n}(V_1, V_2)$ is the set of $n \times n$ invertible real matrices, which is disconnected.

As one can surmise from the case $d=2$, the situation over $\RR$ is more subtle and we will start with this first, leaving the complex case to the end.

For a finite-dimensional real vector space $V$, we write $\Gr(r, V)$ for the \emph{Grassmannian} of $r$-dimensional linear subspaces of $V$ and $\mathcal{T}_{\Gr(r, V)}$ for its \emph{tautological vector bundle}, i.e., whose fiber over $U \in \Gr(r, V)$ is $U$.

Let $V_1,\dots, V_d$ be vector spaces of dimensions $n_1,\dots,n_d$ respectively and $r_1,\dots, r_d$ be positive integers such that $r_i\le n_i$, $i =1, \dots, d$. We write
\[
G_{r_1, \dots, r_d}= \Gr(r_1, V_1) \times \dots \times \Gr(r_d, V_d)
\]
and $q_j: G_{r_1, \dots, r_d}\to \Gr(r_j, V_j)$ for the  $j$th  projection. We write
\[
\mathcal{T}_{r_1, \dots, r_d} = q_1^*(\mathcal{T}_{\Gr(r_1, V_1)}) \otimes \dots \otimes q_d^*(\mathcal{T}_{\Gr(r_d, V_d)})
\]
for the tensor product of the pullbacks of the tautological vector bundles, i.e., whose fiber over $(U_1, \dots, U_d) \in \Gr(r_1, V_1) \times \dots \times \Gr(r_d, V_d)$ is $U_1 \otimes \dots \otimes U_d$.

Let $p : \mathcal{T}_{r_1, \dots, r_d}  \to G_{r_1, \dots, r_d} $ be the projection of the vector bundle $\mathcal{T}_{r_1, \dots, r_d}$ onto its base space $G_{r_1, \dots, r_d}$. We define the map
\[
\rho_{r_1, \dots, r_d} \colon \mathcal{T}_{r_1, \dots, r_d} \to V_1 \otimes \dots \otimes V_d,\quad
 (U_1, \dots, U_d, A) \mapsto A,
\]
where $(U_1,\dots,U_d) \in G_{r_1, \dots, r_d}$ and $A \in U_1 \otimes \dots \otimes U_d$. The image of $\rho_{r_1, \dots, r_d}$ is $\Sub_{r_1, \dots, r_d}(V_1,\dots, V_d)$ and $\rho_{r_1, \dots, r_d}$ gives a \emph{Kempf--Weyman desingularization} \cite{Weyman, L} of $\Sub_{r_1, \dots, r_d}(V_1,\dots, V_d)$.

\begin{theorem}[Path-connectedness of multilinear rank over $\RR$]\label{thm:multilinrkrealconn2} 
Let $V_1,\dots, V_d$ be real vector spaces.
\begin{enumerate}[\upshape (i)]
\item The set  of multilinear rank-$(r_1,\dots, r_d)$ real tensors
\[
\{ A \in V_1\otimes\dots\otimes V_d : \mrank(A) = (r_1,\dots,r_d) \}
\]
is path-connected if
\[
r_i < \prod_{j \ne i} r_j\quad \text{for all}\; i = 1,\dots, d,
\]
or if
\[
r_i = \prod_{j \ne i} r_j < n_i\quad \text{for some}\; i = 1, \dots, d.
\]
\item The set  of multilinear rank-$(r_1,\dots, r_d)$ real tensors
\[
\{ A \in V_1\otimes\dots\otimes V_d : \mrank(A) = (r_1,\dots,r_d) \}
\]
has two connected components if
\[
r_i = \prod_{j \ne i} r_j = n_i\quad \text{for some}\; i = 1, \dots, d.
\]
\end{enumerate}
\end{theorem}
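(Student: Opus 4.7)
The plan is to realize $X_{r_1,\dots,r_d}(V_1,\dots,V_d)$ as the image of the Kempf--Weyman resolution $\rho_{r_1,\dots,r_d}$ restricted to the open sub-fiber-bundle
\[
E \coloneqq \{(U_1,\dots,U_d, A) \in \mathcal{T}_{r_1,\dots,r_d} : \mrank(A) = (r_1,\dots, r_d)\}.
\]
Since a tensor of exact multilinear rank $(r_1,\dots,r_d)$ determines its subspaces uniquely by $U_i = \im \flat_i(A)$, the restriction $\rho|_E$ is a continuous bijection with continuous inverse and hence a homeomorphism, so $\pi_0(X_{r_1,\dots,r_d}(V_1,\dots,V_d)) = \pi_0(E)$. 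Composing with the bundle projection $p$ then exhibits $E$ as a locally trivial fiber bundle over the connected space $G_{r_1,\dots,r_d}$ with typical fiber
\[
F \coloneqq \{A \in \RR^{r_1}\otimes\dots\otimes\RR^{r_d} : \rank(\flat_i(A)) = r_i \ \text{for each}\ i\},
\]
and the tail of the associated long exact sequence of homotopy groups
\[
\pi_1(G_{r_1,\dots,r_d}) \xrightarrow{\ \partial\ } \pi_0(F) \to \pi_0(E) \to 0
\]
reduces the problem to computing $\pi_0(F)$ together with the image of the monodromy map $\partial$.

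The locus $\{A : \rank(\flat_i(A)) < r_i\}$ is a real determinantal subvariety of $\RR^{r_1\cdots r_d}$ of codimension $\prod_{j\ne i} r_j - r_i + 1$. When $r_i < \prod_{j\ne i} r_j$ for every $i$, these codimensions are all at least two, so removing their union leaves $F$ path-connected, yielding the first clause of (i) immediately. When $r_i = \prod_{j\ne i} r_j$ for some $i$, the $i$-th flattening becomes a square matrix and its rank-deficient locus is the hypersurface $\{\det\flat_i = 0\}$; the invariant $\operatorname{sign}\det\flat_i$ (computed in a local trivialization) therefore partitions $F$ into two open sheets, each rendered path-connected by applying the same codimension-two argument to the remaining rectangular flattenings.

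For the monodromy, any loop in $\Gr(r_i, V_i)$ reversing the orientation of the tautological bundle, which exists whenever $n_i > r_i$, induces a change of frame on $U_i$ of determinant $-1$ and flips $\operatorname{sign}\det\flat_i$; more generally a loop in $\Gr(r_j, V_j)$ for $j\ne i$ (with $n_j > r_j$) alters this invariant by $(-1)^{\prod_{k\ne i, k\ne j} r_k}$, using the identity $\det(A_1\otimes\dots\otimes A_m) = \prod_l \det(A_l)^{\prod_{m\ne l}\dim W_m}$ to compute the induced action on $\bigotimes_{k\ne i} U_k$. In the second clause of (i), the distinguished index with $r_i = \prod_{j\ne i} r_j < n_i$ therefore supplies a sign-flipping loop, identifying the two sheets of $F$ and giving $\pi_0(E) = 0$. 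In case (ii), the equation $n_i = r_i$ collapses $\Gr(r_i, V_i)$ to a point and eliminates the primary source of monodromy, so the two sheets of $F$ persist in $E$ and $\pi_0(E) = \mathbb{Z}/2\mathbb{Z}$.

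The main obstacle is the parity bookkeeping required in case (ii): with the factor $\Gr(r_i, V_i)$ contracted to a point, one must still rule out a compensating sign flip from an orientation-reversing loop in some other $\Gr(r_j, V_j)$ with $n_j > r_j$. The sign invariant survives exactly when $\prod_{k\ne i,k\ne j} r_k$ is even for every such $j$, and the hypotheses of the theorem together with the necessary condition $r_j \le \prod_{k\ne j} r_k$ are configured so that this parity obstruction is met precisely when $r_i = \prod_{j\ne i} r_j = n_i$ and no clause of (i) applies; verifying this compatibility cleanly is where the technical work of the proof will concentrate.
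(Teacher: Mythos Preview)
Your overall strategy---realizing $E$ as a fiber bundle over $G_{r_1,\dots,r_d}$ and reading off $\pi_0(E)$ from the tail of the homotopy long exact sequence---is sound and genuinely different from the paper's argument. The paper also passes through the homeomorphism $E\cong X_{r_1,\dots,r_d}$, but for the second clause of (i) it does not invoke monodromy: instead it first slides one point to the fiber of the other along a path in the base, and then, within a single fiber, enlarges $U_1$ by one extra direction of $V_1$ via an explicit immersion $\Phi:\RR^{r_1}\to\Gr(r_1,V_1)$, so that the first flattening becomes an $(r_1{+}1)\times r_1$ matrix and the codimension-two removal lemma (Theorem~\ref{thm:removing a semialgebraicsubset}) applies again. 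Your route is more conceptual and uniform; the paper's is more hands-on and never needs to name the monodromy action.

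The gap is in your last paragraph. Your parity criterion is correct---an orientation-reversing loop in $\Gr(r_j,V_j)$ (which exists whenever $r_j<n_j$) flips $\operatorname{sign}\det\flat_i$ exactly when $\prod_{k\ne i,j}r_k$ is odd---but your expectation that the hypotheses force this product to be even for every such $j$ is false. Take $d=4$, $(r_1,r_2,r_3,r_4)=(27,3,3,3)$, $(n_1,n_2,n_3,n_4)=(27,4,4,4)$. Then $r_1=\prod_{j\ne 1}r_j=n_1$, so (ii) applies, while neither clause of (i) does (for $j\ge 2$ one has $\prod_{k\ne j}r_k=243\ne r_j$). Yet $\prod_{k\ne 1,2}r_k=9$ is odd and $r_2<n_2$, so your own monodromy computation yields $\pi_0(E)=0$, contradicting the two components asserted in (ii). This is visible directly: the path
\[
A(t)=\sum_{i,j,k}e_{(i,j,k)}\otimes f_i(t)\otimes g_j\otimes h_k,\qquad f_1(t)=\cos(\pi t)\,f_1+\sin(\pi t)\,f_4,\quad f_2(t)=f_2,\ f_3(t)=f_3,
\]
stays inside $X_{27,3,3,3}$ for all $t$ and connects tensors whose first flattenings have opposite determinant sign in any fixed frame.

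So your approach does not merely reprove the theorem---it sharpens it. Case (ii) yields two components only under the further restriction that no orientation-reversing loop with odd exponent is available, for instance when $r_j=n_j$ for every $j$; this is also precisely the situation in which the paper's invariant $\det\flat_1(A)$ is literally the determinant of a square matrix on all of $X_{r_1,\dots,r_d}$. You should expect your parity bookkeeping to produce a refinement of the stated dichotomy rather than a verification of it as written.
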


\begin{proof}
For brevity, we will write $X_{r_1, \dots, r_d} = X_{r_1, \dots, r_d}(V_1,\dots, V_d)$ for the set of multilinear rank-$(r_1,\dots, r_d)$  tensors in this proof.
Let $C \in V_1 \otimes \dots \otimes V_d$ and  $\flat_i(C) \in \bigl(\bigotimes\nolimits_{j \ne i} V_j\bigr)$ be the $i$th flattening of $C$ as defined in \eqref{eq:flat}. Let
\[
\mathcal{X}_{r_1, \dots, r_d} \coloneqq \{(U_1, \dots, U_d, C) \in \mathcal{T}_{r_1, \dots, r_d} \colon \rank(\flat_i(C)) = r_i \; \text{for}\; i = 1, \dots, d\}.
\]
Then $\rho_{r_1, \dots, r_d} \colon \mathcal{X}_{r_1, \dots, r_d} \to X_{r_1, \dots, r_d}$ is an isomorphism. For each $i = 1, \dots, d$, let
\[
\mathcal{S}_i\coloneqq \{(U_1, \dots, U_d, C) \in \mathcal{T}_{r_1, \dots, r_d} \colon \rank(\flat_i(C)) \le r_i-1\}.
\]
Then
\[
\mathcal{X}_{r_1, \dots, r_d} = \mathcal{T}_{r_1, \dots, r_d} \setminus \bigcup\nolimits_{i=1}^d \mathcal{S}_i.
\]
We observe that
\[
\dim_{\RR}( \mathcal{T}_{r_1, \dots, r_d} )= \sum\nolimits_{i=1}^d r_i(n_i-r_i) + \prod\nolimits_{i=1}^d r_i
\]
and
\begin{align}
\dim_{\RR} (\mathcal{S}_i) &= \sum\nolimits_{i=1}^d r_i(n_i-r_i) + (r_i-1) + (r_i-1)\prod\nolimits_{j \ne i} r_j \nonumber \\
&= \dim_{\RR} (\mathcal{T}_{r_1, \dots, r_d} )- \Bigl(\prod\nolimits_{j \ne i} r_j - r_i + 1\Bigr).\label{eq:codimsmlmulrk}
\end{align}
If $r_i < \prod_{j\ne i} r_j$, then \eqref{eq:codimsmlmulrk} implies that $\mathcal{S}_i$ has real codimension at least two in $\mathcal{T}_{r_1, \dots, r_d}$. By Theorem~\ref{thm:removing a semialgebraicsubset}, we see that $X_{r_1, \dots, r_d}$ is path-connected.

We next consider the case when $r_i = \prod_{j\ne i} r_j < n_i$ for some $i = 1,\dots, d$. Without loss of generality, we may assume that
\[
 r_1 = \prod\nolimits_{i = 2}^d r_i < n_1.
\]
We want to prove that any two points $(U_1, \dots, U_d, A)$ and $(U'_1, \dots, U'_d, B)$ in $X_{r_1, \dots, r_d}$ can be connected by a curve contained in $X_{r_1, \dots, r_d}$. We will first prove that since the base space $\Gr(r_1,V_1)\times \dots \times \Gr(r_d,V_d)$ of the bundle $\mathcal{T}_{r_1, \dots, r_d}$ is connected, there is a curve in $X_{r_1, \dots, r_d}$ connecting  $(U'_1, \dots, U'_d, B)$ and $(U_1, \dots, U_d, A')$ for some $A' \in U'_1 \otimes \cdots \otimes U'_d$. We will then prove that  $(U_1, \dots, U_d, A')$ and $(U_1, \dots, U_d, A)$ can be connected by a curve contained in $X_{r_1, \dots, r_d}$.

For each $i=1,\dots, d$, let $\gamma_i:[0,1] \to \Gr(r_i,V_i)$ be a curve connecting $U'_i = \gamma_i(0)\in \Gr(r_i,V_i)$ and $U_i =\gamma_i(1) \in \Gr(r_i,V_i)$. Since $B \in U'_1\otimes \dots \otimes U'_d$,  we may write 
\[
B = \sum\nolimits_{i_1,\dots,i_d=1}^{r_1,\dots, r_d} \lambda_{i_1\cdots i_d} u_{1,i_1} \otimes \dots \otimes u_{d,i_d},
\]
where $u_{i,1},\dots, u_{i, r_i}$ form a basis of $U'_i$, $i = 1,\dots, d$. Consider the curve $B(\cdot):[0,1] \to X_{r_1,\dots, r_d}$ defined by
\[
B(t)  = \sum\nolimits_{i_1,\dots,i_d=1}^{r_1,\dots, r_d} \lambda_{i_1\cdots i_d} u_{1,i_1}(t) \otimes \dots \otimes u_{d,i_d}(t),
\]
where  $u_{i,1}(t),\dots, u_{i, r_i}(t)$ form a basis of $\gamma_i(t)$ for any $t\in [0,1]$, with
\[
u_{i,1}(0) = u_{i,1},  \dots, u_{i,r_i}(0) = u_{i, r_i}.
\]
The curve $B(t)$ connects the point $B = B(0)$  with some $B(1)\in U'_1\otimes \dots \otimes U'_d$. Moreover, $(\gamma_1(t),\dots,\gamma_d(t), B(t))$ defines a curve in $X_{r_1,\dots, r_d}$ connecting $(U'_1,\dots,U'_d, B)$ and $(U_1,\dots,U_d, B(1))$. If $(U_1, \dots, U_d, B(1))$ and $(U_1,\dots,U_d,A)$ can also be connected by a curve in $X_{r_1,\dots, r_d}$, then so can $(U_1,\dots,U_d,A)$ and $(U'_1,\dots,U'_d,B)$.

It remains to show that any two points $(U_1, \dots, U_d, A)$ and $(U_1, \dots, U_d, B)$ in $X_{r_1, \dots, r_d}$ can be connected by a curve contained in $X_{r_1, \dots, r_d}$. Extend the basis  $u_{1,1},\dots,u_{1,r_1}$ of the subspace $U_1$ chosen earlier to a basis $u_{1,1},\dots,u_{1, n_1}$  of $V_1$. With respect to this basis, the first flattening of $A$ and $B$ have representation as matrices
\[
\flat_1(A) = 
\begin{bmatrix}
I & 0 \\
0 & 0
\end{bmatrix} \in \mathbb{R}^{n_1} \times \mathbb{R}^{\prod_{i=2}^d n_i}, \quad \flat_1(B) =
\begin{bmatrix}
M & 0 \\
0 & 0
\end{bmatrix}
\in \mathbb{R}^{n_1} \times \mathbb{R}^{\prod_{i=2}^d n_i},
\]
where $I \in \mathbb{R}^{r_1\times r_1}$ is the  identity matrix and for some $M \in \mathbb{R}^{r_1\times r_1}$.

We consider the map $\Phi: \mathbb{R}^{r_1} \to \Gr(r_1,V_1)$ defined by 
\[
\Phi(t_1,\dots, t_{r_1}) = \operatorname{span}\{u_{1,1} + t_1 u_{1,r_1+1},\dots, u_{1, r_1} + t_{r_1} u_{1,r_1+1}\},
\]
which is well-defined as $u_{1,1},\dots, u_{1, r_1}$ are linearly independent. The image $\Phi(\mathbb{R}^{r_1}) \subseteq \Gr(r_1,V_1)$ is a smooth submanifold --- to see this, we determine the rank of the differential
\[
d\Phi_{(t_1,\dots, t_{r_1})}: \mathsf{T}_{(t_1,\dots,t_{r_1})}\mathbb{R}^{r_1} \to \mathsf{T}_{\Phi(t_1,\dots, t_{r_1})}\Gr(r_1,V_1).
\]
Since every point $U \in \Gr(r_1,V_1)$ may be written as $[u_1\wedge \dots \wedge u_{r_1}]\in \mathbb{P}\mathbb{R}^{\binom{n_1}{r_1}}$ by the Pl\"ucker embedding, where $u_1, \dots, u_{r_1}$ form a basis of $U$, we obtain 
\[
d\Phi_{(t_1,\dots, t_{r_1})} (s_1,\dots, s_{r_1}) = \bigl( [u^1_{r_1+1} \wedge u^1_2 \wedge \dots \wedge u^1_{r_1}], \dots, [u^1_1 \wedge \dots \wedge u^1_{r_1-1} \wedge u^1_{r_1+1}] \bigr)^\tp,
\]
which has full rank $r_1$ for all $(t_1,\dots, t_{r_1})\in \mathbb{R}^{r_1}$.

Recall the notations in the two paragraphs preceding Theorem~\ref{thm:multilinrkrealconn2}. Let $(U_1,\dots, U_d) \in G_{r_1,\dots, r_d}$ and consider the preimage
\[
\mathcal{U} \coloneqq p^{-1} (\Phi(\mathbb{R}^{r_1}) \times \{U_1\} \times \dots \times \{U_d\}) \subseteq \mathcal{T}_{r_1,\dots, r_d}.
\]
Since $\Phi(\mathbb{R}^{r_1})$ is a smooth submanifold of $\Gr(r_1,V_1)$ and $p$ is the projection map, $\mathcal{U}$ is a smooth submanifold of $\mathcal{T}_{r_1,\dots, r_d}$. By its definition $\mathcal{U}$ contains both $(U_1,\dots, U_d,A)$ and $(U_1,\dots, U_d,B)$.  Let $(U_1, \dots, U_d, C) \in \mathcal{U}$.  Then its first flattening takes the form
\[
\flat_1(C)=\begin{bmatrix}
L & 0 \\
0 & 0
\end{bmatrix} \in \mathbb{R}^{n_1\times \prod_{i=2}^d n_i},
\]
for some $L\in \RR^{(r_1+1) \times r_1}$.  Set
\[
\mathcal{R}_i \coloneqq \{(U_1, \dots, U_d, C) \in \mathcal{U} \colon \rank(\flat_i(C)) \le r_i -1\}, \quad i = 1, \dots, d.
\]
We will show that $\mathcal{U} \setminus \bigcup_{i=1}^d \mathcal{R}_i$ is path-connected by comparing dimensions. Clearly,
\[
\dim_\RR (\mathcal{U}) = r_1 + \prod\nolimits_{i=1}^d r_i
\]
since $\Phi(\mathbb{R}^{r_1})$ has dimension $r_1$ and the fiber of $p$ has dimension $\prod_{i=1}^d r_i$. The codimension of $\mathcal{R}_1$ in $\mathcal{U}$ is at least two:  $\mathcal{R}_1$ is the intersection of $\mathcal{U}$ with the set $\mathcal{V} = \{(U_1,\dots, U_d,C) \in \mathcal{T}_{r_1,\dots, r_d} : \rank (\flat_1(C)) \le r_1 - 1\}$; as all $r_1 \times r_1$ minors of $\flat_1(C) = \begin{bsmallmatrix}
L & 0 \\
0 & 0
\end{bsmallmatrix}$ vanishes and $L$ is an $(r_1 + 1) \times r_1$ matrix,  $\mathcal{R}_1 = \mathcal{U} \cap \mathcal{V}$ must be of at least codimension two in $\mathcal{U}$. The same is true for $i=2,\dots, d$, where
\[
\dim_\RR (\mathcal{R}_i) \le r_1 +  (r_i-1)\prod\nolimits_{j\ne i}^d r_j + (r_i-1) = r_ 1 + \prod\nolimits_{i=1}^d r_i - \Bigl(\prod\nolimits_{j\ne i}^d r_j - r_i +1\Bigr);
\]
by assumption, $\prod_{j\ne i}^d r_j > r_i$ for $i =2,\dots, d$, and so we have $\dim_\RR ( \mathcal{R}_i)  \le r_ 1 + \prod_{i=1}^d r_i - 2$. Hence  $\mathcal{U} \setminus \bigcup_{i=1}^d \mathcal{R}_i$ is path-connected by Theorem~\ref{thm:removing a semialgebraicsubset}. In particular, there is a curve in $\mathcal{U} \setminus \bigcup_{i=1}^d \mathcal{R}_i \subseteq X_{r_1,\dots, r_d}$ connecting $(U_1,\dots, U_d,A)$ and $(U_1,\dots, U_d,B)$, completing the proof in this case.

Finally, if $r_1 = \prod_{i=2}^d r_i = n_1$, we consider the map
\[
f: X_{r_1,\dots,r_d} \to \mathbb{R}, \quad f(A) = \det (\flat_1(A)).
\]
We see that $X_{r_1,\dots, r_d}$ is a disjoint union of the preimages $f^{-1}(0,\infty)$ and $f^{-1}(-\infty,0)$. It is straightforward --- by an argument similar to the case $r_1 = \prod_{i=2}^d r_i < n_1$ --- to show that both $f^{-1}(0,\infty)$ and $f^{-1}(-\infty,0)$ are connected. Hence $X_{r_1,\dots, r_d}$ has two connected components in this case.
\end{proof}
As multilinear rank must necessarily satisfy \eqref{eq:necc}, the three cases in Theorem~\ref{thm:multilinrkrealconn2}  cover all possibilities.

For the case $\FF = \CC$,  it follows from \eqref{eq:necc} that the \emph{real} codimension in \eqref{eq:codimsmlmulrk} is always at least two, and we easily obtain the following for complex tensors.
\begin{theorem}[Path-connectedness of multilinear rank over $\CC$]\label{thm:multilncomplxrkconn}
Let $W_1,\dots,W_d$ be complex vector spaces. The set   of multilinear rank-$(r_1,\dots, r_d)$ complex tensors
\[
\{ A \in W_1\otimes\dots\otimes W_d : \mrank(A) = (r_1,\dots,r_d) \}
\]
is always path-connected.
\end{theorem}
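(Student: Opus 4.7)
The plan is to mimic the proof of Theorem~\ref{thm:multilinrkrealconn2} but exploit the fact that, over $\CC$, the codimension bound in \eqref{eq:codimsmlmulrk} is never violated. I would set up the obvious complex analogues of the tools used in Section~\ref{sec:connmrank}: the product of complex Grassmannians $G_{r_1,\dots,r_d} = \Gr(r_1,W_1)\times \dots \times \Gr(r_d,W_d)$, the tensor product of pullbacks of tautological bundles $\mathcal{T}_{r_1,\dots,r_d}$ whose fiber over $(U_1,\dots,U_d)$ is $U_1 \otimes \dots \otimes U_d$, and the Kempf--Weyman desingularization map $\rho_{r_1,\dots,r_d}\colon \mathcal{T}_{r_1,\dots,r_d} \to \Sub_{r_1,\dots, r_d}(W_1,\dots,W_d)$.

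Next I would introduce the open subbundle
\[
\mathcal{X}_{r_1,\dots,r_d} \coloneqq \{(U_1, \dots, U_d, C) \in \mathcal{T}_{r_1,\dots,r_d} : \rank(\flat_i(C)) = r_i, \; i=1,\dots,d\},
\]
which is isomorphic via $\rho_{r_1,\dots,r_d}$ to $X_{r_1,\dots,r_d}(W_1,\dots,W_d)$. Setting
\[
\mathcal{S}_i \coloneqq \{(U_1,\dots,U_d,C) \in \mathcal{T}_{r_1,\dots,r_d} : \rank(\flat_i(C)) \le r_i - 1\},
\]
we have $\mathcal{X}_{r_1,\dots,r_d} = \mathcal{T}_{r_1,\dots,r_d} \setminus \bigcup_{i=1}^d \mathcal{S}_i$. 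A complex-dimension count identical to the one in \eqref{eq:codimsmlmulrk} yields
\[
\codim_{\CC}\bigl(\mathcal{S}_i, \mathcal{T}_{r_1,\dots,r_d}\bigr) = \prod\nolimits_{j \ne i} r_j - r_i + 1 \ge 1,
\]
where the inequality follows from the necessary condition \eqref{eq:necc} on multilinear ranks. Hence $\codim_{\RR}(\mathcal{S}_i, \mathcal{T}_{r_1,\dots,r_d}) \ge 2$ for every $i$.

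To finish, I would observe that $\mathcal{T}_{r_1,\dots,r_d}$ is path-connected: its base $G_{r_1,\dots,r_d}$ is a product of complex Grassmannians (each path-connected) and the projection $p$ has complex linear fibers $U_1 \otimes \dots \otimes U_d$, which are contractible. Applying Theorem~\ref{thm:removing a semialgebraicsubset} to remove the semialgebraic subset $\bigcup_{i=1}^d \mathcal{S}_i$ of real codimension at least two preserves path-connectedness, so $\mathcal{X}_{r_1,\dots,r_d}$, and therefore $X_{r_1,\dots,r_d}(W_1,\dots,W_d)$, is path-connected. There is essentially no obstacle here: the only subtle point in the real proof --- the boundary case $r_i = \prod_{j\ne i} r_j = n_i$, which over $\RR$ produced two components distinguished by $\sgn \det (\flat_i(\cdot))$ --- simply disappears over $\CC$ because the complex codimension count is already sufficient, and even the "det" invariant lives in $\CC^\times$, which is connected.
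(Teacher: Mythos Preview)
Your proposal is correct and follows exactly the paper's own approach: the paper simply remarks that over $\CC$ the complex codimension in \eqref{eq:codimsmlmulrk} is at least one by \eqref{eq:necc}, hence the real codimension is at least two, and Theorem~\ref{thm:removing a semialgebraicsubset} immediately gives path-connectedness. Your write-up is in fact more detailed than the paper's one-line justification, and your closing remark about why the boundary case $r_i = \prod_{j\ne i} r_j = n_i$ evaporates over $\CC$ is a nice touch.
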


\subsection{Higher homotopy groups of multilinear rank}\label{sec:homotopygpmultilnrk}

Let $V$ be a real vector space of dimension $n$ and let $r \le n$. Theorem~\ref{thm:homlongseq} allows one to determine $\pi_k(\Gr(r, V))$ from the fiber bundle
\[
\O(r) \to \St(r, V) \to \Gr(r, V),
\]
where $\O(r)$ is the orthogonal group and $\St(r, V)$ is the \emph{Stiefel manifold} of $r$-frames in $V$. Since $\St(r, V)$ is $(n - r - 1)$-connected \cite{Hatcher00},  $\pi_k(\St(r, V)) = 0$ and thus
\begin{equation}\label{eq:homoGr}
\pi_k\bigl(\Gr(r, V)\bigr) \cong \pi_{k-1}\bigl(\O(r)\bigr)
\end{equation}
for all $k \le n - r - 1$.

We will study the homotopy groups of $X_{r_1, \dots, r_d} (V_1, \dots, V_d)$ for real vector spaces $V_1,\dots, V_d$. For nondegenerate results, we will assume that each $r_i \ge 2$. By \eqref{eq:necc}, we must have
\[
r_0 \coloneqq \min_{i=1,\dots,d}\Bigl[\Bigl(\prod\nolimits_{j \ne i} r_j \Bigr) - r_i \Bigr] \ge 0.
\]
We will impose a slight restriction that $r_0 \ge 1$. Then it follows from \eqref{eq:codimsmlmulrk} that
\[
\codim_{\RR} \Bigl(\bigcup\nolimits_{i=1}^d \mathcal{S}_i, \; \mathcal{T}_{r_1, \dots, r_d} \Bigr) = r_0 + 1 \ge 2.
\]
So by Theorem~\ref{thm:removing a semialgebraicsubset} and Theorem~\ref{thm:homlongseq}, for $k < r_0$,
\begin{align*}
\pi_k\bigl(X_{r_1, \dots, r_d}(V_1, \dots, V_d)\bigr) &\cong \pi_k(\mathcal{T}_{r_1, \dots, r_d}) \cong \pi_k\bigl(\Gr(r_1, V_1) \times \dots \times \Gr(r_d, V_d)\bigr)\\
&\cong \pi_k\bigl(\Gr(r_1, V_1)\bigr) \times \dots \times \pi_k\bigl(\Gr(r_d, V_d)\bigr),
\end{align*}
which implies that when $n_i = \dim_\RR (V_i)$ is large enough, the homotopy groups $\pi_k(X_{r_1, \dots, r_d} (V_1, \dots, V_d))$ do not depend on $V_1,\dots,V_d$, a consequence of \eqref{eq:homoGr}. 
Hence when $k \le \min \{r_0-1, n_1-r_1-1, \dots, n_d-r_d-1\}$, it follows from \eqref{eq:homoGr} that
\[
\pi_k\bigl(X_{r_1, \dots, r_d}(V_1, \dots, V_d)\bigr)  \cong \pi_{k-1}\bigl(\O(r_1)\bigr) \times \dots \times \pi_{k-1}\bigl(\O(r_d)\bigr).
\]
The required homotopy groups then follows from the Bott Periodicity Theorem \cite{Bott57, Bott59}. We will state these formally below.

We will introduce a further abbreviation for the set of multilinear rank-$(r_1,\dots, r_d)$ real tensors in \eqref{eq:Xr}. We write
\[
X_{r_1, \dots, r_d} (n_1, \dots, n_d) \coloneqq X_{r_1, \dots, r_d} (V_1, \dots, V_d)
\]
if $V_1,\dots, V_d$ are real vector spaces of dimensions $n_1,\dots,n_d$. The colimit of the sequence
\[
X_{r_1, \dots, r_d}(n_1, \dots, n_d) \subseteq X_{r_1, \dots, r_d}(n_1+1, \dots, n_d+1) \subseteq X_{r_1, \dots, r_d}(n_1+2, \dots, n_d+2) \subseteq \cdots
\]
will be denoted by $X_{r_1, \dots, r_d}(\infty)$. Note that the homotopy groups  $\pi_k\bigl(X_{r_1, \dots, r_d}(\infty)\bigr)$ also repeat periodically for small $k$ by Bott periodicity.

\begin{theorem}[Higher homotopy groups of multilinear rank over $\RR$]\label{thm:homotopygprealmultilnrk}
\begin{enumerate}[\upshape (i)]
\item For large enough $r_i < n_i$, when $0 < k \le \min \{r_0-1, n_1-r_1-1, \dots, n_d-r_d-1\}$, 
\begin{center}
\begin{tabular}{r | l l l l l l l l }
    $k \mod 8$ & $0$ & $1$ & $2$ & $3$ & $4$ & $5$ & $6$ & $7$\\ \hline 
\\[-2ex]
    $\pi_k\bigl(X_{r_1, \dots, r_d}(n_1, \dots, n_d) \bigr)$  & $\mathbb{Z}^d$ & $(\mathbb{Z}/2\mathbb{Z})^d$ & $(\mathbb{Z}/2\mathbb{Z})^d$ & $0$ & $\mathbb{Z}^d$ & $0$ & $0$ & $0$
    \end{tabular}
\end{center}

\item For large enough $r_i$, when $0 < k < r_0$, 

\begin{center}
\begin{tabular}{r | l l l l l l l l }
    $k \mod 8$ & $0$ & $1$ & $2$ & $3$ & $4$ & $5$ & $6$ & $7$\\ \hline 
\\[-2ex]
    $\pi_k\bigl(X_{r_1, \dots, r_d}(\infty)\bigr)$  & $\mathbb{Z}^d$ & $(\mathbb{Z}/2\mathbb{Z})^d$ & $(\mathbb{Z}/2\mathbb{Z})^d$ & $0$ & $\mathbb{Z}^d$ & $0$ & $0$ & $0$
    \end{tabular}
\end{center}
\end{enumerate}
\end{theorem}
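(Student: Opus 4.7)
The key reduction is already established in the paragraphs preceding the theorem: under the hypotheses $r_0 \ge 1$ and $k \le \min\{r_0 - 1, n_1 - r_1 - 1, \ldots, n_d - r_d - 1\}$, the locus $\bigcup_{i=1}^d \mathcal{S}_i$ has real codimension $r_0 + 1 \ge 2$ in the vector bundle $\mathcal{T}_{r_1, \ldots, r_d}$, so Theorem~\ref{thm:removing a semialgebraicsubset} together with the homotopy equivalence $\mathcal{T}_{r_1, \ldots, r_d} \simeq G_{r_1, \ldots, r_d}$ and the Stiefel-manifold isomorphism \eqref{eq:homoGr} yields
\[
\pi_k\bigl(X_{r_1, \ldots, r_d}(V_1, \ldots, V_d)\bigr) \cong \prod_{i=1}^d \pi_{k-1}\bigl(\O(r_i)\bigr).
\]
My plan for part~(i) is therefore to evaluate the right-hand side directly via the Bott Periodicity Theorem \cite{Bott57, Bott59}. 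Provided each $r_i$ is in the stable range for degree $k-1$, each factor $\pi_{k-1}(\O(r_i))$ equals the corresponding homotopy group of the infinite orthogonal group $\O$, whose values repeat with period $8$ as $\mathbb{Z}/2\mathbb{Z}, \mathbb{Z}/2\mathbb{Z}, 0, \mathbb{Z}, 0, 0, 0, \mathbb{Z}$ for $j \equiv 0, 1, \ldots, 7 \pmod 8$. Taking the product of $d$ such factors and re-indexing from $j$ to $k = j+1$, so that the $\mathbb{Z}$ coming from $j \equiv 7$ lands at $k \equiv 0$, produces precisely the table in part~(i).

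For part~(ii), my plan is to observe that $X_{r_1, \ldots, r_d}(\infty)$ is the directed union, along closed embeddings induced by adding a coordinate to each ambient space, of the finite-dimensional strata $X_{r_1, \ldots, r_d}(n_1+m, \ldots, n_d+m)$; since each of these is semialgebraic and hence has the homotopy type of a CW-complex, and $\pi_k$ commutes with such sequential colimits, and since by part~(i) the $m$th term already stabilizes to $\prod_{i=1}^d \pi_{k-1}(\O(r_i))$ once $m$ is large enough that the $n_i+m$ are in the stable range, the colimit takes this same constant value and reproduces the table in part~(ii).

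The only subtlety I would flag, though I do not expect it to be a real obstacle, is verifying that the two size hypotheses are mutually compatible: the codimension argument requires $k < r_0$, while Bott periodicity requires each $r_i$ to exceed a function of $k$. Both conditions are simultaneously satisfied in the ``large enough $r_i$'' regime of the statement, so no ingredient beyond the already-established isomorphism and Bott periodicity is needed.
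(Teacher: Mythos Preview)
Your proposal is correct and follows essentially the same route as the paper: the paper's argument is exactly the reduction in the paragraphs preceding the theorem to $\pi_k(X_{r_1,\dots,r_d})\cong\prod_{i=1}^d\pi_{k-1}(\O(r_i))$ via Theorem~\ref{thm:removing a semialgebraicsubset}, the homotopy equivalence with the Grassmannian product, and \eqref{eq:homoGr}, followed by reading off the values from Bott periodicity. Your treatment of part~(ii) via the sequential colimit is slightly more explicit than the paper's one-line remark, but the content is identical.
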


The same argument applies to complex tensors of multilinear rank $(r_1, \dots, r_d)$ with the unitary group $\U(r)$  in place of $\O(r)$. More precisely, let $W_1, \dots, W_d$ be complex vector spaces of complex dimensions $n_1, \dots, n_d$ respectively. We write
\[
X^{\CC}_{r_1, \dots, r_d} (n_1, \dots, n_d) \coloneqq X_{r_1, \dots, r_d} (W_1, \dots, W_d),
\]
for the set of multilinear rank-$(r_1,\dots, r_d)$ complex tensors. In addition, let $X^{\CC}_{r_1, \dots, r_d}(\infty)$ denote the colimit of the sequence
\[
X^{\CC}_{r_1, \dots, r_d}(n_1, \dots, n_d) \subseteq X^{\CC}_{r_1, \dots, r_d}(n_1+1, \dots, n_d+1) \subseteq X^{\CC}_{r_1, \dots, r_d}(n_1+2, \dots, n_d+2) \subseteq \cdots.
\]
Then when $k \le \min \{r_0-1, 2n_1-2r_1, \dots, 2n_d-2r_d\}$, 
\[
\pi_k\bigl(X^{\CC}_{r_1, \dots, r_d}(n_1, \dots, n_d)\bigr) \cong \pi_{k-1}(\U(r_1)) \times \dots \times \pi_{k-1}(\U(r_d)).
\]

\begin{theorem}[Higher homotopy groups of multilinear rank over $\CC$]\label{thm:homotopygpcomplxmultilnrk}
\begin{enumerate}[\upshape (i)]
\item  For large enough $r_i < n_i$, when $0 < k \le \min \{r_0-1, 2n_1-2r_1, \dots, 2n_d-2r_d\}$, 
\begin{center}
\begin{tabular}{r | l l l l l l l l }
    $k \mod 8$ & $0$ & $1$ & $2$ & $3$ & $4$ & $5$ & $6$ & $7$\\ \hline 
\\[-2ex]
    $\pi_k\bigl(X^{\CC}_{r_1, \dots, r_d}(n_1, \dots, n_d) \bigr)$ & $\mathbb{Z}^d$ & $0$ & $\mathbb{Z}^d$ & $0$ & $\mathbb{Z}^d$ & $0$ & $\mathbb{Z}^d$ & $0$
    \end{tabular}
\end{center}

\item For large enough $r_i$, when $0 < k < r_0$, 
\begin{center}
\begin{tabular}{r | l l l l l l l l }
    $k \mod 8$ & $0$ & $1$ & $2$ & $3$ & $4$ & $5$ & $6$ & $7$\\ \hline 
\\[-2ex]
    $\pi_k\bigl(X^{\CC}_{r_1, \dots, r_d}(\infty)\bigr)$ & $\mathbb{Z}^d$ & $0$ & $\mathbb{Z}^d$ & $0$ & $\mathbb{Z}^d$ & $0$ & $\mathbb{Z}^d$ & $0$
    \end{tabular}
\end{center}
\end{enumerate}
\end{theorem}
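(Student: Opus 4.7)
The plan is to mirror the argument for Theorem~\ref{thm:homotopygprealmultilnrk}: use the Kempf--Weyman desingularization to identify the set of multilinear rank-$(r_1,\dots,r_d)$ complex tensors with an open subset of a tautological bundle over a product of complex Grassmannians, excise a high-codimension semialgebraic subset via Theorem~\ref{thm:removing a semialgebraicsubset}, then evaluate the homotopy groups of complex Grassmannians using the complex Stiefel fibration and Bott periodicity for the unitary group.

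I would first set $G^\CC_{r_1,\dots,r_d} \coloneqq \Gr_\CC(r_1,W_1) \times \cdots \times \Gr_\CC(r_d,W_d)$, introduce the tensor product $\mathcal{T}^\CC_{r_1,\dots,r_d}$ of the pullbacks of the complex tautological bundles, and the subsets $\mathcal{S}^\CC_i \subseteq \mathcal{T}^\CC_{r_1,\dots,r_d}$ cut out by the vanishing of all $r_i \times r_i$ minors of the $i$th flattening, exactly as in the proof of Theorem~\ref{thm:multilinrkrealconn2}. The Kempf--Weyman map then identifies $X^\CC_{r_1,\dots,r_d}(n_1,\dots,n_d)$ with $\mathcal{T}^\CC_{r_1,\dots,r_d} \setminus \bigcup_{i=1}^d \mathcal{S}^\CC_i$. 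The same dimension count as \eqref{eq:codimsmlmulrk} will give
$$\codim_\CC\bigl(\mathcal{S}^\CC_i, \mathcal{T}^\CC_{r_1,\dots,r_d}\bigr) = \Bigl(\prod\nolimits_{j \ne i} r_j\Bigr) - r_i + 1,$$
so the real codimension of $\bigcup_i \mathcal{S}^\CC_i$ is at least $2(r_0+1)$. Applying Theorem~\ref{thm:removing a semialgebraicsubset} and deformation retracting $\mathcal{T}^\CC_{r_1,\dots,r_d}$ onto its base then yields
$$\pi_k\bigl(X^\CC_{r_1,\dots,r_d}(n_1,\dots,n_d)\bigr) \cong \prod_{i=1}^d \pi_k\bigl(\Gr_\CC(r_i, W_i)\bigr)$$
for all $k$ in the stated range.

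To evaluate each factor I would feed the complex Stiefel fibration $\U(r_i) \to \St_\CC(r_i, W_i) \to \Gr_\CC(r_i, W_i)$ into Theorem~\ref{thm:homlongseq}. Because $\St_\CC(r, W)$ is $2(n-r)$-connected, the long exact sequence collapses to $\pi_k(\Gr_\CC(r_i, W_i)) \cong \pi_{k-1}(\U(r_i))$ whenever $k \le 2(n_i - r_i)$, which is precisely the source of the secondary bound $k \le 2n_i - 2r_i$ in the statement. Complex Bott periodicity \cite{Bott57,Bott59} will then supply the stable values $\pi_{2j}(\U) = 0$ and $\pi_{2j+1}(\U) = \mathbb{Z}$, valid once $r_i$ is large enough to be in the stable range, and shifting indices down by one produces the two-periodic table in part (i). Part (ii) will follow by passing to the colimit, under which the Stiefel bound $k \le 2(n_i-r_i)$ becomes vacuous and only the codimension bound $k < r_0$ survives. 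Since every step transports verbatim from the real case and complex Bott periodicity is classical, I do not anticipate a genuine obstacle; the only substantive departure from Theorem~\ref{thm:homotopygprealmultilnrk} is replacing $\O$ by $\U$ and using its period-$2$ homotopy, which accounts for the sparser pattern in the complex table.
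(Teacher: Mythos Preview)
Your proposal is correct and follows essentially the same approach as the paper, which simply states that ``the same argument applies to complex tensors of multilinear rank $(r_1,\dots,r_d)$ with the unitary group $\U(r)$ in place of $\O(r)$'' and then records the resulting isomorphism $\pi_k\bigl(X^{\CC}_{r_1,\dots,r_d}(n_1,\dots,n_d)\bigr) \cong \pi_{k-1}(\U(r_1)) \times \cdots \times \pi_{k-1}(\U(r_d))$ for $k$ in the stated range. Your write-up merely spells out in detail what the paper leaves implicit; the only minor remark is that your codimension computation actually shows the excision step holds up to $k \le 2r_0$, so the bound $k \le r_0-1$ in the statement is not sharp from that step alone, but this does not affect correctness.
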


\section{Topology of symmetric multilinear rank}

It is easy to see that for a symmetric tensor $A \in \mathsf{S}^d(V) \subseteq V^{\otimes d}$, its multilinear rank $(r_1, \dots, r_d)$ must satisfy $r_1 = \dots =r_d$. We may therefore define a corresponding notion of symmetric subspace variety and symmetric multilinear rank.
\begin{definition}\label{def:ssub}
Let $V$ be a vector  space over $\FF = \RR$ or $\CC$ of dimension $n$. Let $r \le n$ be a positive integer.  The \emph{symmetric subspace variety} is the set
\[
\Sub_{r}(V) \coloneqq \{A \in \mathsf{S}^d(V) :  A \in \mathsf{S}^d(U), \; U\subseteq V,\; \dim(U)=r\}.
\]
We say that $A\in  \mathsf{S}^d(V)$ has \emph{symmetric multilinear rank} $r$, or, in notation,
\[
\smrank(A) = r,
\]
if whenever $A \in \Sub_{s}(V)$, we must have $r=s$. In other words $\Sub_{r}(V)$ is the smallest symmetric subspace variety that contains $A$.
\end{definition}
Again, by definition, we have
Clearly, the definition implies that
\[
\Sub_{r}(V)  =  \{A \in \mathsf{S}^d(V) : \smrank(A) \le r \},
\]
although we would be more interested in the set of all tensors of multilinear rank \emph{exactly} $r$, which we will denote by
\begin{equation}\label{eq:Yr}
Y_{r}(V)  \coloneqq   \{A \in \mathsf{S}^d(V) : \smrank(A) = r \}.
\end{equation}

\subsection{Path-connectedness of symmetric multilinear rank} 

We study the path-connectedness of  the set of symmetric tensors of symmetric multilinear rank $r$, i.e., $Y_r(V)$ as defined in \eqref{eq:Yr}. Here $V$ is an $n$-dimensional vector space over $\FF = \RR$ or $\CC$, and $r = 1, \dots, n$. 

Our approach in this section mirrors the one we used in Section~\ref{sec:connmrank} but is somewhat simpler this time. Let $\FF = \RR$.
We consider the vector bundle $\mathcal{Q}_r$ over $\Gr(r, V)$ defined by
\begin{equation}\label{eq:Qr}
\mathcal{Q}_r \coloneqq \{(U, A) \in \Gr(r, V) \times \mathsf{S}^d(V) \colon A \in \mathsf{S}^d(U)\}
\end{equation}
and the map
\[
\rho_r \colon \mathcal{Q}_r \to \mathsf{S}^d(V), \quad (U, A) \mapsto A.
\]
The image of $\rho_r$ is precisely $\Sub_r(V)$, the symmetric subspace variety as defined in Definition~\ref{def:ssub}.

\begin{theorem}[Path-connectedness of symmetric multilinear rank over $\RR$]\label{thm:symmultilinrkrlpcon}
Let $V$ be a real vector space of dimension $n$.
\begin{enumerate}[\upshape (i)]
\item When $r = 1$ and $d$ is odd, the set of symmetric multilinear rank-one real tensors
\[
\{A \in \mathsf{S}^d (V) : \smrank(A) = 1\}
\]
is a path-connected set.

\item When $r = 1$ and $d$ is even, the set of symmetric multilinear rank-one real tensors
\[
\{A \in \mathsf{S}^d (V) : \smrank(A) = 1\}
\]
has two connected components.

\item When $d = 2$, the set of symmetric multilinear rank-$r$ real tensors
\[
\{A \in \mathsf{S}^d (V) : \smrank(A) = r\}
\]
has $r+1$ connected components.

\item When $r \ge 2$ and $d \ge 3$, the set of symmetric multilinear rank-$r$ real tensors
\[
\{A \in \mathsf{S}^d (V) : \smrank(A) = r\}
\]
is a path-connected set.
\end{enumerate}
\end{theorem}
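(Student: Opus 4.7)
Parts (i) and (ii) reduce immediately to Proposition~\ref{prop:sr1oddeven}: for $r = 1$, the conditions $\smrank(A) = 1$ and $\srank(A) = 1$ both amount to $A = \lambda v^{\otimes d}$ for some nonzero $\lambda \in \RR$ and $v \in V$, so the two sets coincide. For part (iii), when $d = 2$ the symmetric multilinear rank of $A \in \mathsf{S}^2(V)$ equals the matrix rank of $A$, and Sylvester's law of inertia partitions the rank-$r$ locus into $r+1$ $\GL(V)$-orbits indexed by signature $(p, r-p)$, $p = 0, \dots, r$. Each orbit is connected: its stabilizer meets the component $\GL^-(V)$ of determinant-negative matrices (via a sign flip on a diagonal entry or on a kernel direction), so the orbit coincides with the image of the connected group $\GL^+(V)$ acting by congruence $g \cdot A = gAg^\tp$; distinct signatures yield disjoint components by continuity of the signature on the rank-$r$ locus.

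The main content is part (iv), which I propose to prove by mimicking the desingularization argument of Theorem~\ref{thm:multilinrkrealconn2}. Consider the vector bundle $\mathcal{Q}_r$ over $\Gr(r, V)$ from~\eqref{eq:Qr}, with projection $\rho_r \colon \mathcal{Q}_r \to \mathsf{S}^d(V)$, and set
\[
\mathcal{X}_r \coloneqq \{(U, A) \in \mathcal{Q}_r : \smrank(A) = r\}, \qquad \mathcal{S} \coloneqq \mathcal{Q}_r \setminus \mathcal{X}_r.
\]
When $\smrank(A) = r$, the minimal containing subspace $U$ is recovered uniquely from $A$ as the column space of any flattening, so $\rho_r|_{\mathcal{X}_r}$ is a homeomorphism onto $Y_r(V)$. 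In each fiber over $U \in \Gr(r,V)$, the fiber of $\mathcal{S}$ lies in $\bigcup_{U' \in \Gr(r-1, U)} \mathsf{S}^d(U')$, a semialgebraic set of real dimension at most $(r-1) + \binom{r+d-2}{d}$, so Pascal's identity yields
\[
\codim_\RR(\mathcal{S}, \mathcal{Q}_r) \geq \binom{r+d-2}{d-1} - (r-1),
\]
and a direct check confirms this is at least $2$ for all $r \geq 2$ and $d \geq 3$.

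Since the real Grassmannian $\Gr(r, V)$ is connected and the fibers of $\mathcal{Q}_r$ are vector spaces, $\mathcal{Q}_r$ itself is path-connected; Theorem~\ref{thm:removing a semialgebraicsubset} then implies that $\mathcal{X}_r = \mathcal{Q}_r \setminus \mathcal{S}$ is path-connected, and the homeomorphism $\rho_r|_{\mathcal{X}_r}$ transports this to $Y_r(V)$. The most delicate step will be verifying the codimension bound in the borderline case $(r,d) = (2,3)$, where the estimate is tight at exactly $2$; a secondary care is required to confirm that $\rho_r|_{\mathcal{X}_r}$ is a genuine homeomorphism rather than merely a continuous bijection, since path-connectedness transfers cleanly only through the former.
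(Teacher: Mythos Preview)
Your proposal is correct and, for the substantive part~(iv), follows exactly the paper's argument: the paper introduces the same bundle $\mathcal{Q}_r$, the same open subset (denoted $\mathcal{Y}_r$ there, your $\mathcal{X}_r$), computes the identical codimension $\binom{r+d-2}{d-1} - r + 1$, and invokes Theorem~\ref{thm:removing a semialgebraicsubset}. Your treatment of part~(iii) via Sylvester's law of inertia is actually more self-contained than the paper's, which simply cites Theorem~\ref{thm:real symmetric rank rank} even though that theorem's hypothesis $r < \binom{n+d-1}{d}/n$ does not cover all $r \le n$ when $d = 2$; your direct orbit argument avoids this gap. The two concerns you flag at the end are not genuine obstacles: the codimension is exactly~$2$ at $(r,d) = (2,3)$, which is all Theorem~\ref{thm:removing a semialgebraicsubset} requires for $\pi_0$, and the inverse of $\rho_r|_{\mathcal{X}_r}$ is the continuous map $A \mapsto (\im \flat_1(A), A)$, so it is a homeomorphism.
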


\begin{proof}
Note that when $r = 1$ or when $d=2$, symmetric multilinear rank and symmetric rank coincide. Since the path-connectedness of the latter has been addressed in  Proposition~\ref{prop:sr1oddeven} and Theorem~\ref{thm:real symmetric rank rank}, we will focus on the last case where $r \ge 2$ and $d \ge 3$. Let
\begin{equation}\label{eq:YrLr}
\mathcal{Y}_{r} \coloneqq \{(U, A) \in \mathcal{Q}_{r} \colon \smrank(A) = r\} \quad\text{and}\quad
\mathcal{L}_r \coloneqq \{(U, A) \in \mathcal{Q}_{r} \colon \smrank(A) < r\}.
\end{equation}
Then $\rho_{r} \colon \mathcal{Y}_{r} \to Y_{r} (V)$ is a homeomorphism and
$\mathcal{Y}_{r} = \mathcal{Q}_{r} \setminus \mathcal{L}_r$.
Observe that
\[
\dim_{\RR}(\mathcal{Q}_{r})= r(n - r) + \binom{r+d-1}{d},
\]
and
\begin{equation}
\dim_{\RR} (\mathcal{L}_r) = r(n - r) + (r - 1) + \binom{r+d-2}{d} 
= \dim_{\RR} (\mathcal{Q}_{r})- \biggl[\binom{r+d-2}{d-1} - r + 1 \biggr].\label{eq:codsymlrk}
\end{equation}
If $r \ge 2$ and $d \ge 3$, then by \eqref{eq:codsymlrk}, $\mathcal{L}_r$ has real codimension at least two in $\mathcal{Q}_{r}$. Hence, by Theorem~\ref{thm:removing a semialgebraicsubset},  $Y_{r}$ is path-connected.
\end{proof}

For the case $\FF = \CC$,  when $d \ge 3$ and $r \ge 2$, the \emph{real} codimension in \eqref{eq:codsymlrk} is always at least two. So the path-connectedness in the complex case follows easily from  Theorem~\ref{thm:cpxrkrcon}.
\begin{theorem}[Path-connectedness of symmetric multilinear rank over $\CC$]\label{thm:symultilncplxrkcon}
Let $W$ be a complex vector space. The set of symmetric multilinear rank-$r$ complex tensors
\[
\{A \in \mathsf{S}^d (W) : \smrank(A) = r\}
\]
is always path-connected.
\end{theorem}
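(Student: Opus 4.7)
The plan is to mirror the proof of Theorem~\ref{thm:symmultilinrkrlpcon} in the complex-analytic setting. First I would introduce, exactly as in \eqref{eq:Qr}, the holomorphic vector bundle $\mathcal{Q}_r$ over the complex Grassmannian $\Gr(r,W)$ whose fiber over $U\in\Gr(r,W)$ is $\mathsf{S}^d(U)$, together with the analogues of $\mathcal{Y}_r$ and $\mathcal{L}_r$ from \eqref{eq:YrLr}. The map $\rho_r \colon \mathcal{Y}_r \to Y_r(W)$ remains a homeomorphism, so it suffices to show that $\mathcal{Y}_r = \mathcal{Q}_r\setminus\mathcal{L}_r$ is path-connected.

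The second step is to check connectedness of the ambient $\mathcal{Q}_r$. Because $\Gr(r,W)$ is a complex irreducible projective variety it is path-connected, and each fiber $\mathsf{S}^d(U)\cong\CC^{\binom{r+d-1}{d}}$ is contractible, so $\mathcal{Q}_r$ is path-connected.

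Third, I would re-run the dimension count \eqref{eq:codsymlrk}, now interpreting the dimensions as complex ones. A point of $\mathcal{L}_r$ over $U$ is a symmetric tensor lying in $\mathsf{S}^d(U')$ for some codimension-one $U'\subseteq U$, which contributes $r-1$ complex parameters for $U'$ and $\binom{r+d-2}{d}$ for the tensor itself. Thus
\[
\codim_{\CC}\bigl(\mathcal{L}_r,\mathcal{Q}_r\bigr) \;=\; \binom{r+d-2}{d-1} - r + 1 \;\ge\; 1
\]
for all $r\ge 1$ and all $d\ge 2$, as a direct check of the extreme cases shows. Hence the real codimension of $\mathcal{L}_r$ in $\mathcal{Q}_r$ is at least $2$.

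Finally, since $\mathcal{L}_r$ is a (complex algebraic, hence) semialgebraic subset of the smooth manifold $\mathcal{Q}_r$, Theorem~\ref{thm:removing a semialgebraicsubset} yields $\pi_0(\mathcal{Y}_r) \cong \pi_0(\mathcal{Q}_r) = 0$, proving that $Y_r(W)$ is path-connected. Unlike the real case in Theorem~\ref{thm:symmultilinrkrlpcon}, no extra components arise when $d=2$ or $r=1$: over $\CC$ the doubling of codimension automatically promotes the borderline complex codimension $1$ to real codimension $2$, which is exactly what the paragraph preceding the statement anticipates. I do not foresee any genuine obstacle; the only point that requires care is the verification that $\binom{r+d-2}{d-1}\ge r$ holds uniformly, which is immediate from $\binom{r+d-2}{d-1}\ge\binom{r}{1}=r$ whenever $d\ge 2$.
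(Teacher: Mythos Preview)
Your proposal is correct and follows essentially the same approach as the paper: pass to the bundle $\mathcal{Q}_r$, remove the semialgebraic locus $\mathcal{L}_r$, and invoke Theorem~\ref{thm:removing a semialgebraicsubset} after observing that the complex codimension \eqref{eq:codsymlrk} is at least one, hence the real codimension is at least two. Your uniform treatment of the boundary cases $r=1$ and $d=2$ via the inequality $\binom{r+d-2}{d-1}\ge r$ is in fact slightly more explicit than the paper's justification, which only spells out the case $d\ge 3$, $r\ge 2$ in the paragraph preceding the statement.
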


\subsection{Higher homotopy groups of symmetric multilinear rank}\label{sec:homtopygpsymmultilnrk}

Let $V$ be a vector space of dimension $n$ over $\FF = \RR$ or $\CC$. We will study the homotopy groups of the set $Y_{r} (V)$ of symmetric multilinear rank-$r$ tensors. We will focus on the interesting case that $d \ge 3$, $r \ge 2$, and $n \ge 2$. In this case,
\[
s_0 \coloneqq \binom{r+d-2}{d-1} - r  \ge 1,
\]
and it follows from \eqref{eq:codsymlrk} that
\[
\codim_{\RR} \bigl(\mathcal{L}_r, \; \mathcal{Q}_{r} \bigr) = s_0 + 1 \ge 2,
\]
where $\mathcal{L}_r$ and $\mathcal{Q}_r$ are as defined in \eqref{eq:YrLr} and \eqref{eq:Qr}. 
So by Theorem~\ref{thm:removing a semialgebraicsubset} and Theorem~\ref{thm:homlongseq}, for $k < s_0$,
\begin{equation}\label{eq:symmulhpyind}
\pi_k\bigl(Y_{r}(V)\bigr) \cong \pi_k(\mathcal{Q}_{r}) \cong \pi_k\bigl(\Gr(r, V)\bigr),
\end{equation}
implying that when $\dim_{\FF} (V)$ is large enough, the homotopy group $\pi_k\bigl(Y_{r} (V)\bigr)$ does not depend on $V$. As in Section~\ref{sec:homotopygpmultilnrk}, we will write
\[
Y_{r} (V) =
\begin{cases}
Y_{r} (n) &\text{if} \; V \; \text{is a real vector space of real dimension} \; n,\\
Y^{\CC}_{r} (n)  &\text{if} \; V \; \text{is a complex vector space of complex dimension} \; n,
\end{cases}
\]
The colimits of the sequences
\[
Y_{r} (n) \subseteq Y_{r} (n + 1) \subseteq Y_{r} (n + 2) \subseteq \cdots \quad \text{and}\quad
Y^{\CC}_{r} (n) \subseteq Y^{\CC}_{r} (n + 1) \subseteq Y^{\CC}_{r} (n + 2) \subseteq \cdots
\]
will be denoted by $Y_{r} (\infty)$ and $Y^{\CC}_{r} (\infty)$ respectively. As in Section~\ref{sec:homotopygpmultilnrk}, we obtain the following results from \eqref{eq:symmulhpyind} and Bott periodicity.

\begin{theorem}[Higher homotopy groups of symmetric multilinear rank over $\RR$]\label{thm:homotopygprealsymmultilnrk} \hfill
\begin{enumerate}[\upshape (i)]
\item For large enough $r < n$, when $0 < k \le \min \{s_0-1, n - r -1\}$, 
\begin{center}
\begin{tabular}{r | l l l l l l l l }
    $k \mod 8$ & $0$ & $1$ & $2$ & $3$ & $4$ & $5$ & $6$ & $7$\\ \hline 
\\[-2ex]
    $\pi_k\bigl(Y_{r} (n) \bigr)$  & $\mathbb{Z}$ & $\mathbb{Z}/2\mathbb{Z}$ & $\mathbb{Z}/2\mathbb{Z}$ & $0$ & $\mathbb{Z}$ & $0$ & $0$ & $0$
    \end{tabular}
\end{center}

\item For large enough $r$, when $0 < k < s_0$, 

\begin{center}
\begin{tabular}{r | l l l l l l l l }
    $k \mod 8$ & $0$ & $1$ & $2$ & $3$ & $4$ & $5$ & $6$ & $7$\\ \hline 
\\[-2ex]
    $\pi_k\bigl(Y_{r} (\infty)\bigr)$  & $\mathbb{Z}$ & $\mathbb{Z}/2\mathbb{Z}$ & $\mathbb{Z}/2\mathbb{Z}$ & $0$ & $\mathbb{Z}$ & $0$ & $0$ & $0$
    \end{tabular}
\end{center}
\end{enumerate}
\end{theorem}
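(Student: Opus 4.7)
The plan is to reduce the homotopy groups of $Y_r(n)$ first to those of a Grassmannian, then to those of an orthogonal group, and finally read off the answer from Bott periodicity.

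First, I would invoke the isomorphism \eqref{eq:symmulhpyind} established in the paragraph preceding the theorem: since $\mathcal{L}_r$ has real codimension $s_0+1$ in $\mathcal{Q}_r$, Theorem~\ref{thm:removing a semialgebraicsubset} gives $\pi_k(Y_r(V))\cong \pi_k(\mathcal{Q}_r)$ for $k \le s_0 - 1$. Because $\mathcal{Q}_r$ is a vector bundle over $\Gr(r,V)$, it deformation retracts onto its zero section, so $\pi_k(\mathcal{Q}_r) \cong \pi_k(\Gr(r,V))$ in the same range. Thus the problem reduces to computing $\pi_k(\Gr(r,V))$ for $k$ in the allowed range.

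Next, I would apply Theorem~\ref{thm:homlongseq} to the principal bundle $\O(r) \to \St(r,V) \to \Gr(r,V)$ mentioned at the start of Section~\ref{sec:homotopygpmultilnrk}. The Stiefel manifold $\St(r,V)$ is $(n-r-1)$-connected, so from the long exact sequence we obtain the isomorphism \eqref{eq:homoGr}, namely
\[
\pi_k\bigl(\Gr(r,V)\bigr) \cong \pi_{k-1}\bigl(\O(r)\bigr) \quad\text{for all } k \le n-r-1.
\]
Combining the two reductions yields
\[
\pi_k\bigl(Y_r(n)\bigr) \cong \pi_{k-1}\bigl(\O(r)\bigr) \quad\text{for } 0 < k \le \min\{s_0-1, n-r-1\}.
\]

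Finally, for $r$ large enough that $r \ge k+1$ (the stable range), the Bott Periodicity Theorem \cite{Bott57, Bott59} identifies $\pi_{k-1}(\O(r))$ with the stable homotopy group $\pi_{k-1}(\O)$, which is periodic with period $8$ in the values $\mathbb{Z}/2\mathbb{Z}, \mathbb{Z}/2\mathbb{Z}, 0, \mathbb{Z}, 0, 0, 0, \mathbb{Z}$ starting at $k-1 = 0$. Re-indexing in $k$ gives exactly the table in part~(i). Part~(ii) then follows upon passing to the colimit $Y_r(\infty)$: the constraint $k \le n-r-1$ disappears in the direct limit, leaving only the requirement $k < s_0$, while the identification with $\pi_{k-1}(\O)$ is preserved because the stabilization maps $\O(r) \hookrightarrow \O(r+1)$ and $\Gr(r,V)\hookrightarrow \Gr(r,V')$ are compatible with the bundle structures used above.

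The only real obstacle is bookkeeping: one must check that both codimension bounds $k \le s_0-1$ and $k \le n-r-1$, together with the stability hypothesis $r \ge k+1$ needed for Bott periodicity, are simultaneously captured by the phrase ``for large enough $r<n$'' in part~(i); this is routine since each of these lower bounds on $r$ (resp.\ $n$) grows only linearly in $k$.
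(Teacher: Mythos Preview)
Your proposal is correct and follows essentially the same route as the paper: the reduction $\pi_k(Y_r(V))\cong\pi_k(\mathcal{Q}_r)\cong\pi_k(\Gr(r,V))$ via the codimension bound and the vector-bundle structure, then the Stiefel bundle to pass to $\pi_{k-1}(\O(r))$, and finally Bott periodicity, is exactly the argument the paper gives in the discussion leading up to the theorem (with the explicit reference back to Section~\ref{sec:homotopygpmultilnrk}). The only cosmetic difference is that you justify $\pi_k(\mathcal{Q}_r)\cong\pi_k(\Gr(r,V))$ by deformation retraction onto the zero section, whereas the paper cites the long exact sequence of Theorem~\ref{thm:homlongseq} with contractible fiber; these are equivalent.
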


\begin{theorem}[Higher homotopy groups of symmetric multilinear rank over $\CC$]\label{thm:homotopygpcomplxsymmultilnrk} \hfill
\begin{enumerate}[\upshape (i)]
\item  For large enough $r < n$, when $0 < k \le \min \{s_0 - 1, 2n - 2r\}$, 
\begin{center}
\begin{tabular}{r | l l l l l l l l }
    $k \mod 8$ & $0$ & $1$ & $2$ & $3$ & $4$ & $5$ & $6$ & $7$\\ \hline 
\\[-2ex]
    $\pi_k\bigl(Y^{\CC}_{r} (n) \bigr)$ & $\mathbb{Z}$ & $0$ & $\mathbb{Z}$ & $0$ & $\mathbb{Z}$ & $0$ & $\mathbb{Z}$ & $0$
    \end{tabular}
\end{center}

\item For large enough $r_i$, when $0 < k < r_0$, 
\begin{center}
\begin{tabular}{r | l l l l l l l l }
    $k \mod 8$ & $0$ & $1$ & $2$ & $3$ & $4$ & $5$ & $6$ & $7$\\ \hline 
\\[-2ex]
    $\pi_k\bigl(Y^{\CC}_{r} (\infty)\bigr)$ & $\mathbb{Z}$ & $0$ & $\mathbb{Z}$ & $0$ & $\mathbb{Z}$ & $0$ & $\mathbb{Z}$ & $0$
    \end{tabular}
\end{center}
\end{enumerate}
\end{theorem}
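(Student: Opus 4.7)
The plan is to mimic the argument of Theorem~\ref{thm:homotopygprealsymmultilnrk} in the complex setting, swapping the real Grassmannian and orthogonal group for their complex analogues and invoking the complex version of Bott periodicity. Throughout, the bundle $\mathcal{Q}_r$ from \eqref{eq:Qr} and the closed subset $\mathcal{L}_r \subseteq \mathcal{Q}_r$ from \eqref{eq:YrLr} make sense verbatim for a complex $V$, and $\rho_r \colon \mathcal{Y}_r \to Y_r^{\CC}(V)$ remains a homeomorphism. The only change in the dimension count \eqref{eq:codsymlrk} is that the numbers there now measure the \emph{complex} dimension, so the \emph{real} codimension of $\mathcal{L}_r$ in $\mathcal{Q}_r$ is
\[
2\biggl[\binom{r+d-2}{d-1} - r + 1\biggr] = 2(s_0 + 1),
\]
which is at least $2$ whenever $s_0 \ge 0$.

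By Theorem~\ref{thm:removing a semialgebraicsubset}, the inclusion $\mathcal{Y}_r \hookrightarrow \mathcal{Q}_r$ induces isomorphisms on $\pi_k$ for $k \le 2s_0$, so in particular for the range $k \le s_0 - 1$ appearing in the statement. The projection $p\colon \mathcal{Q}_r \to \Gr(r,V)$ is a holomorphic vector bundle with contractible fibers $\mathsf{S}^d(U)$, hence $\pi_k(\mathcal{Q}_r) \cong \pi_k(\Gr(r,V))$, giving
\[
\pi_k\bigl(Y_r^{\CC}(V)\bigr) \cong \pi_k\bigl(\Gr(r,V)\bigr)
\qquad \text{for } 0 < k \le s_0 - 1.
\]

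Next I would feed $\Gr(r,V)$ into the complex Stiefel bundle $\U(r) \to \St_{\CC}(r,V) \to \Gr(r,V)$, where $\St_{\CC}(r,V)$ is the unitary $r$-frame manifold. Since $\St_{\CC}(r,V)$ is $(2(n-r))$-connected, Theorem~\ref{thm:homlongseq} collapses its long exact sequence to yield
\[
\pi_k\bigl(\Gr(r,V)\bigr) \cong \pi_{k-1}\bigl(\U(r)\bigr)
\qquad \text{for } 0 < k \le 2(n-r).
\]
Combining these isomorphisms gives the identification $\pi_k(Y_r^{\CC}(n)) \cong \pi_{k-1}(\U(r))$ in the range stated in the theorem.

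The computation of the table entries is then purely group-theoretic. For $r$ large compared to $k$, one has $\pi_{k-1}(\U(r)) \cong \pi_{k-1}(\U)$ by stability, and complex Bott periodicity \cite{Bott57, Bott59} gives $\pi_{2i}(\U) = 0$ and $\pi_{2i+1}(\U) = \mathbb{Z}$ for all $i \ge 0$, which fills in the $\mathbb{Z}, 0, \mathbb{Z}, 0, \ldots$ pattern. For part (ii), passing to the colimit $Y_r^{\CC}(\infty)$ trivially removes the upper bound $2(n-r)$ on $k$, leaving only $k < s_0$, and the same Bott-periodic table applies. There is no serious obstacle beyond taking care with the factor of two between real and complex dimensions and verifying that the single bound $s_0 - 1$ in the statement is the binding one (it is weaker than $2s_0$ from the semialgebraic excision step, so no sharper input is needed); I would also remark that the ``$r_0$'' in part (ii) of the theorem statement should read ``$s_0$,'' matching the notation of this section.
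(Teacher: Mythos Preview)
Your proposal is correct and follows essentially the same approach as the paper: the paper simply remarks that the result follows ``from \eqref{eq:symmulhpyind} and Bott periodicity'' after replacing $\O(r)$ by $\U(r)$ as in Section~\ref{sec:homotopygpmultilnrk}, and your write-up fills in exactly those steps (codimension count, excision via Theorem~\ref{thm:removing a semialgebraicsubset}, contractible-fiber reduction to $\Gr(r,V)$, Stiefel connectivity, and complex Bott periodicity). Your observation that ``$r_0$'' in part~(ii) should read ``$s_0$'' is also correct.
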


\section{Conclusion}

We view our work in this article as a first step towards unraveling the topology of the set of  fixed-rank tensors for various common notions of rank. There are still many unanswered questions, notably the higher homotopy groups of rank-$r$ tensors and symmetric rank-$r$ symmetric tensors when $r \ge 4$. However, from an applications point-of-view, the results in this article about path-connectedness and fundamental groups are relatively complete and provide full answers to questions about the feasibility of Riemannian optimization methods and homotopy continuation methods in low-rank approximations and rank decompositions of tensors. Two other aspects we left unexplored are: (i) possible connections with the very substantial body of work\footnote{See for instance \url{https://www.math.ias.edu/sp/topalgvar}.} on the topology of algebraic varieties, and (ii) more general relations between singular loci and fundamental groups, leaving room for further future work.

\subsection*{Acknowledgments}

PC's work is supported by the ERC grant no.~320594, ``DECODA,'' within the framework of the European program FP7/2007--2013. LHL and YQ are supported by DARPA D15AP00109, NSF IIS 1546413, DMS 1209136, and DMS 1057064. In addition, LHL's work is  supported by a DARPA Director's Fellowship. KY's work is supported by the Hundred Talents Program of the Chinese Academy of Sciences as well as the Thousand Talents Program. 

\bibliographystyle{abbrv}

\end{document}